\newtheoremstyle{exercise} 
  {3pt} 
  {3pt} 
  {\small\rmfamily} 
  {
} 
  {\rmfamily\scshape} 
  {.} 
  {.5em} 
  {} 
\newtheoremstyle{newplain}
  {5pt}
  {5pt}
  {\itshape}
  {}
  {\rmfamily\scshape}
  {. ---}
  {.5em}
  {}
\newtheoremstyle{newremark}
  {5pt}
  {5pt}
  {\rmfamily}
  {}
  {\rmfamily\scshape}
  {. ---}
  {.5em}
  {}
\theoremstyle{newplain}
\newtheorem*{Theorem*}{Theorem} 
\theoremstyle{newplain}
\newtheorem{Theorem}{Theorem}
\newtheorem{Lemma}[Theorem]{Lemma}
\newtheorem{Corollary}[Theorem]{Corollary}
\newtheorem{Proposition}[Theorem]{Proposition}
\newtheorem{Conjecture}[Theorem]{Conjecture}
\newtheorem{Definition}[Theorem]{Definition}
\theoremstyle{newremark}
\newtheorem{Empty}[Theorem]{}
\newtheorem{Claim}[Theorem]{Claim}
\theoremstyle{exercise}
\numberwithin{Theorem}{section}
\numberwithin{Exercise}{section}
\newcommand{\N}{\mathbb{N}} 
\newcommand{\Q}{\mathbb{Q}} 
\newcommand{\R}{\mathbb{R}} 
\newcommand{\Rn}{\R^n}
\newcommand{\ind}{\mathbbm{1}} 
\newcommand{\calA}{\mathscr{A}}
\newcommand{\calB}{\mathscr{B}}
\newcommand{\calC}{\mathscr{C}}
\newcommand{\calE}{\mathscr{E}}
\newcommand{\calF}{\mathscr{F}}
\newcommand{\calG}{\mathscr{G}}
\newcommand{\calH}{\mathscr{H}}
\newcommand{\calI}{\mathscr{I}}
\newcommand{\calL}{\mathscr{L}}
\newcommand{\calM}{\mathscr{M}}
\newcommand{\calN}{\mathscr{N}}
\newcommand{\calP}{\mathscr{P}}
\newcommand{\calS}{\mathscr{S}}
\newcommand{\calT}{\mathscr{T}}
\newcommand{\calX}{\mathscr{X}}
\newcommand{\cac}{\boldsymbol{\frak c}} 
\newcommand{\sfA}{\mathsf{A}}
\newcommand{\sfBan}{\mathsf{Ban_1}}
\newcommand{\sfBool}{\mathsf{Bool}}
\newcommand{\sfCBool}{\mathsf{CBool}}
\newcommand{\sfComptot}{\mathsf{Comp_{tot}}}
\newcommand{\sfCompextr}{\mathsf{Comp_{extr}}}
\newcommand{\sfF}{\mathsf{F}}
\newcommand{\sfL}{\mathsf{L}}
\newcommand{\sfLOC}{\mathsf{LOC}}
\newcommand{\sfMSN}{\mathsf{MSN}}
\newcommand{\sfSpec}{\mathsf{Spec}}
\newcommand{\sfnon}{\mathsf{non}}
\newcommand{\sfcov}{\mathsf{cov}}
\newcommand{\balpha}{\boldsymbol{\alpha}}
\newcommand{\bbeta}{\boldsymbol{\beta}}
\newcommand{\bkappa}{\boldsymbol{\kappa}}
\newcommand{\brho}{\boldsymbol{\rho}}
\newcommand{\bA}{\mathbf{A}}
\newcommand{\bB}{\mathbf{B}}
\newcommand{\bG}{\mathbf{G}}
\newcommand{\bL}{\mathbf{L}}
\newcommand{\bU}{\mathbf{U}}
\newcommand{\bc}{\mathbf{c}}
\newcommand{\boldf}{\mathbf{f}}
\newcommand{\bg}{\mathbf{g}}
\DeclareMathOperator{\rmBdry}{\mathrm{Bdry}} 
\DeclareMathOperator{\rmcard}{\mathrm{card}} 
\DeclareMathOperator{\rmdiam}{\mathrm{diam}} 
\DeclareMathOperator{\rmgraph}{\mathrm{graph}} 
\DeclareMathOperator{\rmideal}{\mathrm{ideal}}
\DeclareMathOperator{\rmim}{\mathrm{im}} 
\DeclareMathOperator{\rmInt}{\mathrm{Int}} 
\DeclareMathOperator{\rmLip}{\mathrm{Lip}} 
\DeclareMathOperator{\rmspan}{\mathrm{span}} 
\DeclareMathOperator{\rmSt}{\mathrm{St}}
\DeclareMathOperator{\rmTan}{\mathrm{Tan}} 
\newcommand{\hel} {
\hskip2.5pt{\vrule height7pt width.5pt depth0pt}
\hskip-.2pt\vbox{\hrule height.5pt width7pt depth0pt}
\, }
\def\XXint#1#2#3{{%
\setbox0=\hbox{$#1{#2#3}{\int}$}
\vcenter{\hbox{$#2#3$}}\kern-.5\wd0}}
\newcommand{\veps}{\varepsilon}
\newcommand{\vphi}{\varphi}
\newcommand{\oh}{\frac{1}{2}}
\renewcommand{\em}{\bf}
\renewcommand{\leq}{\leqslant}
\renewcommand{\geq}{\geqslant}
\renewcommand{\subset}{\subseteq}
\renewcommand{\supset}{\supseteq}
\newlength{\drop}
\begin{document}



\title[Undecidably semilocalizable measure spaces]{Undecidably semilocalizable metric measure spaces}

\def\curraddrname{{\itshape On leave of absence from}}

\author[Th. De Pauw]{Thierry De Pauw}
\address{School of Mathematical Sciences\\
Shanghai Key Laboratory of PMMP\\ 
East China Normal University\\
500 Dongchuang Road\\
Shanghai 200062\\
P.R. of China\\
and NYU-ECNU Institute of Mathematical Sciences at NYU Shanghai\\
3663 Zhongshan Road North\\
Shanghai 200062\\
China}
\curraddr{Universit\'e Paris Diderot\\ 
Sorbonne Universit\'e\\
CNRS\\ 
Institut de Math\'ematiques de Jussieu -- Paris Rive Gauche, IMJ-PRG\\
F-75013, Paris\\
France}
\email{thdepauw@math.ecnu.edu.cn,thierry.de-pauw@imj-prg.fr}

\keywords{Hausdorff measure, Boolean algebra, rectifiable, purely unrectifiable}

\subjclass[2010]{Primary 28A78,28A99; Secondary 28A75,28A05}

\thanks{The author was partially supported by the Science and Technology Commission of Shanghai (No. 18dz2271000).}



\begin{abstract}
We characterize measure spaces such that the canonical map $\bL_\infty \to \bL_1^*$ is surjective.
In case of $d$ dimensional Hausdorff measure on a complete separable metric space $X$ we give two equivalent conditions.
One is in terms of the order completeness of a quotient Boolean algebra associated with measurable sets and with locally null sets.
Another one is in terms of the possibility to decompose space in a certain way into sets of nonzero finite measure.
We give examples of $X$ and $d$ so that whether these conditions are met is undecidable in ZFC, including one with $d$ equals the Hausdorff dimension of $X$.
\end{abstract}

\maketitle

\tableofcontents


\section{Foreword}

Some questions pertaining to the calculus of variations would benefit from a useful description of the dual of the Banach space $\mathbf{BV}(\Rn)$ of functions of bounded variation in the sense of {\sc E. De Giorgi}.
The question occurs as Problem 7.4 in \cite{ARCATA}.
Measures belonging to this dual space have been characterized by {\sc N.G. Meyers} and {\sc W.P. Ziemer} in \cite{MEY.ZIE.77}.
A description of the other members was obtained (in a slightly different context) by {\sc F.J. Almgren} in \cite{ALM.65.CH} under the Continuum Hypothesis and the particular description was proved to be independent of Zermelo-Fraenkel axioms by the present author in \cite{DEP.98}.
Recently, following former work of {\sc R.D. Mauldin}, {\sc N. Fusco} and {\sc D. Spector} have given a more precise description under the Continuum Hypothesis, \cite{FUS.SPE.18}. 
\par 
In \cite{DEP.98} the problem is shown to be related to describing the dual of the Banach space $\bL_1(\Rn,\calH^{n-1})$ where $\calH^{n-1}$ denotes Hausdorff $n-1$ dimensional measure in $\Rn$.
Here we will restrict to the case when $n=2$ and we shall aim for results in $\mathsf{ZFC}$.
The notation $\bL_1(\Rn,\calH^{n-1})$ however is misleading as it assumes the problem to be independent of the underlying $\sigma$-algebra.
As we shall see, this is not the case.
\par 
Let $(X,\calA,\mu)$ be a measure space.
There is a natural linear retraction
\begin{equation}
\label{eq.100}
\Upsilon : \bL_\infty(X,\calA,\mu) \to \bL_1(X,\calA,\mu)^*
\end{equation}
which sends $\bg$ to $\mathbf{f} \mapsto \int_X gf d\mu$ where $g$ and $f$ represent $\bg$ and $\mathbf{f}$ respectively.
In general $\Upsilon$ does not need to be injective or surjective.
It has been understood for a long time that $\Upsilon$ is injective if and only if $(X,\calA,\mu)$ is {\it semifinite}.
This means that each $A \in \calA$ such that $\mu(A)=\infty$ admits a subset $\calA \ni B \subset A$ with $0 < \mu(B) < \infty$.
Of course every $\sigma$-finite measure space is semifinite.
Yet the dependence upon the $\sigma$-algebra under consideration already occurs in the case of interest to us.
The situation is the following.
\begin{enumerate}
\item If $X$ is a complete separable metric space and $0 < d < \infty$ then the measure space $(X,\calB(X),\calH^d)$ is semifinite. Here $\calB(X)$ denotes the $\sigma$-algebra of Borel subsets of $X$ and $\calH^d$ is the $d$ dimensional Hausdorff measure on $X$. In case $X=\Rn$ this was proved by {\sc R.O. Davies}, \cite{DAV.52} and in general by {\sc J. Howroyd}, \cite{HOW.95}.
\item According to {\sc D.H. Fremlin}, \cite[439H]{FREMLIN.IV} the measure space $(\R^2,\calA_{\calH^1},\calH^1)$ is not semifinite, where $\calA_{\calH^1}$ denotes the $\sigma$-algebra consisting of $\calH^1$ measurable subsets of $\R^2$. This is based on the existence of <<large>> universally null subsets of $[0,1]$ established by {\sc E. Grzegorek}, \cite{GRZ.81}. See also the article of {\sc O. Zindulka} \cite{ZIN.12}.
\end{enumerate}
Nonetheless, recalling our work \cite{DEP.98} it is the surjectivity of $\Upsilon$ that is relevant for the existence of a certain integral representation of members of the dual of $\mathbf{BV}(\R^2)$. 
Injectivity pertains to its uniqueness.
\par
Under the assumption that $(X,\calA,\mu)$ is semifinite, a necessary and sufficient condition for the surjectivity of $\Upsilon$ has been known for a long time.
It asks for the quotient Boolean algebra $\calA/\calN_\mu$ to be order complete, where $\calN_\mu = \calA \cap \{ N : \mu(N) = 0 \}$ is the $\sigma$-ideal of $\mu$ null sets.
Semifinite measure spaces with this property are sometimes called {\it Maharam}, \cite[211G]{FREMLIN.II}.
A stronger condition sometimes called {\it decomposable}, generalizes the idea of $\sigma$-finiteness to possibly uncountable decomposition into sets of finite measure, together with a new condition called {\it locally determined} (that measurability be determined by sets of finite measure), see \ref{61} for the definition of locally determined and \cite[211E]{FREMLIN.II} for the definition of decomposable.
If the quotient $\sigma$-algebra $\calA/\calN_\mu$ is not too big then decomposability implies Maharam according to {\sc E.J. McShane}, \cite{MCS.62} but not in general according to {\sc D.H. Fremlin}, \cite[216E]{FREMLIN.II}.
\par 
If $X$ is a Polish space and $\calB(X)$ denotes the $\sigma$-algebra consisting of its Borel subsets, and if the measure space $(X,\calB(X),\mu)$ is decomposable, then it is $\sigma$-finite. 
I learned the <<counting argument>> to prove this from {\sc D.H. Fremlin}, see \ref{55}.
In view of (1) above it shows that $(\R^2,\calB(\R^2),\calH^1)$ is not decomposable.
Since decomposability is stronger in general than the surjectivity of $\Upsilon$, we need to argue a bit more to show that $(\R^2,\calB(\R^2),\calH^1)$ is not Maharam, see below.
This observation calls for developing a criterion for the surjectivity of $\Upsilon$ without assuming that $(X,\calA,\mu)$ be semifinite in the first place.
We do this in Section 4.
Thus regarding the question whether 
\begin{equation*}
\Upsilon : \bL_\infty\left(\R^2,\calA,\calH^1\right) \to \bL_1\left(\R^2,\calA,\calH^1\right)^*
\end{equation*}
is surjective or not, the situation is the following.
\begin{enumerate}
\item[(3)] If $\calA=\calB(\R^2)$ then $\Upsilon$ is not surjective. Since $(\R^2,\calB(\R^2),\calH^1)$ is semifinite according to (1), and not $\sigma$-finite, it is not decomposable, \ref{55}. The argument of {\sc E.J. McShane}, \ref{mcshane} does not show $(\R^2,\calB(\R^2),\calH^1)$ is not Maharam (the reason being that its completion is not locally determined). However we give below a simple argument to the extent that it is not Maharam, based on Fubini's Theorem.
\item[(4)] If $\calA=\calA_{\calH^1}$ then the surjectivity of $\Upsilon$ is undecidable in $\mathsf{ZFC}$. The consistency of its surjectivity is a consequence of the Continuum Hypothesis, \ref{CH.implies.ad}, \ref{ad.implies.semiloc} and \ref{Riesz}. The consistency of it not being surjective was first noted in \cite{DEP.98} although in a slightly different disguise. The idea is explained below.
\end{enumerate}
\par 
The present paper grew out of the attempt to adapt the techniques used to prove (3) and (4) to the case where $\R^2$ is replaced with a small compact subset $X \subset \R^2$ -- as small as it can possibly be, i.e. of Hausdorff dimension 1 (of course not of $\sigma$-finite $\calH^1$ measure, for in that case $(X,\calB(X),\calH^1)$ and $(X,\calA_{\calH^1},\calH^1)$ are both Maharam and $\Upsilon$ is surjective, \ref{sigmafin}).
Why however would the answer depend on the $\sigma$-algebra under consideration?
In order to understand this, let us try to prove that $\Upsilon$ is surjective.
\par 
We know from the classical Riesz' Theorem that $\Upsilon$ is surjective whenever $(X,\calA,\mu)$ is a finite measure space.
This suggests to consider $\calA^f_\mu = \calA \cap \{ A : \mu(A) < \infty \}$ and for each $A \in \calA^f_\mu$ the map
\begin{equation*}
\Upsilon^A : \bL_\infty(A,\calA_A,\mu_A) \to \bL_1(A,\calA_A,\mu_A)^*
\end{equation*}
where $(A,\calA_A,\mu_A)$ is the obvious measure subspace.
Thus $\Upsilon^A$ is an isometric linear isomorphism and given $\alpha \in  \bL_1(X,\calA,\mu)^*$ there exist $g_A \in \bg_A \in \bL_\infty(A,\calA_A,\mu_A)$ such that
\begin{equation*}
(\alpha \circ \iota_A)(\mathbf{f}) = \int_A g_Af d\mu_A
\end{equation*}
whenever $f \in \mathbf{f} \in \bL_1(A,\calA_A,\mu_A)$, where $\iota_A : \bL_1(A,\calA_A,\mu_A) \to \bL_1(X,\calA,\mu)$ is the obvious embedding.
From the $\mu_A$ almost everywhere uniqueness of the Radon-Nikod\'ym derivative $g_A$ we infer that if $A,A' \in \calA^f_\mu$ then $\mu ( A \cap A' \cap \{ g_A \neq g_{A'} \}) = 0 $.
Thus $(g_A)_{A \in \calA^f_\mu}$ is what we call, from now on a {\it compatible family} of locally defined measurable functions and the question is whether it corresponds to a globally defined measurable function, i.e. whether there exists an $\calA$-measurable $g : X \to \R$ such that $\mu(A \cap \{ g \neq g_A \}) = 0$ for every $A \in \calA^f_\mu$.
If such $g$ exists let us call it a {\it gluing} of the compatible family $(g_A)_{A \in \calA^f_\mu}$.
%
%
\par 
It turns out to be rather useful to notice that the question whether a gluing exists or not can be asked in a slightly more general setting since it depends on the measure $\mu$ only insofar as its $\mu$ null sets are involved.
Thus a {\it measurable space with negligibles} $(X,\calA,\calN)$ consists of a measurable space $(X,\calA)$ and a $\sigma$-ideal $\calN \subset \calA$.
Given any $\calE \subset \calA$ one can readily define the notion of a compatible family $(g_E)_{E \in \calE}$ by asking that $E \cap E' \cap \{ g_E \neq g_{E'} \} \in \calN$ whenever $E,E' \in \calE$, and by saying that an $\calA$-measurable function $g : X \to \R$ is a gluing of $(g_E)_{E \in \calE}$ provided $E \cap \{ g \neq g_E \} \in  \calN$ for all $E \in \calE$.
One then shows, \ref{gluing} that each compatible family admits a gluing if and only if each $\calE \subset \calA$ admits an $\calN$ essential supremum $A \in \calA$.
This means that 
\begin{enumerate}
\item[(i)] For every $E \in \calE$ one has $E \setminus A \in \calN$;
\item[(ii)] For every $B \in \calA$, if $E \setminus B \in \calN$ whenever $E \in \calE$, then $A \setminus B \in \calN$. 
\end{enumerate}
We say that a measurable space with negligibles is {\it localizable} if it has this property.
\par 
In this paper we characterize those measure spaces such that $\Upsilon$ is surjective, \ref{Riesz}.
To state this we first define
\begin{equation*}
\calN_\mu\left[\calA^f_\mu\right] = \calA \cap \left\{ N : \mu(A \cap N) = 0 \text{ for all } A \in \calA^f_\mu \right\} \,.
\end{equation*}
It is a $\sigma$-ideal, whose members one is tempted to call {\it locally $\mu$ null}.
\begin{Theorem*}
For any measure space $(X,\calA,\mu)$, the map $\Upsilon$ (recall \eqref{eq.100}) is surjective if and only if the measurable space with negligibles $\left(X,\calA,\calN_\mu\left[\calA^f_\mu\right] \right)$ is localizable.
\end{Theorem*}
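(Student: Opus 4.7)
My plan is to follow the sketch already outlined in the excerpt: Riesz' theorem applied inside each subspace $(A,\calA_A,\mu_A)$ with $A\in\calA^f_\mu$ turns any $\alpha\in\bL_1^*$ into a compatible family $(g_A)_{A\in\calA^f_\mu}$, and $\alpha$ lies in the image of $\Upsilon$ precisely when this family admits a gluing. I will therefore reduce the theorem to the assertion that every compatible family indexed by $\calA^f_\mu$ admits a gluing if and only if $(X,\calA,\calN_\mu[\calA^f_\mu])$ is localizable, and invoke \ref{gluing}. A preliminary observation is that gluings with respect to $\calN_\mu$ and with respect to $\calN_\mu[\calA^f_\mu]$ coincide when the index family is $\calA^f_\mu$: the inclusion $\calN_\mu\subset\calN_\mu[\calA^f_\mu]$ gives one direction, and testing $A\cap\{g\neq g_A\}\in\calN_\mu[\calA^f_\mu]$ against $A$ itself gives the other.

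For the direction $(\Leftarrow)$, assume localizability and take $\alpha\in\bL_1^*$. Classical Riesz on each $(A,\calA_A,\mu_A)$ supplies $g_A\in\bL_\infty(A)$ with $\|g_A\|_\infty\leq\|\alpha\|$ representing $\alpha\circ\iota_A$; Radon--Nikod\'ym uniqueness gives compatibility, and \ref{gluing} produces a measurable gluing $g:X\to\R$. The uniform bound propagates, since $\{|g|>\|\alpha\|\}$ has $\mu$-null intersection with every $A\in\calA^f_\mu$, so it lies in $\calN_\mu[\calA^f_\mu]$; truncating $g$ on this locally null set places it in $\bL_\infty$. To verify $\Upsilon(\bg)=\alpha$, I use Chebyshev to write $\{f\neq 0\}=\bigsqcup_n A_n$ with $A_n\in\calA^f_\mu$ disjoint and compute
\begin{equation*}
\int_X g f\,d\mu \,=\, \sum_n \int_{A_n} g_{A_n} f\,d\mu \,=\, \sum_n \alpha(\iota_{A_n}(\mathbf{f}\chi_{A_n})) \,=\, \alpha(\mathbf{f})
\end{equation*}
by dominated convergence and continuity of $\alpha$.

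For the direction $(\Rightarrow)$, assume $\Upsilon$ surjective and fix $\calE\subset\calA$. Since $(A,\calA_A,\mu_A)$ is a finite measure space, the classical exhaustion argument (maximising $\mu(\bigcup_n(A\cap E_n))$ over countable subcollections of $\calE$) produces an essential supremum $S_A\subset A$ of $\{A\cap E:E\in\calE\}$; by uniqueness up to $\mu_A$-null sets the family $(g_A:=\chi_{S_A})_{A\in\calA^f_\mu}$ is compatible. Define $\alpha:\bL_1\to\R$ by $\alpha(\mathbf{f}):=\sum_n\int_{A_n}g_{A_n}f\,d\mu$ for any disjoint decomposition $\{f\neq 0\}=\bigsqcup_n A_n$ with $A_n\in\calA^f_\mu$; compatibility (tested on a common refinement of two decompositions) makes $\alpha$ well-defined, and $|\alpha(\mathbf{f})|\leq\|\mathbf{f}\|_1$ makes it continuous. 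Surjectivity of $\Upsilon$ yields $g\in\bL_\infty$ representing $\alpha$; testing $\alpha(\chi_{B'})=\int_X g\chi_{B'}\,d\mu$ against arbitrary measurable $B'\subset B\in\calA^f_\mu$ and using $\mu(S_{B'})=\mu(B'\cap S_B)$ (both sides being essential suprema in $B'$) forces $g=\chi_{S_B}$ $\mu$-a.e.\ on $B$. Hence $g$ is a gluing, $\{g\notin\{0,1\}\}\in\calN_\mu[\calA^f_\mu]$, and I claim $T:=\{g=1\}$ is the essential supremum of $\calE$: for $E\in\calE$ and $A\in\calA^f_\mu$, $A\cap T=S_A$ $\mu$-a.e., and $(A\cap E)\setminus S_A$ is $\mu_A$-null, so $E\setminus T\in\calN_\mu[\calA^f_\mu]$; and if $E\setminus B\in\calN_\mu[\calA^f_\mu]$ for all $E\in\calE$, then each $(A\cap E)\setminus B$ is $\mu_A$-null, which by the essential-supremum property of $S_A$ in $A$ forces $S_A\setminus B$ to be $\mu_A$-null, i.e., $T\setminus B\in\calN_\mu[\calA^f_\mu]$.

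The main obstacle I anticipate lies in the forward direction, specifically in engineering a functional whose Riesz representative can be identified with the indicator of a globally defined essential supremum: one must verify that the auxiliary $\alpha$ is well-defined independently of the $\sigma$-finite decomposition of $\{f\neq 0\}$, and that testing the resulting $g$ against indicators of arbitrary measurable subsets of finite-measure sets pins $g$ down to $\chi_{S_B}$ on each slice. A secondary technical point, naturally explaining the appearance of $\calN_\mu[\calA^f_\mu]$ rather than $\calN_\mu$, is that the $\bL_\infty$-truncation in the reverse direction only places the bad set in $\calN_\mu[\calA^f_\mu]$.
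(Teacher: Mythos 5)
Your proof is correct. The direction $(\Leftarrow)$ is essentially the paper's argument for \ref{Riesz}: Riesz representation on each finite measure subspace, compatibility, a gluing via \ref{gluing}, truncation on the locally null set $\{|g|>\|\alpha\|\}$, and verification by exhausting $\{f\neq 0\}$ by sets of finite measure; your preliminary remark that compatibility and gluing relative to $\calN_\mu$ and to $\calN_\mu[\calA^f_\mu]$ coincide for families indexed by $\calA^f_\mu$ is exactly the (implicit) justification the paper uses when it intersects the exceptional sets with the $A_n$ themselves. In the direction $(\Rightarrow)$ you follow the same overall strategy --- manufacture a bounded linear functional on $\bL_1$ from $\calE$, apply surjectivity, and read the essential supremum off the representing function --- but you build the functional differently. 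The paper replaces $\calE$ by $\rmideal(\calE)$ and sets $\alpha(f)=\sup_{E\in\calE}\int_E f\,d\mu$ for $f\geq 0$, deriving additivity from the directedness of the ideal, and then shows directly that $\{g\neq 0\}$ is an $\calN_\mu[\calA^f_\mu]$ essential supremum through several integral estimates. You instead first produce local essential suprema $S_A$ inside each finite measure subspace by the exhaustion argument of \ref{sigmafinite.localizable}, define $\alpha(\boldf)=\sum_n\int_{A_n}\ind_{S_{A_n}}f\,d\mu$ over a countable decomposition of $\{f\neq 0\}$ into sets of finite measure, and identify the representative as $\ind_{S_B}$ on each $B\in\calA^f_\mu$, so that $\{g=1\}$ is the desired essential supremum. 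Your route makes the compatible-family/gluing picture of the introduction explicit in both directions and the final identification is arguably more transparent, at the price of a well-definedness check for $\alpha$, which you correctly reduce to the fact that $S_B\cap B'$ and $S_{B'}$ are both essential suprema of $\{B'\cap E : E\in\calE\}$ in $B'$ for $B'\subset B$; the paper's sup-functional avoids both the local essential suprema and that check.
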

We call a measure space {\it semilocalizable} if it has this property -- thus no semifiniteness is assumed.
We study the connection with the notion of {\it almost decomposable} measure space introduced in \cite{DEP.98}, \ref{ad.implies.semiloc} and \ref{mcshane} thereby generalizing to non semifinite measure spaces the classical theory briefly evoked above.
We call a measure space $(X,\calA,\mu)$ {\it almost decomposable} if there exists a disjointed family $\calG \subset \calA^f_\mu$ such that
\begin{equation*}
\forall A \in \calP(X) : ( \forall G \in \calG : A \cap G \in \calA) \Rightarrow A \in \calA \,,
\end{equation*}
and
\begin{equation*}
\forall A \in \calA : \mu(A) < \infty \Rightarrow \mu(A) = \sum_{G \in \calG} \mu(A \cap G) \,.
\end{equation*}
Using an idea of {\sc E.J. McShane}, \cite{MCS.62} and the fact that there are not too many equivalence classes of measurable sets with respect to a Borel regular outer measure on a Polish space, \ref{63} we prove the following, \ref{71}.
\begin{Theorem*}
Let $X$ be a complete separable metric space and $0 < d < 1$. For the measure space $(X,\calA_{\calH^d},\calH^d)$ the following are equivalent.
\begin{enumerate}
\item The canonical map $\Upsilon$ is surjective;
\item $(X,\calA_{\calH^d},\calH^d)$ is semilocalizable;
\item $(X,\calA_{\calH^d},\calH^d)$ is almost decomposable.
\end{enumerate}
\end{Theorem*}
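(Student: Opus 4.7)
The equivalence $(1)\Leftrightarrow(2)$ is a direct specialization of the preceding Theorem$^*$ (the characterization of surjectivity of $\Upsilon$) to the measure space $(X,\calA_{\calH^d},\calH^d)$. The implication $(3)\Rightarrow(2)$ is \ref{ad.implies.semiloc}: given a disjoint family $\calG$ witnessing almost decomposability and any $\calE\subset\calA_{\calH^d}$, one constructs an essential supremum of $\calE$ piecewise inside each $G \in \calG$ --- where $\calH^d(G)<\infty$ makes classical Riesz/Radon--Nikod\'ym theory available --- and then glues the local pieces together into a global member of $\calA_{\calH^d}$ via the first defining clause of almost decomposability.

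The substantial direction is $(2)\Rightarrow(3)$, combining {\sc McShane}'s idea \cite{MCS.62} with the cardinality estimate \ref{63}. I would proceed in three steps. First, apply Zorn's lemma to extract a maximal disjoint family $\calG \subset \calA_{\calH^d}$ of sets of positive finite $\calH^d$-measure; semilocalizability yields an essential supremum $S$ of $\calG$, and maximality forces $X \setminus S \in \calN_{\calH^d}[\calA_{\calH^d}^f]$. Second, given any $A \subset X$ with $A \cap G \in \calA_{\calH^d}$ for every $G \in \calG$, let $\tilde A \in \calA_{\calH^d}$ be an essential supremum of $\{A \cap G : G \in \calG\}$ and verify that $A \triangle \tilde A$ is locally null, whence $A \in \calA_{\calH^d}$. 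Third, the additivity $\calH^d(A) = \sum_{G \in \calG}\calH^d(A \cap G)$ for $\calH^d(A)<\infty$ follows from maximality of $\calG$ together with semifiniteness of $\calH^d$ on $\calA_{\calH^d}$ in the regime $0<d<1$ (only countably many $A \cap G$ can have positive measure when $A$ has finite measure, so ordinary countable additivity applies).

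The main obstacle will be the second step. Semilocalizability supplies only an equivalence class modulo $\calN_{\calH^d}[\calA_{\calH^d}^f]$, and it is delicate to promote the abstract essential supremum $\tilde A$ to a concrete measurable representative whose trace on each $G \in \calG$ agrees with $A \cap G$ up to $\calH^d$-null sets (not merely locally null ones). This is where the counting argument \ref{63} becomes indispensable: on a Polish space carrying a Borel regular outer measure, $\calA_{\calH^d}/\calN_{\calH^d}[\calA_{\calH^d}^f]$ has cardinality at most $2^{\aleph_0}$, so $\calG$ admits an indexation by an initial segment of the continuum along which {\sc McShane}'s transfinite recursion converts the semilocalizability data into a coherent measurable realization. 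The hypothesis $0<d<1$ --- which keeps us clear of the pathological regime exemplified by {\sc Fremlin}'s $(\R^2,\calA_{\calH^1},\calH^1)$ --- is precisely what secures the semifiniteness needed to initiate the Zorn construction and to close the additivity identity.
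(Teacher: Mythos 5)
Your reductions of (1)$\Leftrightarrow$(2) to \ref{Riesz} and of (3)$\Rightarrow$(2) to \ref{ad.implies.semiloc} match the paper, and you correctly identify that (2)$\Rightarrow$(3) rests on McShane's idea plus the cardinality bound \ref{63}. But your execution of (2)$\Rightarrow$(3) has a genuine gap at its very first step. A maximal \emph{genuinely disjoint} family $\calG$ of sets of positive finite measure does \emph{not} enjoy the saturation property your Steps 1 and 3 rely on (namely: every $A\in\calA^f\setminus\calN_{\calH^d}$ meets some $G\in\calG$ in a non-null set). If $A$ has positive finite measure and $\calH^d(A\cap G)=0$ for every $G$, you cannot adjoin $A$ (or any positive-measure subset of it) to $\calG$ to contradict maximality, because $A$ may meet uncountably many $G$'s in nonempty null sets, and the union of those traces need not be measurable, so it cannot be subtracted off. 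The same defect undermines Step 3 (``maximality plus semifiniteness'' does not yield $\calH^d\bigl(A\setminus\bigcup_{G}(A\cap G)\bigr)=0$) and Step 2 (the claim that $A\bigtriangleup\tilde A$ is locally null is neither verified nor clear, since $A$ is not yet known to be measurable and so cannot serve as an essential upper bound of $\{A\cap G : G\in\calG\}$). The paper's resolution --- which is the actual content of McShane's trick and the place where semilocalizability and \ref{63} really enter --- is to take instead a maximal family $\calE$ that is disjoint only \emph{modulo null sets} (\ref{existence.pu}; there Zorn does give saturation, because an offending $A$ can always be adjoined), then use $\rmcard\calE\leq\cac$ to choose an injection $u:\calE\to\,]0,1]$, glue the compatible family $(u(E)\ind_E)_{E\in\calE}$ via \ref{gluing}, and pass to the genuinely disjoint level sets $G_E=E\cap g^{-1}\{u(E)\}$, which retain saturation. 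The verification that such a family is an almost decomposition (\ref{416}) then uses completeness and local determination, the latter coming from the existence of measurable hulls (\ref{62}); you never invoke local determination, yet it is indispensable for clause (3) of \ref{almost.decomposition}.

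A further correction: the hypothesis $0<d<1$ is not doing what you say. The paper proves the theorem for all $0<d<\infty$, and semifiniteness plays no role in (2)$\Rightarrow$(3) --- \ref{mcshane} is designed precisely to dispense with it, and $(X,\calA_{\calH^d},\calH^d)$ need not be semifinite even for complete separable $X$ (Fremlin's example with $d=1$). What the argument actually extracts from the setting is Borel regularity of $\calH^d$ (hence measurable hulls, hence local determination) and separability of $X$ (hence magnitude less than $\cac$).
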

\par 
Let us now consider the measure space $(\R^2,\calB(\R^2),\calH^1)$ in view of the notion of semilocalizability.
We know it is not semilocalizable, (3) above, but we promised to show how this is a consequence of Fubini's Theorem.
Define the vertical sections $V_s = \{s\} \times \R$, $s \in \R$, and the horizontal sections $H_t = \R \times \{t\}$, $t \in \R$. 
Assume if possible that $A \in \calB(\R^2)$ is an $\calN_{\calH^1}\left[\calB(\R^2)^f_{\calH^1}\right]$ essential supremum of the family $(V_s)_{s \in \R}$.
It would then readily follow that
\begin{enumerate}
\item[(a)] $\calH^1(V_s \setminus A) = 0$ for every $s \in \R$;
\item[(b)] $\calH^1(H_t \cap A) = 0$ for every $t \in \R$.
\end{enumerate}
Indeed upon noticing that $V_s$ and $H_t$ have $\sigma$-finite $\calH^1$ measure, (a) is a rephrasing of (i) above and (b) follows from (ii) applied with $B=A \setminus H_t$.
Applying Fubini's Theorem twice would yield
\begin{equation*}
\calL^2(\R^2 \setminus A) = \int_\R \calH^1( V_s \setminus A) d\calL^1(s) = 0
\end{equation*}
according to (a), and
\begin{equation*}
\calL^2(\R^2 \cap A) = \int_\R \calH^1( H_t \cap A) d\calL^1(t) = 0
\end{equation*}
according to (b). 
In turn $\calL^2(\R^2)=0$, a contradiction. 
Clearly the same argument applies with $\R^2$ replaced by any Borel set $X \subset \R^2$ such that $\calL^2(X) > 0$, to showing that $(X,\calB(X),\calH^1)$ is not semilocalizable.
\par 
There are two cases when the above argument is not conclusive: 
\begin{enumerate}
\item[($\alpha$)] when $A$ is not $\calL^2$ measurable (because Fubini's Theorem does not apply);
\item[($\beta$)] when $\calL^2(X)=0$ (because no contradiction ensues).
\end{enumerate}
\par 
With regard to case ($\alpha$) indeed, when we replace the $\sigma$-algebra $\calB(\R^2)$ by the larger $\calA_{\calH^1}$ then $(\R^2,\calA_{\calH^1},\calH^1)$ is consistently semilocalizable.
This is a consequence of the Continuum Hypothesis and a much more general statement holds\footnote{I learned it from \cite[2.5.10]{GMT}. Unfortunately the presentation there does not allow for putting emphasis on the role played by the choice of a particular $\sigma$-algebra.}, \ref{CH.implies.ad}.
As noticed in \cite{DEP.98} it turns out however that $(\R^2 , \calA_{\calH^1}, \calH^1)$ is {\it also} consistently {\it not} semilocalizable.
Here is the reason why.
We assume that $A \in \calA_{\calH^1}$ is an $\calN_{\calH^1}\left[\calA_{\calH^1}^f\right]$ essential supremum of the family $(V_s)_{s \in \R}$.
For each $s \in \R$ we define $T_s = \R \cap \{ t : (s,t) \in V_s \setminus A \}$, thus $\calL^1(T_s)=0$ according to (a).
Now choose $E \subset \R$ such that $\calL^1(E) > 0$ and $E$ has least cardinal among all sets with nonzero Lebesgue measure, and let $\sfnon(\calN_{\calL^1})$ denote this cardinal.
Assume that there exists $t \in \R \setminus \cup_{s \in E} T_s$.
Then for each $s \in E$, $t \not \in T_s$, i.e. $(s,t) \in H_t \cap A$.
Therefore $\calL^1(E) = 0$ according to (b), a contradiction.
Of course we can reach this contradiction only if $\R \neq \cup_{s \in E} T_s$, which depends upon how big $E$ is.
We denote as $\sfcov(\calN_{\calL^1})$ the least cardinal of a covering of $\R$ by $\calL^1$ negligible sets. 
Thus if $\rmcard E = \sfnon(\calN_{\calL^1}) < \sfcov(\calN_{\calL^1})$ then the argument goes through.
It turns out that this strict inequality of cardinals (appearing in the so-called Cicho\'n diagram) is consistent with $\mathsf{ZFC}$, \cite[Chapter 7]{BARTOSZYNSKI.JUDAH} or \cite[552H and 552G]{FREMLIN.V.2}.
We will refer to this idea below as the <<vertical-horizontal method>>.
This argument is from \cite{DEP.98} ; I learned it from {\sc D.H. Fremlin}.
\par 
With regard to case ($\beta$) above we observe again that the Continuum Hypothesis implies that $(X,\calA_{\calH^1},\calH^1)$ is semilocalizable for any compact set $X \subset \R^2$ regardless whether it has zero $\calL^2$ measure or not, \ref{CH.implies.ad}.
The question is therefore whether $(X,\calA_{\calH^1},\calH^1)$ is semilocalizable in $\mathsf{ZFC}$ or consistently not semilocalizable.
The latter occurs when the <<vertical-horizontal method>> generalizes from $X = \R^2$ to $X$.
For instance it clearly generalizes to $X = [a,b ] \times [c,d]$ but it is not instantly obvious how to proceed if $\calL^2(X) = 0$.
Thus we ought to explain how the <<vertical-horizontal method>> described above, showing that if $\sfnon(\calN_{\calL^1}) < \sfcov(\calN_{\calL^1})$ then $\left(\R^2,\calA_{\calH^1},\calH^1\right)$ is not semilocalizable, can be adapted to the case where $\R^2$ is replaced with some suitable subset $X \subset \R^2$.
We give a rather general version below, \ref{abstract.theorem}.
First of all we make the useful observation that if $(S,\calB(S),\sigma)$ is a probability space, $S$ is Polish and $\sigma$ is diffuse, then $\sfnon(\calN_{\bar{\sigma}}) = \sfnon(\calN_{\calL^1})$ and $\sfcov(\calN_{\bar{\sigma}}) = \sfcov(\calN_{\calL^1})$.
This ensues from the Kuratowski Isomorphism Theorem, \ref{number.8}.
A careful inspection of the argument leads to the following, \ref{85}.
\begin{Theorem*}
Let $0 < d < 1$ and let $C_d \subset [0,1]$ be the standard self-similar Cantor set of Hausdorff dimension $0 < d < 1$.
Whether the measure space $(C_d \times C_d , \calA_{\calH^d}, \calH^d)$ is semilocalizable is undecidable in $\mathsf{ZFC}$.
\end{Theorem*}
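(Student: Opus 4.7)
The plan is to establish both halves of the undecidability separately: $(\alpha)$ that $(C_d \times C_d, \calA_{\calH^d}, \calH^d)$ is semilocalizable under $\mathsf{CH}$, and $(\beta)$ that it fails to be semilocalizable under the strict Cichoń-diagram inequality $\sfnon(\calN_{\calL^1}) < \sfcov(\calN_{\calL^1})$; since both hypotheses are consistent with $\mathsf{ZFC}$, undecidability follows. Half $(\alpha)$ is immediate: $C_d \times C_d$ is compact, hence complete separable metric, and $0 < d < 1$, so \ref{CH.implies.ad} yields almost decomposability under $\mathsf{CH}$ and \ref{ad.implies.semiloc} upgrades this to semilocalizability.

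Half $(\beta)$ is an adaptation of the vertical-horizontal method described in the introduction, replacing $\R^2$ with $X := C_d \times C_d$. Define $V_s := \{s\} \times C_d$ and $H_t := C_d \times \{t\}$ for $s, t \in C_d$; since $C_d$ has Hausdorff dimension exactly $d$, each section is an isometric copy of $C_d$ and has $\calH^d$ measure equal to $\calH^d(C_d) \in (0, \infty)$. Let $\sigma := \calH^d(C_d)^{-1} \calH^d \hel C_d$, a diffuse Borel probability measure on the Polish space $C_d$. By the Kuratowski Isomorphism Theorem (ref. \ref{number.8}) one has $\sfnon(\calN_\sigma) = \sfnon(\calN_{\calL^1})$ and $\sfcov(\calN_\sigma) = \sfcov(\calN_{\calL^1})$. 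Assume, toward a contradiction, that $A \in \calA_{\calH^d}$ is an $\calN_{\calH^d}\!\left[\calA_{\calH^d}^f\right]$-essential supremum of $(V_s)_{s \in C_d}$. Condition (i) then gives $\calH^d(V_s \setminus A) = 0$ for every $s$, as locally null coincides with null on a finite-measure set; condition (ii), applied with $B := A \setminus H_t$ and using that $V_s \cap H_t$ is at most a single point and therefore $\calH^d$-null since $d > 0$, gives $\calH^d(A \cap H_t) = 0$ for every $t$. Setting $T_s := \{ t \in C_d : (s,t) \notin A \}$, we have $\sigma(T_s) = 0$ for all $s$. Pick $E \subset C_d$ with $\sigma(E) > 0$ and $\rmcard E = \sfnon(\calN_{\calL^1})$. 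Under the hypothesis $\sfnon(\calN_{\calL^1}) < \sfcov(\calN_{\calL^1})$, the union $\bigcup_{s \in E} T_s$ is a proper subset of $C_d$; choose $t$ outside it, so that $E \times \{t\} \subset H_t \cap A$ and hence $\sigma(E) = 0$, a contradiction.

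The main obstacle I expect is the measure-theoretic bookkeeping on $X = C_d \times C_d$: this is an object of Hausdorff dimension $2d$ on which $\calH^d$ is not $\sigma$-finite, so one must confirm that the sections $V_s$ and $H_t$ genuinely inherit the full finite $\calH^d(C_d)$, that the dual essential-supremum conditions (i) and (ii) translate into the two sectional vanishing statements on which the counting argument hinges, and that the invariance of $\sfnon$ and $\sfcov$ under the Kuratowski isomorphism applies to the completed null ideal $\calN_\sigma$ inside $\calA_{\calH^d}$. The abstract version \ref{abstract.theorem} of the vertical-horizontal method presumably packages exactly these hypotheses, so the task reduces to verifying them for $(C_d, \sigma)$ in place of $(\R, \calL^1)$ and invoking it, the combinatorial core of the counting argument being identical to the one in the introduction.
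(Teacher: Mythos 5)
Your proposal is correct and follows essentially the same route as the paper: the consistency of semilocalizability comes from \ref{CH.implies.ad} and \ref{ad.implies.semiloc}, and the consistency of the negation is exactly the paper's proof of \ref{85}, which verifies the hypotheses of \ref{abstract.theorem} for $V_s=\{s\}\times C_d$, $H_t=C_d\times\{t\}$ with both coordinate factors carrying the normalized measure $\calH^d\hel C_d$ and the cardinal invariants transferred via \ref{number.8}. The only point to keep explicit is that the sets $T_s$ and $E$ need not be measurable, so the vanishing statements must be read for the outer measure $\bar\sigma$, $\bar\tau$ (as in \ref{abstract.theorem}), which you already flag.
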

Incidentally, constructing a certain isomorphism in the category of measurable spaces with negligibles we are able to infer the following, \ref{pue.8}.
\begin{Theorem*}
Whether the measure space $\left([0,1],\calA_{\calH^{1/2}},\calH^\oh\right)$ is semilocalizable is undecidable in $\mathsf{ZFC}$.
\end{Theorem*}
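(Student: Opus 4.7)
The plan is to transport undecidability from the product Cantor square to the interval by an isomorphism in the category of measurable spaces with negligibles, and then invoke the preceding theorem at $d=1/2$.

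First I would observe that semilocalizability of a measure space $(X,\calA,\mu)$ depends only on the associated triple $\bigl(X,\calA,\calN_\mu[\calA^f_\mu]\bigr)$, as made explicit by Theorem~\ref{Riesz}. Consequently any isomorphism of measurable spaces with negligibles -- a bijection that preserves, in both directions, the $\sigma$-algebra and the ideal of locally null sets -- transports the property.

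Next I would construct such an isomorphism $\Phi:[0,1]\setminus D \to (C_{1/2}\times C_{1/2})\setminus D'$ from the $4$-ary expansion, where $D,D'$ are the countable sets of $4$-adic endpoints on either side. Writing $x=\sum_n a_n 4^{-n}$ with $a_n\in\{0,1,2,3\}$ and splitting each digit as $a_n=2\alpha_n+\beta_n$ with $\alpha_n,\beta_n\in\{0,1\}$ identifies $[0,1]\setminus D$ with $\{0,1\}^{\N}\times\{0,1\}^{\N}$, which is the address space of $C_{1/2}\times C_{1/2}$ (whose self-similar contraction ratio is $1/4$). Under $\Phi$ an $n$-th level $4$-ary subinterval of $[0,1]$ (of diameter $4^{-n}$) is matched with an $n$-th level subcell of $C_{1/2}\times C_{1/2}$ (of Euclidean diameter at most $\sqrt 2\,4^{-n}$).

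The crucial verification is that $\Phi$ is such an isomorphism. In one direction $\Phi^{-1}$ is $2$-Lipschitz: two points sharing an $n$-th level cell but separated at level $n{+}1$ are Euclidean distance at least $\tfrac12\cdot 4^{-n}$ apart, since inside a scaled copy of $C_{1/2}\times C_{1/2}$ its four subcells sit at corners separated by half the side length, while their $\Phi^{-1}$-images lie in a common $4$-ary interval of length $4^{-n}$; hence $\Phi^{-1}$ preserves Borel, $\calH^{1/2}$-null, and $\calH^{1/2}$-finite sets. The map $\Phi$ itself is discontinuous at every $4$-adic endpoint and is \emph{not} Lipschitz -- adjacent $4$-ary intervals can be arbitrarily close while their images remain uniformly separated -- so one argues by cover comparison: replacing an arbitrary cover of $A\subset [0,1]$ by intervals $I_k$ of diameter $r_k$ with a cover by $4$-ary intervals of diameter $4^{-n_k}\leq r_k< 4^{-n_k+1}$ (at most five per $I_k$), and taking $\Phi$-images, yields cells of diameter at most $\sqrt 2\,r_k$. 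Summing produces $\calH^{1/2}(\Phi(A))\leq C\,\calH^{1/2}(A)$ for a universal $C$, so $\Phi$ likewise preserves $\calH^{1/2}$-finite and $\calH^{1/2}$-null sets.

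This cover-comparison step is the main obstacle: reconciling the discontinuity of $\Phi$ on the dense set of $4$-adic endpoints with the metric-based definition of $\calH^{1/2}$ requires that every Hausdorff-efficient cover of $A$ be reorganised into a $4$-ary one of comparable cost. Once the isomorphism is in hand, the theorem follows immediately from Theorem~\ref{85} applied at $d=1/2$: since whether $\bigl(C_{1/2}\times C_{1/2},\calA_{\calH^{1/2}},\calH^{1/2}\bigr)$ is semilocalizable is undecidable in $\mathsf{ZFC}$, so is whether $\bigl([0,1],\calA_{\calH^{1/2}},\calH^\oh\bigr)$ is.
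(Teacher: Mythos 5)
Your coding map $\Phi$ is exactly the inverse of the Borel isomorphism $f$ of \ref{pue.4} specialized to $d=\tfrac12$ (contraction ratio $\lambda=\tfrac14$), and your two metric estimates (the $2$-Lipschitz bound for $\Phi^{-1}$ and the cover-comparison bound for $\Phi$) are the two halves of \ref{pue.6}. Up to that point you are reproducing the paper's argument. The problem is the step ``once the isomorphism is in hand.'' What you have actually shown is that the outer measures $\calH^{1/2}$ on $[0,1]$ and $Z\mapsto\calH^{1/2}(\Phi^{-1}{}^{-1}(Z))$ are comparable, hence have the same null sets and the same sets of finite measure. An isomorphism in the category of measurable spaces with negligibles requires more: you need $\Phi$ to carry the Caratheodory $\sigma$-algebra $\calA_{\calH^{1/2}}$ of $[0,1]$ \emph{onto} the Caratheodory $\sigma$-algebra $\calA_{\calH^{1/2}}$ of $C_{1/2}\times C_{1/2}$. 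Comparability of two outer measures does not in general imply equality of their Caratheodory $\sigma$-algebras (take $\psi(A)=\phi(A\cap E)+\phi(A\setminus E)$ for a non-$\phi$-measurable $E$: then $\phi\leq\psi\leq 2\phi$ yet $E\in\calA_\psi\setminus\calA_\phi$), so ``preserves null and finite sets'' is not a proof of ``preserves measurable sets.'' This identification of $\sigma$-algebras is precisely what the paper singles out as an open question in \ref{CR.5}(Q5), and the proof of \ref{pue.8} is structured so as not to need it.

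The paper's workaround is worth noting, because it is what you should do instead of invoking \ref{85} as a black box. Rather than pushing $\calA_{\calH^{1/2}}$ forward and claiming it equals $\calA_{\calH^d}$ on the square, the paper pulls $\calA_{\calH^{1/2}}$ and the locally-null ideal back to $X_d$ along $f$, obtaining a measurable space with negligibles $(X_d,\calA,\calN)$ that is isomorphic to $\bigl([0,1],\calA_{\calH^{1/2}},\calN_{\calH^{1/2}}[\calA^f_{\calH^{1/2}}]\bigr)$ \emph{by construction} (so \ref{last.lemma} applies), and then re-runs the verification of the hypotheses of the abstract criterion \ref{abstract.theorem} for this pulled-back structure. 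That criterion only asks that $\calA$ contain the Borel slices $V_s,H_t$ and that the slice conditions hold for members of $\calN$, which is exactly what the two-sided estimate of \ref{pue.6} delivers; it never asks $\calA$ to be the Caratheodory $\sigma$-algebra of any Hausdorff measure. If you want to keep your formulation, you must either supply a genuine proof that $\Phi$ identifies the two Caratheodory $\sigma$-algebras (for instance by exploiting Borel regularity and local determination as in \ref{62} to show that measurability is characterized by Borel sets, null sets and finite-measure sets alone --- an argument the paper does not carry out and whose conclusion it treats as unknown), or restructure the final step along the paper's lines. You should also say a word about the other half of undecidability (consistency of semilocalizability), which for $[0,1]$ follows directly from \ref{CH.implies.ad} without any transport.
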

Here the exponent $1/2$ reflects the nature of the argument, viewing the space $X$ as a product of a kind, where <<vertical>> sets $V_s$ and <<horizontal>> sets $H_t$ of the same size make sense, their intersections behaving according to some technical assumptions (see the statement of \ref{abstract.theorem}).
Note that the sets $X = C_d \times C_d$ are purely $(\calH^1,1)$ unrectifiable, \ref{pue.1}(1).
%
We next seek to apply the <<vertical-horizontal method>> to an $\calL^2$ negligible compact set $X \subset \R^2$ which is not purely $(\calH^1,1)$ unrectifiable and prove that $(X,\calA_{\calH^1},\calH^1)$ is not semilocalizable.
Let us choose $X$ as small as possible, i.e. of Hausdorff dimension 1, say $X = C \times [0,1]$ where $C \subset [0,1]$ is a Cantor set of Hausdorff dimension 0.
It is of course clear that $V_s = \{s\} \times [0,1]$, $s \in C$, can be chosen as our vertical sets, yet the choice $H_t = C \times \{t\}$, $t \in [0,1]$, will be of no use since $\calH^1(H_t)=0$ and therefore no contradiction can ensue when implementing the <<vertical-horizontal method>>.
Instead we proceed as follows to define $H_t$.
Let $\mu$ be a diffuse probability measure on $C$ and let $f(t) = \mu([0,t])$, $t \in [0,1]$, be its distribution function (this is a version of the Cantor-Vitali devil staircase for our 0 dimensional set $C$).
Consider the graph $G$ of the function $\frac{1}{2}f$ ; thus $G$ is a rectifiable curve, and intersects non $\calH^1$ trivially the set $X$, \ref{number.6}.
We then define $H_t = G + t.e_2$, $t \in [0,1/2]$, where $e_2=(0,1)$. 
It turns out that these will successfully play the role of horizontal sets, the details are in section \ref{example}.
The following subsumes \ref{number.9} and \ref{CR.1}.
\begin{Theorem*}
Assume that
\begin{enumerate}
\item $C \subset [0,1]$ is some Cantor set of Hausdorff dimension 0; 
\item $X = C \times [0,1]$;
\item $\calA$ is a $\sigma$-algebra and $\calB(X) \subset \calA \subset \calP(X)$;
\item $\calN = \calN_{\calH^1}$ or $\calN = \calN_{pu}$. 
\end{enumerate}
It follows that the measurable space with negligibles $(X,\calA,\calN)$ is consistently not localizable.
\end{Theorem*}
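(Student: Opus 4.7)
The plan is to implement the ``vertical-horizontal method'' from the introduction, with the horizontal lines of $\R^2$ replaced by translates of a rectifiable curve tailored to $C$. Fix a diffuse Borel probability measure $\mu$ on $C$ (available because $C$ is an uncountable Polish space), and let $f(s)=\mu(C\cap[0,s])$ be its distribution function on $[0,1]$. Because $f$ is monotone and bounded, the graph $G=\{(u,\oh f(u)):u\in[0,1]\}$ is a rectifiable $1$-set. Define $V_s=\{s\}\times[0,1]$ for $s\in C$ and $H_t=G+te_2$ for $t\in[0,\oh]$. The key geometric observations are that (a) each $V_s\cap H_t$ is at most the single point $(s,\oh f(s)+t)$, and (b) since $H_t$ is a rectifiable curve it can be covered $\calH^1$-almost everywhere by countably many $C^1$ curves via the structure theorem for rectifiable sets.

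Assume toward a contradiction that $(X,\calA,\calN)$ is localizable, so the family $\{V_s:s\in C\}\subset\calB(X)\subset\calA$ admits an $\calN$-essential supremum $A\in\calA$. Property (i) gives $V_s\setminus A\in\calN$ for each $s$; since $V_s$ is a $C^1$ curve, both $\calN=\calN_{\calH^1}$ and $\calN=\calN_{pu}$ force $\calH^1(V_s\setminus A)=0$. Testing property (ii) with $B=A\setminus H_t$, note that
\[
V_s\setminus B=(V_s\setminus A)\cup(V_s\cap H_t)\in\calN
\]
because the second summand is at most a singleton. Hence $A\cap H_t=A\setminus B\in\calN$, and rectifiability of $H_t$ together with observation (b) converts $\calN_{pu}$-smallness into $\calH^1$-smallness. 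Thus $\calH^1(H_t\cap A)=0$ for every $t\in[0,\oh]$.

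Now work in a model of $\mathsf{ZFC}$ satisfying $\sfnon(\calN_{\calL^1})<\sfcov(\calN_{\calL^1})$, whose consistency is invoked in the introduction. By the Kuratowski isomorphism theorem (cf.\ \ref{number.8}) these cardinals coincide with $\sfnon(\calN_\mu)$ and $\sfcov(\calN_\mu)$. Choose $E\subset C$ with $\mu^*(E)>0$ and $|E|=\sfnon(\calN_\mu)$. For each $s\in E$ the set $T_s=\{y\in[0,1]:(s,y)\in V_s\setminus A\}$ is $\calL^1$-null, hence so is $T_s'=(T_s-\oh f(s))\cap[0,\oh]$. Since $|E|<\sfcov(\calN_{\calL^1})$, we select $t_0\in[0,\oh]\setminus\bigcup_{s\in E}T_s'$; then $(s,\oh f(s)+t_0)\in H_{t_0}\cap A$ for every $s\in E$. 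Applying the $1$-Lipschitz projection $(s,y)\mapsto y$ to this subset of $H_{t_0}\cap A$ and using $\calH^1(H_{t_0}\cap A)=0$ we obtain $\calL^1(\oh f(E)+t_0)=0$, whence $f(E)$ is Lebesgue null. Because $f_{*}\mu=\calL^1\hel[0,1]$, the set $E$ is contained in a $\mu$-null Borel set $f^{-1}(B)$, forcing $\mu^{*}(E)=0$, the sought contradiction.

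The main obstacle is calibrating the two $\sigma$-ideals $\calN_{\calH^1}$ and $\calN_{pu}$ uniformly: $\calN$-smallness of $V_s\setminus A$, of $V_s\cap H_t$ and of $A\cap H_t$ must each collapse to $\calH^1$-smallness. The first two are routine because $V_s$ is $C^1$ and $V_s\cap H_t$ is a singleton, but the third forces the choice of $H_t$ as a translate of the graph of the continuous monotone function $\oh f$; only rectifiability of $H_t$ lets the structure theorem convert $\calN_{pu}$-nullity into $\calH^1$-nullity, and only the identity $f_{*}\mu=\calL^1$ lets the Lebesgue-null conclusion about $f(E)$ be promoted to a $\mu$-null conclusion about $E$ and thus produce the contradiction.
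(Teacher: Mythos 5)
Your proof is correct, and it follows the same overall architecture as the paper's own argument (Theorem \ref{number.9} via \ref{abstract.theorem}): vertical segments $V_s$, horizontal sets obtained by translating the graph of the distribution function of a diffuse measure on $C$, and the $\sfnon(\calN_{\calL^1})<\sfcov(\calN_{\calL^1})$ cardinal argument. The genuine difference lies in the key quantitative step, namely how one passes from $\calH^1(H_{t_0}\cap A)=0$ to $\bar{\mu}(E)=0$. The paper proves Lemma \ref{number.6}, $\calH^1(F(S))\geq \bar{\mu}(S)/\sqrt{2}$, by approximating $\mu$ weakly* by the absolutely continuous measures $\mu_j$ of \ref{number.2}, applying the area formula to the Lipschitz graphs $F_j$, and invoking the integral-geometric inequalities of \cite[3.2.27]{GMT}. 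You instead project onto the \emph{second} coordinate: the set $\{(s,\oh f(s)+t_0):s\in E\}$ is an $\calH^1$-null subset of $H_{t_0}\cap A$, the $1$-Lipschitz map $\pi_2$ sends it onto $\oh f(E)+t_0$, so $f(E)$ is Lebesgue null, and the identity $f_*\mu=\calL^1$ then yields $\bar{\mu}(E)\leq \calL^1(f(E))=0$ (take a Borel null hull $B\supset f(E)$ and note $E\subset f^{-1}(B)$ with $\mu(f^{-1}(B))=\calL^1(B)$). This is a substantially more elementary route to the same estimate --- indeed it gives $\calH^1(F(S))\geq \calH^1(\pi_2(F(S)))=\calL^1(f(S))\geq\bar{\mu}(S)$ for every $S$, i.e.\ Lemma \ref{number.6} with constant $1$ instead of $1/\sqrt{2}$ --- and it dispenses entirely with Lemmas \ref{number.3} and \ref{number.5} and the integral-geometric machinery. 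Two cosmetic points: $H_t=G+te_2$ should be intersected with $X$ so that $H_t\in\calB(X)\subset\calA$, and for $\calN=\calN_{pu}$ the passage from $H_t\cap A\in\calN_{pu}$ to $\calH^1(H_t\cap A)=0$ is immediate from the paper's definition of pure $(\calH^1,1)$ unrectifiability (test against the rectifiable set $M=H_t$), so the appeal to the structure theorem is not needed.
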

%
\par 

My thanks are due to {\sc David H. Fremlin}, not only for his inspiring treatise <<Measure Theory>> \cite{FREMLIN.I,FREMLIN.II,FREMLIN.III,FREMLIN.IV,FREMLIN.V.1,FREMLIN.V.2} but also for many helpful conversations.
%
%
Finally I am also grateful to {\sc ZhiQiang Wang} for his careful reading and witty comments.

\section{Preliminaries}

\begin{Empty}
A {\em measurable space} consists of a pair $(X,\calA)$ where $X$ is a set and $\calA$ is a $\sigma$-algebra of subsets of $X$. 
Whenever $(X,\calA)$ and $(Y,\calB)$ are measurable spaces and $f : X \to Y$ we say that $f$ is {\em $(\calA,\calB)$ measurable} provided $f^{-1}(B) \in \calA$ for all $A \in \calA$.
In the particular case when $Y=\R$ it is always understood that $\calB = \calB(\R)$ is the $\sigma$-algebra consisting of Borel subsets of $\R$ and we say that $f$ is {\em $\calA$ measurable} instead of $(\calA,\calB(\R))$ measurable.
We let $L_0(X,\calA)$ denote the collection of $\calA$ measurable functions $X \to \R$.
It is an algebra and a Riesz space (under the pointwise operations and partial order).
\end{Empty}

\begin{Empty}
As usual a {\em measure space} $(X,\calA,\mu)$ consists in a measurable space $(X,\calA)$ and a measure $\mu$ defined on the $\sigma$-algebra.
We let $L_1(X,\calA,\mu)$ denote the subspace of $L_0(X,\calA)$ consisting of those $f$ such that $|f|$ is $\mu$-summable.
The corresponding space of equivalence classes with respect to equality $\mu$ almost everywhere is denoted $\bL_1(X,\calA,\mu)$.
If $f \in L_1(X,\calA,\mu)$ we let $f^\bullet$ denote its equivalence class in $\bL_1(X,\calA,\mu)$.
Thus $f \in \mathbf{f} \in \bL_1(X,\calA,\mu)$ means that $f$ is an actual function representing the equivalence class $\mathbf{f}$, i.e. $f^\bullet = \mathbf{f}$.
\end{Empty}

\begin{Empty}
If $(X,\calA,\mu)$ is a measure space and $\calB \subset \calA$ is a $\sigma$-algebra, we let $\mu|_\calB$ denote the restriction of $\mu$ to $\calB$.
If $A \in \calA$ we let $(A,\calA_A,\mu_A)$ denote the measure space where $\calA_A = \calB \cap \{ B : B \subset A \}$ and $\mu_A = \mu|_{\calA_A}$.
\end{Empty}

\begin{Empty}
\label{P1}
With a measure space $(X,\calA,\mu)$ we associate an outer measure $\bar{\mu}$ on $X$ by the usual formula
\begin{equation*}
\bar{\mu}(S) = \inf \{ \mu(A) : \calA \ni A \supset S \} \,.
\end{equation*} 
\end{Empty}

\begin{Empty}
If $X$ is a set and $\phi$ an outer measure on $X$ we let $\calA_\phi$ denote the $\sigma$-algebra consisting of those subsets of $X$ which are $\phi$ measurable in the sense of Caratheodory. 
\end{Empty}

\begin{Empty}
\label{P3}
{\it 
If $(X,\calA,\mu)$ is a measure space and $\bar{\mu}$ is associated with it as in \ref{P1} then
\begin{enumerate}
\item $\calA \subset \calA_{\bar{\mu}}$;
\item For every $A \in \calA$ one has $\bar{\mu}(A) = \mu(A)$;
\item For every $S \subset X$ there exists $\calA \ni A \supset S$ such that $\bar{\mu}(S) = \mu(A)$.
\end{enumerate}
}
\par 
Let $A \in \calA$ and $S \subset X$. 
We ought to show that $\bar{\mu}(S) \geq \bar{\mu}(S \cap A) + \bar{\mu}(S \setminus A)$.
Let $\calA \ni B \supset S$ and notice that $\mu(B) = \mu(B \cap A) + \mu(B \setminus A) \geq \bar{\mu}(S \cap A) + \bar{\mu}(S \setminus A)$.
Since $B$ is arbitrary the proof of (1) is complete.
Given $A \in \calA$ and $\calA \ni B \supset A$ we clearly have $\mu(A) \leq \mu(B)$ and, since $B$ is arbitrary $\mu(A) \leq \bar{\mu}(A)$.
Letting $B=A$ proves the equality of conclusion (2) and we now turn to establishing (3).
If $\bar{\mu}(S)=\infty$ then take $A = X$.
If not choose $\calA \ni A'_n \supset S$ such that $\mu(A'_n) \leq n^{-1} + \bar{\mu}(S)$, $n \in \N^*$, let $A_n = \cap_{m \geq n} A'_m$, $A = \cap_{n \in \N^*} A_n$ and notice that $\calA \ni A \supset S$ and $\bar{\mu}(S) \leq \mu(A) = \lim_n \mu(A_n) \leq \lim_n \mu(A'_n) = \bar{\mu}(S)$.
\end{Empty}

\begin{Empty}
\label{24}
If $X$ is a Polish space and $\phi$ an outer measure on $X$ we say that $\phi$ is {\em Borel regular} if
\begin{enumerate}
\item $\calB(X) \subset \calA_\phi$, i.e. each Borel subset of $X$ is $\phi$ measurable;
\item For every $A \subset X$ there exists a Borel set $B \subset X$ such that $A \subset B$ and $\phi(A)=\phi(B)$.
\end{enumerate}
When $B$ is associated with $A$ as in (2) we call it a {\em Borel hull} of $A$.
In this case one readily checks that $\phi(B) \leq \phi(B')$ whenever $B' \in \calB(X)$ and $A \subset B'$.
\end{Empty}

\begin{Empty}
{\it 
If $X$ is a Polish space, $\phi$ is a Borel regular outer measure on $X$, and $\mu = \phi|_{\calB(X)}$, then $\bar{\mu} = \phi$.
}
\par 
This is a particular case of \ref{P3}(2).
\end{Empty}

\begin{Empty}
\label{26}
{\it 
If $(X,\calB(X),\mu)$ is a measure space where $X$ is Polish then $\bar{\mu}$ is Borel regular and $\bar{\mu}|_{\calB(X)} = \mu$.
}
\par 
This is a particular case of \ref{P3}.
\end{Empty}

\begin{Empty}
Let $X$ be a metric space and $0 < d < \infty$.
Given $0 < \delta \leq \infty$ and $A \subset X$ we define
\begin{equation*}
\calH^d_{(\delta)}(A) = \inf \left\{ \sum_{i \in I} (\rmdiam A_i)^d : A \subset \cup_{i \in I} A_i, I \text{ is at most countable, and } \rmdiam A_i \leq \delta \right\} \,.
\end{equation*}
We further let
\begin{equation*}
\calH^d(A) = \lim_{\delta \to 0^+} \calH^d_{(\delta)}(A) = \inf \left\{ \calH^d_{(\delta)}(A) : 0 < \delta \leq \infty\right\} \,.
\end{equation*}
Thus $\calH^d$ is a Borel regular outer measure on $X$.
Notice that our definition differs from that of \cite[2.10.2]{GMT} by a constant multiplicative factor.
This does not affect the results stated in the present paper, except for the specific constants in \ref{pue.6} which are of no relevance otherwise to our concerns.
\end{Empty}

\section{Measurable Spaces with Negligibles}

Most of the material in this Section is either known, or folklore or both, with the possible exception of the Definitions and Facts in \ref{localized.negligibles} and \ref{ideals} needed in the next Section.
I learned about the concept of measurable space with negligibles in \textsc{D.H. Fremlin}'s treatise on Measure Theory.
Here I call localizable a measurable space with negligibles whose quotient Boolean algebra is order complete, an important class of examples being the $\sigma$-finite measures spaces, \ref{sigmafinite.localizable}.
The main property of localizable measurable spaces with negligibles needed in the remaining part of this paper is the possibility of gluing, in a globally measurable way, the locally almost everywhere  defined measurable functions, \ref{gluing}.
We spell out the proof which is similar to the case of measure spaces, see \cite[213N]{FREMLIN.II} for one direction.
%

\begin{Empty}
\label{def.MSN}
A {\em measurable space with negligibles} consists of a triple $(X,\calA,\calN)$ where $(X,\calA)$ is a measurable space and $\calN \subset \calA$ is a $\sigma$-ideal of $\calA$.
The latter means that:
\begin{enumerate}
\item $\emptyset \in \calN$;
\item If $A \in \calA$, $B \in \calN$ and $A \subset B$ then $A \in \calN$;
\item If $(A_n)_{n \in \N}$ is a sequence in $\calN$ then $\cup_{n \in \N} A_n \in \calN$.
\end{enumerate}
Given a measure space $(X,\calA,\mu)$ we define
\begin{equation*}
\calN_\mu = \calA \cap \{ N : \mu(N) = 0 \}
\end{equation*}
so that clearly $(X,\calA,\calN_\mu)$ is a measurable space with negligibles.
Even though it seems the natural measurable space with negligibles associated with $(X,\calA,\mu)$, it is by no means the only one that will matter in this paper, see \ref{def.semilocalizable}. 
Similarly if $\phi$ is an outer measure on a set $X$ then
\begin{equation*}
\calN_\phi = \calP(X) \cap \{ N : \phi(N) = 0 \}
\end{equation*}
is a $\sigma$-ideal of $\calP(X)$.
\end{Empty}

\begin{Empty}
Given a measurable space with negligibles $(X,\calA,\calN)$ and $g \in L_0(X,\calA)$ we define
\begin{equation*}
\| g \|_\calN = \inf \{ t : X \cap \{ x : |g(x)| > t \} \in \calN \} \in [0,\infty]
\end{equation*}
and we say that $g$ is {\em $\calN$ essentially bounded} if $\|g\|_\calN < \infty$.
Letting $L_\infty(X,\calA,\calN)$ denote the collection of such functions and be equipped with the operations and partial order inherited from $L_0(X,\calA)$ one checks it is an algebra and a Riesz space.
Furthermore $\|\cdot\|_\calN$ is a seminorm defined on $L_\infty(X,\calA,\calN)$. 
One classically shows that $\|g\|_\calN = 0$ if and only $X \cap \{ g \neq 0 \} \in \calN$ and we let $\bL_\infty(X,\calA,\calN)$ be the corresponding quotient space equipped with the corresponding norm.
The following is established in exactly the same way as in the case of measure spaces.
\end{Empty}

\begin{Empty}
{\it 
Given a measurable space with negligibles $(X,\calA,\calN)$, $\bL_\infty(X,\calA,\calN)$ is both a Banach space and a Banach lattice.
}
\end{Empty}

\begin{Empty}
\label{def.stone}
Let $(X, \calA,\calN)$ be a measurable space with negligibles. 
Forgetting about the stability of $\calA$ and $\calN$ under {\it countable (rather than finite)} operations we view $\calA$ as a Boolean algebra and $\calN$ as an ideal of $\calA$. 
As such the quotient $\calA_\calN := \calA / \calN$ is a Boolean algebra as well.
\par 
Given an arbitrary Boolean algebra $\bB$ we recall that its {\em Stone representation} $\sfSpec(\bB)$ is a totally disconnected compact Hausdorff topological space of which the Boolean algebra of clopen sets is isomorphic to $\bB$, see e.g. \cite[311E and 311I]{FREMLIN.III}.
By a totally disconnected topological space we mean one whose connected subsets are all singletons ; if the space is assumed to be compact Hausdorff this is equivalent to the existence of a basis for the topology consisting of clopen (closed and open) subsets.
\end{Empty}

\begin{Proposition}
\label{LC}
Given a measurable space with negligibles $(X,\calA,\calN)$, the Banach spaces $\bL_\infty(X,\calA,\calN)$ and $C(\sfSpec(\calA_\calN))$ are isometrically isomorphic.
\end{Proposition}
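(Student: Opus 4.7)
The plan is to construct the isometry on simple functions first, check it has the right properties there, and then extend by density. Write $\bB = \calA_\calN$ and let $\Phi : \bB \to \sfSpec(\bB)$'s clopen algebra be the Stone isomorphism, so that each $[A] \in \bB$ corresponds to a clopen set $\widehat{A} \subset \sfSpec(\bB)$.

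First, I would define the map $T$ on representatives of classes of simple functions. If $g \in L_\infty(X,\calA,\calN)$ is a simple function written as a finite disjoint combination $g = \sum_{i=1}^n t_i \chi_{A_i}$ with $A_i \in \calA$, let
\begin{equation*}
Tg = \sum_{i=1}^n t_i \chi_{\widehat{A_i}} \in C(\sfSpec(\bB)).
\end{equation*}
Because $\widehat{\,\cdot\,}$ carries unions, intersections, and complements of elements of $\calA$ (modulo $\calN$) to the corresponding Boolean operations on clopen sets, a standard common-refinement argument shows $Tg$ depends only on the class $\bg = g^\bullet \in \bL_\infty(X,\calA,\calN)$ and that $T$ is linear on the simple part. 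The key identity to verify is the norm equality $\|Tg\|_{C(\sfSpec(\bB))} = \|g\|_\calN$ on simple functions: both are equal to $\max_i |t_i|$ taken over those $i$ for which $[A_i] \ne 0$ in $\bB$ (equivalently $\widehat{A_i} \ne \emptyset$), because a point of $\sfSpec(\bB)$ exists in $\widehat{A_i}$ iff $[A_i] \ne 0$, and $A_i \in \calN$ iff $[A_i] = 0$.

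Second, I would extend $T$ by density. Simple functions are dense in $\bL_\infty(X,\calA,\calN)$ in the $\|\cdot\|_\calN$ norm (uniform approximation of any essentially bounded measurable function by a simple function via level-set chopping, with the exceptional set absorbed into $\calN$). Likewise, finite linear combinations of indicators of clopen sets are uniformly dense in $C(\sfSpec(\bB))$: the Stone space is compact Hausdorff and its clopen sets form a basis closed under finite Boolean operations, so one partitions the range of a continuous function into small intervals to get a uniform approximation. Since $T$ is a linear isometry on the dense subspace of simple classes, it extends uniquely to an isometric embedding $T : \bL_\infty(X,\calA,\calN) \to C(\sfSpec(\bB))$.

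Finally, surjectivity follows from the fact that the image of $T$ is the closure (in sup norm) of the clopen-step functions, and this closure is all of $C(\sfSpec(\bB))$ by the density argument above. The main obstacle is really the bookkeeping at step one: one must check that the ill-defined-looking prescription $[A] \mapsto \widehat{A}$ respects the $\calN$-quotient (it does, because $A \in \calN$ iff $[A]=0$ iff $\widehat{A}=\emptyset$), and that the isometry identity on simples is exact rather than just an inequality. Once those are in place, the extension and surjectivity arguments are mechanical.
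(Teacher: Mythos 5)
Your proposal is correct and follows the same skeleton as the paper's proof: define the map on (classes of) simple functions, verify it is a linear isometry there, extend by density of simple functions in $\bL_\infty(X,\calA,\calN)$, and then show the image is dense in $C(\sfSpec(\calA_\calN))$. The verification of the isometry identity via $\|g\|_\calN = \max\{|t_i| : A_i \notin \calN\} = \max\{|t_i| : \widehat{A_i} \neq \emptyset\}$ is exactly the right computation, and your remark that well-definedness reduces to ``$A \in \calN$ iff $[A]=0$ iff $\widehat{A}=\emptyset$'' (plus a common refinement) is sound. The one point where you diverge from the paper is surjectivity: the paper observes that the image of the simple classes is a subalgebra of $C(\sfSpec(\calA_\calN))$ containing the constants and separating points, and invokes Stone--Weierstrass, whereas you argue directly that clopen step functions are uniformly dense. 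Your route is more elementary, but as written it is slightly loose: if you partition the \emph{range} of a continuous $h$ into small intervals, the preimages of those intervals need not be clopen, so you do not immediately get a clopen step approximant. The correct execution of your idea is to use the clopen basis together with compactness: cover $\sfSpec(\calA_\calN)$ by clopen sets on each of which $h$ oscillates by less than $\veps$, pass to a finite subcover, and disjointify it using the Boolean operations on clopen sets; the resulting clopen partition yields the desired uniform approximation. You name all the needed ingredients (compactness, clopen basis closed under Boolean operations), so this is a small repair rather than a gap; alternatively, Stone--Weierstrass lets you skip the repair entirely.
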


\begin{proof}
Letting $\bL_{\infty,s}(X,\calA,\calN)$ denote the linear subspace of $\bL_\infty(X,\calA,\calN)$ corresponding to those simple functions $g \in L_\infty(X,\calA,\calN)$, i.e. those having finite range, we define $\Xi : \bL_{\infty,s}(X,\calA,\calN) \to C(\sfSpec(\calA_\calN))$ by the formula
\begin{equation*}
\Xi(u^\bullet) = \sum_{y \in u(X)} y \ind_{\rmSt(u^{-1}\{y\}^\bullet)}
\end{equation*}
where $\rmSt : \calA_\calN \to \sfSpec(\calA_\calN)$ is the Stone isomorphism and the superscript bullet denotes the equivalence class.
Since each $\ind_{\rmSt(A^\bullet)}$, $A \in \calA$, is continuous, $\Xi$ is well defined.
It is easy to check that $\Xi$ is a linear isometry onto its image.
The basic Approximation Lemma of measurable functions by simple functions implies that $\bL_{\infty,s}(X,\calA,\calN)$ is dense in $\bL_\infty(X,\calA,\calN)$, therefore $\Xi$ uniquely extends to a linear isometry $\hat{\Xi} : \bL_\infty(X,\calA,\calN) \to C(\sfSpec(\calA_\calN))$.
Upon noticing that $\rmim \Xi$ is a subalgebra of $C(\sfSpec(\calA_\calN))$ that contains the constant functions and that separates points we infer from the Stone-Weierstrass Theorem that $\rmim \Xi$ is dense and in turn that $\hat{\Xi}$ is surjective. 
\end{proof}

\begin{Empty}
\label{def.ess.sup}
Let $(X,\calA,\calN)$ be a measurable space with negligibles and $\calE \subset \calA$. We say that $A \in \calA$ is an {\em $\calN$ essential supremum} of $\calE$ whenever the following holds:
\begin{enumerate}
\item For every $E \in \calE$ one has $E \setminus A \in \calN$;
\item If $B \in \calA$ is such that $E \setminus B \in \calN$ for every $E \in \calE$, then $A \setminus B \in \calN$.
\end{enumerate}
In particular if $A,A' \in \calA$ are both $\calN$ essential suprema of $\calE$ it follows that $A \ominus A' \in \calN$ where $\ominus$ denotes the symmetric difference of two sets.
If $A$ verifies condition (1) but necessarily condition (2) we call it an {\em $\calN$ essential upper bound} of $\calE$.
\end{Empty}

\begin{Empty}
\label{abstract.loc}
We say that a measurable space with negligibles $(X,\calA,\calN)$ is {\em localizable} whenever each family $\calE \subset \calA$ admits an $\calN$-essential supremum.
{\it 
Given a measurable space with negligibles $(X,\calA,\calN)$ the following conditions are equivalent:
\begin{enumerate}
\item $(X,\calA,\calN)$ is localizable;
\item The Boolean algebra $\calA_\calN$ is order complete;
\item The Stone space $\sfSpec(\calA_\calN$) is extremally disconnected;
\item The Banach lattice $C(\sfSpec(\calA_\calN))$ is order complete;
\item The Banach space $C(\sfSpec(\calA_\calN))$ is isometrically injective.
\end{enumerate}
}
That (1) be equivalent to (2) is routine verification.
If $\bB$ is a Boolean algebra then $\bB$ is order complete if and only if $\sfSpec(\bB)$ is extremally disconnected, see e.g. \cite[314S]{FREMLIN.III}.
We recall that a compact Hausdorff topological space $K$ is called extremally disconnected if the closure of any open set is open.
Furthermore a compact Hausdorff space $K$ is extremally disconnected if and only if $C(K)$ is order complete, see e.g. \cite[Problems 4.5 and 4.6]{ALBIAC.KALTON}.
This shows the equivalence between (3) and (4).
The equivalence between (4) and (5) is a consequence of the Goodner-Nachbin Theorem, see \cite[4.3.6]{ALBIAC.KALTON}.
\par 
An important class of examples of localizable spaces with negligibles is given below, with a proof for the reader's convenience.
\end{Empty}

\begin{Proposition}
\label{sigmafinite.localizable}
If $(X,\calA,\mu)$ is a $\sigma$-finite measure space then $(X,\calA,\calN_\mu)$ is localizable (recall \ref{def.MSN}).
\end{Proposition}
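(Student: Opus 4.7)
The plan is to reduce to the case of finite measure via the standard exhaustion argument. Write $X = \bigsqcup_{n \in \N} X_n$ as a disjoint union with $X_n \in \calA$ and $\mu(X_n) < \infty$. Given any family $\calE \subset \calA$, set $\calE_n = \{ E \cap X_n : E \in \calE \}$. It suffices to produce, for each $n$, an $\calN_\mu$ essential supremum $A_n \subset X_n$ of $\calE_n$, for then $A := \bigsqcup_n A_n$ will be an $\calN_\mu$ essential supremum of $\calE$ itself.

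To construct $A_n$, I would mimic the proof of the Radon--Nikod\'ym theorem. Since $\mu(X_n) < \infty$, the quantity
\begin{equation*}
s_n := \sup \left\{ \mu\Bigl( \bigcup_{k \in \N} F_k \Bigr) : (F_k)_{k \in \N} \text{ is a sequence in } \calE_n \right\}
\end{equation*}
is finite and bounded by $\mu(X_n)$. Choose sequences $(F_k^{(j)})_{k \in \N}$ in $\calE_n$, $j \in \N$, with $\mu\bigl( \bigcup_k F_k^{(j)} \bigr) \to s_n$, and set $A_n := \bigcup_{j,k} F_k^{(j)}$. Since a countable union of countable collections is countable, $A_n$ is itself of the form $\bigcup_{k} F_k$ for a sequence in $\calE_n$, so $\mu(A_n) = s_n$.

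The verification of the two conditions of \ref{def.ess.sup} is then routine. For condition (1), given any $E \in \calE$, appending $E \cap X_n$ to a sequence representing $A_n$ yields another sequence in $\calE_n$; by definition of $s_n$ we get $\mu(A_n \cup (E \cap X_n)) \leq s_n = \mu(A_n)$, whence $\mu((E \cap X_n) \setminus A_n) = 0$. Summing over $n$ gives $E \setminus A \in \calN_\mu$. For condition (2), suppose $B \in \calA$ satisfies $E \setminus B \in \calN_\mu$ for every $E \in \calE$. Then each $F_k^{(j)} \setminus B$ is $\mu$-null, so $A_n \setminus B = \bigcup_{j,k} (F_k^{(j)} \setminus B)$ is a countable union of null sets, hence null; therefore $A \setminus B = \bigsqcup_n (A_n \setminus B) \in \calN_\mu$.

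There is no real obstacle here; the only nontrivial point is recognising that the supremum $s_n$ is attained by a single countable union, which is exactly the place where $\sigma$-finiteness (through the countability of countable unions of countable sets) is used in an essential way.
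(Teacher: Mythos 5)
Your proof is correct. It differs from the paper's argument in two ways, both minor but worth recording. First, the reduction to finite measure: you partition $X$ into countably many pieces $X_n$ of finite measure, construct an essential supremum $A_n$ of $\calE_n$ on each piece, and glue; the paper instead replaces $\mu$ by the equivalent finite measure $\nu(A)=\sum_n 2^{-n}\mu(X_n)^{-1}\mu(X_n\cap A)$, which has the same null ideal, and then runs a single global argument. Second, the exhaustion itself: you maximize $\mu$ over countable unions of members of $\calE_n$ and observe that the supremum $s_n$ is attained by one such union $A_n$, so that adjoining any further $E\cap X_n$ cannot increase the measure --- this yields condition (1) of \ref{def.ess.sup} by subtraction (legitimate since $\mu(A_n)<\infty$), while condition (2) is immediate because $A_n$ is itself a countable union of members of $\calE_n$. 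The paper works dually: it maximizes $\mu$ over the $\sigma$-ideal $\calF$ of sets meeting every $E\in\calE$ in a null set and takes $A$ to be the complement of a maximizer, which makes condition (1) the immediate one and condition (2) the one requiring the maximality. The two exhaustions buy the same thing; yours has the small advantage of exhibiting the essential supremum as (piecewise) an actual countable union of members of $\calE$. One quibble with your closing remark: $\sigma$-finiteness is used through the existence of the partition into finite-measure pieces (equivalently, of an equivalent finite measure), not through ``the countability of countable unions of countable sets'', which is just the countability of $\N\times\N$ and holds regardless; what finiteness of $\mu(X_n)$ buys is that $s_n<\infty$, so the subtraction in condition (1) is valid.
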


\begin{proof}
If $(X,\calA,\mu)$ is not finite choose a partition $(X_n)_{n \in \N}$ of $X$ into members of $\calA$ such that $0 < \mu(X_n) < \infty$ for every $n \in \N$ and define a measure $\nu$ on $\calA$ by the formula
\begin{equation*}
\nu(A) = \sum_{n \in \N} 2^{-n} \mu(X_n)^{-1} \mu(X_n \cap A) \,,
\end{equation*}
$A \in \calA$. 
Observing that $\calN_\nu = \calN_\mu$ and that $\nu(X)=2$ we conclude that the proposition follows from its special case when $(X,\calA,\mu)$ is finite.
\par 
We henceforth assume that $\mu(X) < \infty$.
Let $\calE \subset \calA$ and define 
\begin{equation*}
\calF = \calA \cap \{ F : \mu(F \cap E)=0 \text{ for every } E \in \calE \} \,.
\end{equation*}
Notice that $\calF$ is a $\sigma$-ideal. 
Put $\tau = \sup \{ \mu(F) : F \in \calF \} < \infty$.
There exists a nondecreasing sequence $(F_n)_{n \in \N}$ in $\calF$ such that $\mu(F_n) \geq \tau - (n+1)^{-1}$ for every $n \in \N$.
Thus $F := \cup_{n \in \N} F_n \in \calF$ and $\mu(F) = \tau$.
In particular $\mu(G \setminus F) = 0$ for every $G \in \calF$ (for otherwise $\mu(F \cup (G \setminus F)) > \mu(F)$ and $F \cup (G \setminus F) \in \calF$, a contradiction).
We now claim that $A = X \setminus F$ is an $\calN_\mu$-essential supremum of $\calE$.
Indeed:
\begin{enumerate}
\item Given $E \in \calE$, $\mu(E \setminus A) = \mu(E \cap F) = 0$ since $F \in \calF$;
\item If $B \in \calA$ is such that $\mu(E \setminus B)=0$ for every $E \in \calE$ then $G = X \setminus B \in \calF$ and hence $0 = \mu(G \setminus F) = \mu(A \setminus B)$.
\end{enumerate}
\end{proof}

\begin{Empty}
\label{localized.version}
The first obstacle that comes to mind for a measurable space with negligibles $(X,\calA,\calN)$ to be localizable is that $\calA$ is not required to be stable under arbitrary unions.
If it were, then condition (1) of \ref{def.ess.sup} would be obviously satisfied with $A = \cup \calE \in \calA$.
This is not the end of the story however as condition (2) may well fail for such choice of $A$. 
In fact we give an example below \ref{CR.1} of a measurable space with negligibles of the type $(X,\calP(X),\calN)$ which is consistently not localizable.
Worse yet we exhibit a proper $\sigma$-algebra $\calB \subset \calP(X)$ such that $(X,\calA,\calA \cap \calN)$ is consistently non localizable whenever $\calB \subset \calA \subset \calP(X)$ is a $\sigma$-algebra.
This ruins the hope that with each measurable space with negligibles one can associate a localizable version of it by <<adding enough measurable sets>> to the given $\sigma$-algebra.
%
\end{Empty}

\begin{Empty}
\label{last.lemma}
{\it 
Assume $(X,\calA,\calN)$ and $(Y,\calB,\calM)$ are measurable spaces with negligibles and $f : X \to Y$ is a bijection such that
\begin{equation*}
\calA = \calP(X) \cap \{ f^{-1}(B) : B \in \calB \}
\end{equation*}
and
\begin{equation*}
\calN = \calP(X) \cap \{ f^{-1}(M) : M \in \calM \} \,.
\end{equation*}
It follows that $(X,\calA,\calN)$ is localizable if and only if $(Y,\calB,\calM)$ is localizable.
}
\par 
This can be checked directly by routine verification from the definition of essential supremum or by observing that the quotient Boolean algebras $\calA_{\calN}$ and $\calB_{\calM}$ are isomorphic and referring to \ref{abstract.loc}. 
%
%
We will use this result in \ref{pue.8} below.
\end{Empty}

\begin{Empty}
If $(X,\calA)$ is a measurable space and $E \in \calA$ we associate with it its subspace $(E,\calA_E)$ where $\calA_E = \calP(E) \cap \{E \cap A : A \in \calA\}$.
\end{Empty}

\begin{Empty}
Let $(X,\calA,\calN)$ be a measurable space with negligibles and let $\calE \subset \calA$.
A {\em family subordinated to $\calE$} is a family $(g_E)_{E \in \calE}$ such that
\begin{enumerate}
\item $g_E : E \to \R$ is $\calA_E$-measurable for every $E \in \calE$.
\end{enumerate}
We further say that $(g_E)_{E \in \calE}$ is {\em compatible} if also
\begin{enumerate}
\item[(2)] For every pair $E_1,E_2 \in \calE$ one has $E_1 \cap E_2 \cap \{ g_{E_1} \neq g_{E_2} \} \in \calN$.
\end{enumerate}
A {\em gluing} of a compatible family $(g_E)_{E \in \calE}$ subordinated to $\calE$ is a function $g : X \to \R$ such that
\begin{enumerate}
\item[(3)] $g$ is $\calA$-measurable;
\item[(4)] $E \cap \{ g \neq g_E \} \in \calN$ for every $E \in \calE$.
\end{enumerate}
\end{Empty}

\begin{Proposition}
\label{gluing}
Let $(X,\calA,\calN)$ be a measurable space with negligibles. 
The following are equivalent.
\begin{enumerate}
\item $(X,\calA,\calN)$ is localizable.
\item For every $\calE \subset \calA$, every compatible family subordinated to $\calE$ admits a gluing.
\end{enumerate}
\end{Proposition}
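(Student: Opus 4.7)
I would prove the two implications separately; the main weight is on (1) $\Rightarrow$ (2).

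\textbf{(1) $\Rightarrow$ (2): constructing a gluing from its level sets.} Given a compatible family $(g_E)_{E \in \calE}$, for each $q \in \Q$ the collection $\calE_q = \{E \cap \{g_E > q\} : E \in \calE\} \subset \calA$ admits an $\calN$-essential supremum $A_q \in \calA$ by localizability. I would define $g : X \to [-\infty,\infty]$ by $g(x) = \sup\{q \in \Q : x \in A_q\}$ (with $\sup \emptyset = -\infty$). Measurability is immediate since $\{g > q\} = \cup_{r \in \Q, r > q} A_r \in \calA$. The key step is to show that for every $E \in \calE$ and every $q \in \Q$, both $E \cap \{g_E > q\} \setminus A_q$ and $E \cap A_q \cap \{g_E \leq q\}$ lie in $\calN$. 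The first inclusion is immediate from condition (1) of an $\calN$-essential supremum. For the second, I would verify that $A_q \setminus (E \cap \{g_E \leq q\})$ is itself an $\calN$-essential upper bound of $\calE_q$: given $E' \in \calE$, one decomposes
\begin{equation*}
E' \cap \{g_{E'} > q\} \setminus \bigl(A_q \setminus (E \cap \{g_E \leq q\})\bigr) = \bigl(E' \cap \{g_{E'} > q\} \setminus A_q\bigr) \cup \bigl(E' \cap E \cap \{g_{E'} > q\} \cap \{g_E \leq q\}\bigr),
\end{equation*}
observing that the first term is in $\calN$ by the essential supremum property, while the second lies in $E' \cap E \cap \{g_{E'} \neq g_E\} \in \calN$ by compatibility. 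Condition (2) of the essential supremum applied to the upper bound $A_q \setminus (E \cap \{g_E \leq q\})$ then forces $A_q \cap E \cap \{g_E \leq q\} \in \calN$. Taking a countable union over $q \in \Q$ shows that $g$ equals $g_E$ on $E$ up to some $N_E \in \calN$; in particular $g$ is finite on $E$ outside $N_E$. Redefining $g$ to equal $0$ on $\{|g| = \infty\}$ yields a real-valued $\calA$-measurable gluing.

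\textbf{(2) $\Rightarrow$ (1): extracting an essential supremum from a gluing.} Given $\calE \subset \calA$, let $\calU \subset \calA$ denote the set of $\calN$-essential upper bounds of $\calE$, and enlarge $\calE$ to $\tilde\calE = \calE \cup \{X \setminus A : A \in \calU\} \subset \calA$. Define constant subordinated functions by $g_F \equiv 1$ if $F \in \calE$ and $g_F \equiv 0$ if $F \in \tilde\calE \setminus \calE$. The only nontrivial instance of compatibility is for $E \in \calE$ and $F = X \setminus A$ with $A \in \calU$ and $F \notin \calE$, where $E \cap F \cap \{1 \neq 0\} = E \setminus A \in \calN$ by definition of $\calU$. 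Apply (2) to obtain a gluing $g$ and set $A^* = \{g = 1\} \in \calA$. Then $E \setminus A^* = E \cap \{g \neq 1\} \in \calN$ for every $E \in \calE$, and for every $A \in \calU$ either $X \setminus A \in \tilde\calE \setminus \calE$, in which case $A^* \setminus A = (X \setminus A) \cap \{g = 1\} \subset (X \setminus A) \cap \{g \neq 0\} \in \calN$ by the gluing property, or $X \setminus A \in \calE$, in which case $X \setminus A = (X \setminus A) \setminus A \in \calN$ since $A \in \calU$, so again $A^* \setminus A \subset X \setminus A \in \calN$. Hence $A^*$ is an $\calN$-essential supremum of $\calE$.

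\textbf{Main obstacle.} The subtle point is in (1) $\Rightarrow$ (2): the sets $A_q$ are chosen independently for each rational $q$, so a priori there is no reason they should organize themselves into the level sets of a single function that locally agrees with the $g_E$'s. The entire burden of forcing coherence falls on the universal property of the essential supremum together with the compatibility hypothesis, precisely via the identity $A_q \cap E \cap \{g_E \leq q\} \in \calN$ established above.
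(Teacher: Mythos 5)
Your proof is correct and follows essentially the same route as the paper: for (1)$\Rightarrow$(2) you take essential suprema $A_q$ of the rational level sets and recover the gluing as $\sup\{q : x \in A_q\}$, with the same two-sided comparison argument (strict versus non-strict level sets is immaterial); for (2)$\Rightarrow$(1) you glue a two-valued indicator family and take $\{g=1\}$, differing from the paper only in that your auxiliary zero-family consists of complements of essential upper bounds rather than the slightly larger ideal of sets meeting every non-negligible member of $\calE$ negligibly. No gaps.
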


\begin{proof}
$(1) \Rightarrow (2)$ Let $\calE \subset \calA$ and let $(g_E)_{E \in \calE}$ be a compatible family subordinated to $\calE$.
With each $q \in \Q$ and $E \in \calE$ we associate $E_q = E \cap \{ g_E \geq q \} \in \calA$.
Thus given $q \in \Q$ the family $\{ E_q : E \in \calE\}$ admits an $\calN$ essential supremum which we denote as $A_q \in \calA$.
Define $\bar{g} : X \to [-\infty,+\infty]$ by the formula $\bar{g}(x) = \sup \{ q : x \in A_q\}$, $x \in X$, where as usual $\inf \emptyset = - \infty$. 
Notice that if $q \in \Q$ then $\{ \bar{g} > q \} = \cup_{\substack{r \in \Q\\r > q}}A_r \in \calA$, thus $\bar{g}$ is $\calA$-measurable. 
\par 
Given $E \in \calE$ we shall now establish that
\begin{equation}
\label{eq.1}
E \cap \{ g_E \neq \bar{g} \} \in \calN \,.
\end{equation}
If $x \in E \cap \{ g_E < \bar{g}\}$ then there exists $q \in \Q$ such that $g_E(x) < q$ and $x \in A_q$. Accordingly,
\begin{equation}
\label{eq.2}
E \cap \{ g_E < \bar{g} \} \subset \cup_{q \in \Q} E \cap (A_q \setminus E_q ) \,.
\end{equation}
Now if $q \in \Q$ and $E' \in \calE$ then
\begin{equation*}
E'_q \setminus (E^c \cup E_q) = E \cap \{ g_E < q \} \cap E' \cap \{ g_{E'} \geq q \} \subset E \cap E' \cap \{ g_E \neq g_{E'} \} \in \calN \,.
\end{equation*}
Since $E' \in \calE$ is arbitrary we infer that
\begin{equation*}
\calN \ni A_q \setminus (E^c \cup E_q) = E \cap (A_q \setminus E_q)
\end{equation*}
and it therefore ensues from \eqref{eq.2} that
\begin{equation}
\label{eq.3}
E \cap \{ g_E < \bar{g} \} \in \calN \,.
\end{equation}
\par 
Next if $x \in E \cap \{ g_E > \bar{g} \}$ then there exists $q \in \Q$ such that $g_E(x) > q$ and $x \not\in A_q$.
Consequently,
\begin{equation}
\label{eq.4}
E \cap \{ g_E > \bar{g} \} \subset \cup_{q \in \Q} E \cap (E_q \setminus A_q) \in \calN \,.
\end{equation}
It now follows from \eqref{eq.3} and \eqref{eq.4} that \eqref{eq.1} holds.
\par 
Finally we let $A = \{ \bar{g} \in \R \} \in \calA$ and $g = \bar{g}.\ind_A$ (with the usual convention that $(\pm \infty).0=0$). Thus $g$ is $\calA$-measurable and, for each $E \in \calE$, $E \cap \{ g \neq g_E \} \subset E \cap \{ \bar{g} \neq g_E \} \in \calN$ since $g_E$ is $\R$ valued.
Whence $g$ is a gluing of $(g_E)_{E \in \calE}$.
\par 
$(2) \Rightarrow (1)$ Let $\calE \subset \calA$ and define $\calE^* = \calE \setminus \calN$ as well as
\begin{equation*}
\calF = \calA \cap \{ F : F \cap E \in \calN \text{ for every } E \in \calE^* \} \,.
\end{equation*}
Notice that $\calF \cap \calE^* = \emptyset$. 
Put $\calG = \calE^* \cup \calF$ and define a family $(g_G)_{G \in \calG}$ subordinated to $\calG$ as follows.
If $E \in \calE^*$ then $g_E = \ind_E$, and if $F \in \calF$ then $g_F = 0.\ind_F$. 
One easily checks that $(g_G)_{G \in \calG}$ is a compatible family, thus it admits a gluing $g$ by assumption.
Let $A = \{ g = 1 \} \in \calA$.
We ought to show that $A$ is an $\calN$ essential supremum of $\calE$. 
\par 
First let $E \in \calE$. 
If $E \in \calN$ then clearly $E \setminus A \in \calN$.
Otherwise $E \in \calE^*$ and hence $E \setminus A = E \cap \{ g \neq 1 \} = E \cap \{ g \neq g_E \} \in \calN$.
Suppose now that $B \in \calE$ is such that $E \setminus B \in \calN$ for every $E \in \calE$.
Let $F = A \setminus B \in \calA$.
Given $E \in \calE$ we observe that $F \cap E = E \cap (A \setminus B) \subset A \setminus B \in \calN$.
Therefore $F \in \calF$.
It follows that $A \setminus B = F \cap A = F \cap \{ g = 1 \} \subset F \cap \{ g \neq g_F \} \in \calN$ and the proof is complete. 
\end{proof}

\begin{Empty}
\label{localized.negligibles}
Given a measurable space with negligibles $(X,\calA,\calN)$ and $\calF \subset \calA$ an arbitrary family, we define
\begin{equation*}
\calN[\calF] = \calA \cap \{ A : A \cap F \in \calN \text{ for every } F \in \calF \} \,.
\end{equation*}
The following are immediate consequences of the definition.
{\it 
\begin{enumerate}
\item $\calN[\calF]$ is a $\sigma$-ideal in $\calA$.
\item $\calN \subset \calN[\calF]$.
\item If $\calF_1 \subset \calF_2$ then $\calN[\calF_1] \supset \calN[\calF_2]$.
\item If $g$ and $g'$ are both gluings of a compatible family $(g_E)_{E \in \calE}$ subordinated to $\calE$ then $\{ g \neq g' \} \in \calN[\calE]$.
\end{enumerate}
}
\end{Empty}

\begin{Empty}
\label{ideals}
Let $(X,\calA,\calN)$ be a measurable space with negligibles. 
We say that $\calI \subset \calA$ is an {\em ideal} in $\calA$ whenever the following holds:
\begin{enumerate}
\item $\emptyset \in \calI$;
\item If $A \in \calA$, $B \in \calI$ and $A \subset B$ then $A \in \calI$;
\item If $A_1,\ldots,A_N \in \calI$ then $\cup_{n=1}^N A_n \in \calI$.
\end{enumerate}
One observes that each $\calE \subset \calA$ is contained in a smallest ideal which we denote as $\rmideal(\calE)$.
The reader will easily check that
\begin{equation*}
\rmideal(\calE) = \calA \cap \{ A : \text{ there exist } E_1,\ldots,E_n \in \calE \text{ such that } A \subset \cup_{n=1}^N E_n \} \,.
\end{equation*}
Therefore
{\it 
\begin{enumerate}
\item[(4)] $\calN[\calE] = \calN[\rmideal(\calE)]$.
\item[(5)] $A \in \calA$ is an $\calN$ essential supremum of $\calE$ if and only if $A$ is an $\calN$ essential supremum of $\rmideal(\calE)$.
\end{enumerate}
}
There is no difficulty in showing that the latter is a consequence of the definition of essential supremum and of the following claim: If $C \in \calA$ is such that $E \setminus C \in \calN$ for every $E \in \calE$ then also $F \setminus C \in \calN$ for every $F \in \rmideal(\calE)$.
\end{Empty}

\begin{Empty}[Partition of unity]
Let $(X,\calA,\calN)$ be a measurable space with negligibles, and $\calI \subset \calA$ an ideal.
A {\em partition of unity relative to $\calI$} is a collection $\calE \subset \calI$ such that
\begin{enumerate}
\item $\calE \cap \calN = \emptyset$;
\item For every $E_1,E_2 \in \calE$, if $E_1 \neq E_2$ then $E_1 \cap E_2 \in \calN$;
\item For every $A \in \calI \setminus \calN$ there exists $E \in \calE$ such that $A \cap E \not\in \calN$.
\end{enumerate}
\end{Empty}

\begin{Lemma}
\label{existence.pu}
Let $(X,\calA,\calN)$ be a measurable space with negligibles, and $\calI \subset \calA$ an ideal.
There exists a partition of unity $\calE$ relative to $\calI$.
Furthermore $\calE \neq \emptyset$ in case $\calI \setminus \calN \neq \emptyset$.
\end{Lemma}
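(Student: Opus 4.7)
The plan is to invoke Zorn's lemma on the poset of ``$\calN$-almost disjoint'' subfamilies of $\calI \setminus \calN$. First, the trivial case: if $\calI \setminus \calN = \emptyset$, then $\calE = \emptyset$ vacuously satisfies (1), (2), (3), and the ``furthermore'' clause is empty. So I may assume $\calI \setminus \calN \neq \emptyset$.

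Next, consider the collection
\begin{equation*}
\calP = \calP(\calI \setminus \calN) \cap \left\{ \calF : E_1 \cap E_2 \in \calN \text{ for all distinct } E_1,E_2 \in \calF \right\}
\end{equation*}
partially ordered by inclusion. Any singleton $\{A\}$ with $A \in \calI \setminus \calN$ belongs to $\calP$, so $\calP \neq \emptyset$. If $\calC \subset \calP$ is a chain, then $\cup \calC \in \calP$: any two distinct elements of $\cup \calC$ both lie in a common member of $\calC$ (since $\calC$ is totally ordered), hence their intersection lies in $\calN$, and each element of $\cup \calC$ is in $\calI \setminus \calN$. Thus chains have upper bounds, and Zorn's lemma furnishes a maximal element $\calE \in \calP$.

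By construction $\calE$ satisfies (1) and (2); I verify (3) by maximality. Suppose for contradiction that there exists $A \in \calI \setminus \calN$ with $A \cap E \in \calN$ for every $E \in \calE$. In particular $A \notin \calE$, else $A = A \cap A \in \calN$, contradicting $A \notin \calN$. Then $\calE \cup \{A\} \in \calP$: its elements lie in $\calI \setminus \calN$, and for any two distinct elements $E_1, E_2$, either both belong to $\calE$ (so $E_1 \cap E_2 \in \calN$ by assumption on $\calE$) or one of them equals $A$ (so $E_1 \cap E_2 \in \calN$ by the contradiction hypothesis). This strictly enlarges $\calE$, contradicting maximality. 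Hence (3) holds.

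Finally, since $\calI \setminus \calN \neq \emptyset$, picking any $A$ in it gives $\{A\} \in \calP$; a maximal element majorizing $\{A\}$ contains $A$ and is therefore nonempty. The main (and only) subtlety is checking that the enlarged family $\calE \cup \{A\}$ in the maximality argument really is still almost-disjoint with every element of $\calE$, which is precisely what the contradiction hypothesis supplies. No countable additivity is needed, only the ideal properties and the choice principle.
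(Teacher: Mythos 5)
Your proof is correct and is exactly the argument the paper has in mind: the paper's own proof consists of the single sentence ``This is a routine application of Zorn's Lemma,'' and your write-up supplies precisely that routine application (maximal almost-disjoint subfamily of $\calI \setminus \calN$, with condition (3) extracted from maximality). No gaps.
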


\begin{proof}
This is a routine application of Zorn's Lemma.
\end{proof}

\begin{Empty}[Magnitude]
\label{magnitude}
Let $(X,\calA,\calN)$ be a measurable space with negligibles, $\calI$ an ideal in $\calA$, and $\kappa$ a cardinal.
We say that $(X,\calA,\calN)$ {\em has magnitude less than $\kappa$ relative to $\calI$} whenever 
for every $\calE \subset \calI$ with the following properties:
\begin{enumerate}
\item $\calE \cap \calN = \emptyset$;
\item For every $E_1,E_2 \in \calE$, if $E_1 \neq E_2$ then $E_1 \cap E_2 \in \calN$;
\end{enumerate}
one has $\rmcard \calE \leq \kappa$.
\end{Empty}

\section{Semifinite and Semilocalizable Measure Spaces}
\label{MS}

For a measure space $(X,\calA,\mu)$ the canonical map from $\bL_\infty(X,\calA,\mu)$ to $\bL_1(X,\calA,\mu)^*$ is in general neither injective nor surjective.
It is known that injectivity is equivalent to semifiniteness of $(X,\calA,\mu)$.
We identify here a condition equivalent to surjectivity, which we call semilocalizability.

\begin{Empty}
\label{upsilon}
Let $(X,\calA,\mu)$ be a measure space.
We consider the map
\begin{equation*}
\Upsilon : \bL_\infty(X,\calA,\mu) \to \bL_1(X,\calA,\mu)^*
\end{equation*}
defined in the following way.
Given $\bg \in \bL_\infty(X,\calA,\mu)$ and $\boldf \in \bL_1(X,\calA,\mu)$ we let $\Upsilon(\bg)(\boldf) = \int_X gf d\mu$ for a choice of $g \in \bg$ and $f \in \boldf$. 
In general $\Upsilon$ is neither injective nor surjective.
In this Section we state a necessary and sufficient condition for $\Upsilon$ to be injective (namely that the measure space be semifinite), and a necessary and sufficient condition for $\Upsilon$ to be surjective (namely that the measure space be semilocalizable).  
\end{Empty}

\begin{Empty}
We say that a measure space $(X,\calA,\mu)$ is {\em semifinite} whenever the following holds: For every $A \in \calA$ such that $\mu(A) = \infty$ there exists $B \in \calA$ such that $B \subset A$ and $0 < \mu(B) < \infty$.
Clearly all $\sigma$-finite measure spaces are semifinite.
We recall that $\Upsilon$ is injective if and only if $(X,\calA,\mu)$ is semifinite and in that case $\Upsilon$ is an isometry, \cite[243G(a)]{FREMLIN.II}.
Furthermore if $(X,\calA,\mu)$ is semifinite then $\Upsilon$ is bijective if and only if $(X,\calA,\calN_\mu)$ is localizable, \cite[243G(b)]{FREMLIN.II}\footnote{\textsc{D.H. Fremlin} calls localizable a measure space $(X,\calA,\mu)$ which is semifinite and such that $(X,\calA,\calN_\mu)$ is (in the vocabulary introduced in the present paper) a localizable measurable space with negligibles.}.
Below we give a necessary and sufficient condition for $\Upsilon$ to be surjective (not assuming that it be injective in the first place).
This seems to be new.
\end{Empty}

\begin{Empty}
Let $(X,\calA,\mu)$ be a measure space.
We say that $A \in \calA$ is {\em purely infinite} if for every $B \in \calA$ such that $B \subset A$  one has $\mu(B) = 0$ or $\mu(B) = \infty$.
Thus $(X,\calA,\mu)$ is semifinite if and only if there exists no purely infinite $A \in \calA$.
We define
\begin{equation*}
\begin{split}
\calN_\mu & = \calA \cap \{ A : \mu(A)=0 \} \\
\calA^f_\mu & = \calA \cap \{ A : \mu(A) < \infty \} \\
\calA^{pi}_\mu & = \calA \cap \{ A : A \text{ is purely infinite } \} \,,
\end{split}
\end{equation*}
and we abbreviate $\calA^f = \calA^f_\mu$ and $\calA^{pi} = \calA^{pi}_\mu$ when no confusion occurs, which is almost always.
Clearly $\calA^f$ is an ideal, whereas $\calN_\mu$ and $\calN_\mu \cup \calA^{pi}$ are $\sigma$-ideals.
In fact one easily checks that $\calN_\mu[\calA^f] = \calN_\mu \cup \calA^{pi}$ and also that $\calN_\mu \cup \calA^{pi} = \calN_{\mu_{sf}}$ where $(X,\calA,\mu_{sf})$ is the semifinite version of $(X,\calA,\mu)$, see \cite[213X(c)]{FREMLIN.II} and also \ref{sigmafin} below.
\par
Referring to \ref{localized.negligibles} we consider the $\sigma$-ideal $\calN_\mu[\calA^f]$ which will play the major role in the present Section.
When we need to refer to its members we call these {\em locally $\mu$ null}.
\end{Empty}

\begin{Lemma}
\label{sigmafin}

Let $(X,\calA,\mu)$ be a measure space. The following are equivalent:
\begin{enumerate}
\item $(X,\calA,\mu)$ is semifinite;
\item $\calN_\mu = \calN_\mu[\calA^f]$.
\end{enumerate}
\end{Lemma}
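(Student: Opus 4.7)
The plan is a short direct argument, since the inclusion $\calN_\mu \subseteq \calN_\mu[\calA^f]$ is automatic (if $\mu(A)=0$ then $\mu(A\cap F)=0$ for every $F$), so only the reverse inclusion needs to be connected to semifiniteness.

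For $(1)\Rightarrow(2)$, I would assume $(X,\calA,\mu)$ is semifinite and take $A\in\calN_\mu[\calA^f]$; the task is to show $\mu(A)=0$. Arguing by contradiction and splitting on whether $\mu(A)$ is finite or infinite: if $0<\mu(A)<\infty$ then $A\in\calA^f$ itself, so testing with $F=A$ gives $\mu(A)=\mu(A\cap A)=0$, a contradiction. If $\mu(A)=\infty$, semifiniteness produces $B\in\calA$ with $B\subset A$ and $0<\mu(B)<\infty$; then $B\in\calA^f$ but $\mu(A\cap B)=\mu(B)>0$, contradicting $A\in\calN_\mu[\calA^f]$.

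For $(2)\Rightarrow(1)$, I would argue by contrapositive. If $(X,\calA,\mu)$ fails to be semifinite, pick $A\in\calA$ with $\mu(A)=\infty$ such that every measurable $B\subset A$ has $\mu(B)\in\{0,\infty\}$. For any $F\in\calA^f$, the set $A\cap F$ is a measurable subset of $A$, hence of measure $0$ or $\infty$; but $A\cap F\subset F$ forces $\mu(A\cap F)\leq\mu(F)<\infty$, so $\mu(A\cap F)=0$. Thus $A\in\calN_\mu[\calA^f]$ while $\mu(A)=\infty\neq 0$, so $A\notin\calN_\mu$, violating (2).

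There is no real obstacle here: the whole lemma is essentially unpacking the definitions of $\calN_\mu[\calA^f]$ and of purely infinite sets. The only mild subtlety is being careful to handle the $0<\mu(A)<\infty$ case in the first implication by testing against $A$ itself, rather than only thinking about $\mu(A)=\infty$. The argument makes no use of $\sigma$-additivity beyond the measurability of intersections, so no technical machinery is required.
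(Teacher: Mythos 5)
Your proof is correct and follows essentially the same route as the paper's: both reduce to the automatic inclusion $\calN_\mu \subset \calN_\mu[\calA^f]$ and then identify the sets in $\calN_\mu[\calA^f]\setminus\calN_\mu$ with the purely infinite sets (the paper phrases this via its notation $\calA^{pi}$ and the observation $\calN_\mu[\calA^f]=\calN_\mu\cup\calA^{pi}$, while you unpack the same two computations directly). Your explicit handling of the case $0<\mu(A)<\infty$ by testing against $F=A$ is exactly the content of the paper's "one easily observes" step.
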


\begin{proof}
It follows from \ref{localized.negligibles}(1) that (2) is equivalent to $\calN_\mu[\calA^f] \subset \calN_\mu$.
Therefore $\neg (2)$ is equivalent to $\calN_\mu[\calA^f] \setminus \calN_\mu \neq \emptyset$.
One easily observes that $\calN_\mu[\calA^f] \setminus \calN_\mu = \calA^{pi}$.
Since $(X,\calA,\mu)$ is semifinite if and only if $\calA^{pi}=\emptyset$ the proof is complete.
\end{proof}

\begin{Empty}
\label{def.semilocalizable}
A measure space $(X,\calA,\mu)$ is called {\em semilocalizable} if the measurable space with negligibles $(X,\calA,\calN_\mu[\calA^f])$ is localizable.
\end{Empty}

\begin{Proposition}
\label{Riesz}
Let $(X,\calA,\mu)$ be a measure space.
The following are equivalent.
\begin{enumerate}
\item $(X,\calA,\mu)$ is semilocalizable.
\item $\Upsilon$ is surjective.
\end{enumerate}
\end{Proposition}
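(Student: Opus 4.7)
The plan is to prove both implications by invoking the classical Riesz representation on each finite-measure subspace and then using the semilocalizability hypothesis to globalize via Proposition \ref{gluing}.

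For the backward direction, assume $(X,\calA,\mu)$ is semilocalizable and let $\alpha \in \bL_1(X,\calA,\mu)^*$. For each $A \in \calA^f$ the classical Riesz representation on the finite measure space $(A,\calA_A,\mu_A)$ produces $g_A \in L_\infty(A,\calA_A,\mu_A)$ with $\|g_A\|_\infty \leq \|\alpha\|$ and $(\alpha \circ \iota_A)(\mathbf{f}) = \int_A g_A f\,d\mu_A$. Almost-everywhere uniqueness of Radon-Nikod\'ym derivatives on $A \cap A' \in \calA^f$ gives $\mu(A\cap A'\cap\{g_A \neq g_{A'}\})=0$, so $(g_A)_{A\in\calA^f}$ is a compatible family subordinated to $\calA^f$ in the measurable space with negligibles $(X,\calA,\calN_\mu[\calA^f])$. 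By semilocalizability and Proposition \ref{gluing} a gluing $g : X \to \R$ exists. Since $\mu(A \cap \{|g|>\|\alpha\|\}) = 0$ for every $A \in \calA^f$, the set $\{|g|>\|\alpha\|\}$ is locally $\mu$ null; replacing $g$ by $g \cdot \ind_{\{|g|\leq \|\alpha\|\}}$ produces a representative in $L_\infty(X,\calA,\mu)$ with essential bound $\|\alpha\|$ that is still a gluing. Finally, for $\mathbf{f} \in \bL_1$ represented by $f$, one exhausts the $\sigma$-finite set $\{f \neq 0\}$ by an increasing sequence $(A_n)$ in $\calA^f$ and concludes $\Upsilon(g^\bullet)(\mathbf{f}) = \alpha(\mathbf{f})$ via dominated convergence, using that $g = g_{A_n}$ on $A_n$ up to $\mu$-null.

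For the forward direction, assume $\Upsilon$ surjective and let $\calE \subset \calA$; I will construct an $\calN_\mu[\calA^f]$ essential supremum for $\calE$. For each $A \in \calA^f$ the finite measure space $(A,\calA_A,\mu_A)$ is localizable by \ref{sigmafinite.localizable}, so $\{E \cap A : E \in \calE\}$ admits an essential supremum $S_A \in \calA_A$, unique up to $\mu_A$-null. A direct check from the two defining properties shows $S_{A} \cap A' = S_{A \cap A'}$ up to $\mu$-null for all $A,A'\in\calA^f$. Define a linear functional $\alpha$ on integrable simple functions $\phi$ supported in some $A \in \calA^f$ by $\alpha(\phi^\bullet) = \int_A \phi\,\ind_{S_A}\,d\mu$; the compatibility just noted makes this independent of the choice of $A$, and $|\alpha(\phi^\bullet)| \leq \|\phi^\bullet\|_1$ lets $\alpha$ extend by density to $\bL_1(X,\calA,\mu)^*$. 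By surjectivity $\alpha = \Upsilon(g^\bullet)$ for some $g \in L_\infty$. Testing against $\ind_B$ for arbitrary $B \in \calA_A$ and using Radon-Nikod\'ym uniqueness on $A$ forces $g|_A = \ind_{S_A}$ $\mu_A$-a.e. for every $A \in \calA^f$. Setting $S = \{g \geq 1/2\} \in \calA$, one has $S \cap A = S_A$ up to $\mu_A$-null for each $A \in \calA^f$, whence the two defining properties of $S_A$ translate into the two essential-supremum conditions for $S$ modulo $\calN_\mu[\calA^f]$ by testing against a varying $A \in \calA^f$.

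The main obstacle is the bookkeeping that ties the locally defined objects into a global one of the correct type. In the backward direction one must upgrade boundedness of the gluing -- obtained only on sets in $\calA^f$ -- to global $\mu$-essential boundedness, which is exactly where the inclusion $\calN_\mu \subset \calN_\mu[\calA^f]$ is used to truncate without losing the gluing property. In the forward direction the subtle point is verifying that the compatibility $S_A \cap A' = S_{A \cap A'}$ modulo $\mu$-null -- itself a consequence of the uniqueness of essential suprema in $(A\cap A',\calA_{A\cap A'},\mu_{A\cap A'})$ -- is strong enough to make $\alpha$ well-defined on all integrable simple functions before invoking surjectivity.
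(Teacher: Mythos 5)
Your proof is correct. The implication (1)$\Rightarrow$(2) is essentially the paper's argument: classical Riesz representation on each $(A,\calA_A,\mu_A)$, compatibility of the family $(g_A)_{A\in\calA^f}$, a gluing via \ref{gluing}, truncation on the locally null set where the gluing exceeds $\|\alpha\|$, and dominated convergence along the exhaustion $A_n=\{|f|\geq n^{-1}\}$. Where you genuinely diverge is in (2)$\Rightarrow$(1). The paper first reduces to the case where $\calE$ is an ideal (via \ref{ideals}(5)), defines $\alpha(f)=\sup_{E\in\calE}\int_E f\,d\mu$ on nonnegative integrable $f$, verifies additivity by hand (this is exactly where the ideal property is used, since one needs $E_1\cup E_2\in\calE$), extends to a bounded linear functional, and takes $A=\{g\neq 0\}$ for the representing $g$; the verification that $A$ works requires the auxiliary observation that $g\geq 0$ locally a.e. You instead exploit the localizability of each finite measure subspace $(A,\calA_A,\mu_A)$ from \ref{sigmafinite.localizable} to produce local essential suprema $S_A$, check the restriction compatibility $S_A\cap A'=S_{A\cap A'}$ modulo null (which does hold, by testing the defining property of $S_A$ against $B\cup(A\setminus A')$), and use surjectivity of $\Upsilon$ purely as a gluing device for the compatible family $(\ind_{S_A})_{A\in\calA^f}$, recovering the global supremum as $S=\{g\geq 1/2\}$. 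Your route needs no reduction to an ideal and makes the two implications pleasingly symmetric (both are about gluing compatible local data), at the cost of the extra bookkeeping around uniqueness and restriction of essential suprema; the paper's route is more self-contained, trading that bookkeeping for the direct verification that $\sup_{E}\int_E(\cdot)\,d\mu$ is additive and for the sign analysis of the representing function. Both are complete proofs once the compressed steps (well-definedness of your $\alpha$ on simple functions, and the translation of the two defining properties of $S_A$ into the $\calN_\mu[\calA^f]$ essential supremum conditions for $S$) are written out, and those steps check out.
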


\begin{proof}
$(1) \Rightarrow (2)$ To each $E \in \calA^f$ we associate the linear isometry $\bbeta_E : \bL_1(E,\calA_E,\mu_E) \to \bL_1(X,\calA,\mu)$ defined in the obvious way (extending $\boldf \in \bL_1(E,\calA_E,\mu_E)$ by zero outside of $E$), as well as the linear map $\brho_E : \bL_1(X,\calA,\mu) \to \bL_1(E,\calE,\mu_E)$ (restricting $\boldf \in \bL_1(X,\calA,\mu)$ to $E$).
Thus $(\brho_E \circ \bbeta_E)(\boldf) = \boldf$ for every $\boldf \in \bL_1(E,\calA_E,\mu_E)$ and $(\bbeta_E \circ \brho_E)(\boldf) = \boldf$ for every $\boldf \in \bL_1(X,\calA,\mu)$ such that $E^c \cap \{ f \neq 0 \} \in \calN_\mu$, $f \in \boldf$.
Given $\balpha \in \bL_1(X,\calA,\mu)^*$ and $E \in \calA^f$ it follows that $\balpha \circ \bbeta_E \in \bL_1(X,\calA_E,\mu_E)^*$.
Since $(E,\calA_E,\mu_E)$ is a finite measure space the classical Riesz Representation Theorem yields an $\calA_E$-measurable function $g_E : E \to \R$ such that $(\balpha \circ \bbeta_E)(\boldf) = \int_X g_E f d\mu_E$ for every $f \in \boldf \in \bL_1(E,\calA_E,\mu_E)$ and $\sup |g_E| \leq \| \balpha \circ \bbeta_E \| \leq \| \balpha \|$.
We shall now observe that the family $(g_E)_{E \in \calA^f}$ is compatible.
Let $E_1,E_2 \in \calA^f$, $n \in \N^*$ and define $Z_n = E_1 \cap E_2 \cap \{ g_{E_1} \leq - n^{-1} + g_{E_2} \}$. Thus $f_n=\ind_{Z_n} \in L_1(X,\calA,\mu)$ and
\begin{multline*}
\balpha(\boldf_n) = (\balpha \circ \bbeta_{E_1} \circ \brho_{E_1})(\boldf_n) = \int_{E_1} g_{E_1} f_n d\mu_{E_1} = \int_{Z_n} g_{E_1} d\mu \\ \leq - n^{-1} \mu(Z_n) + \int_{Z_n} g_{E_2} d\mu = - n^{-1} \mu(Z_n) + \int_{E_2} g_{E_2} f_n d\mu_{E_2} \\ = - n^{-1} \mu(Z_n) + (\balpha \circ \bbeta_{E_2} \circ \brho_{E_2})(\boldf_n) = - n^{-1} \mu(Z_n) + \balpha(\boldf_n) \,.
\end{multline*}
Therefore $\mu(Z_n)=0$, thus also $E_1 \cap E_2 \cap \{ g_{E_1} < g_{E_2} \} = \cup_{n \in \N^*} Z_n \in \calN_\mu$, and in turn $E_1 \cap E_2 \cap \{ g_{E_1} \neq g_{E_2} \} \in \calN_\mu \subset \calN_\mu[\calA^f]$.
Since $(X,\calA,\calN_\mu[\calA^f])$ is localizable by assumption it follows from \ref{gluing} that $(g_E)_{E \in \calA^f}$ admits a gluing $\tilde{g} : X \to \R$. 
We let $Z = X \cap \{ |\tilde{g}| > \|\balpha\| \} \in \calA$.
It ensues from our choice of a special representative $g_E$ that $E \cap Z \subset E \cap \{ \tilde{g} \neq g_E \} \in \calN_\mu[\calA^f]$ for every $E \in \calA^f$.
Hence the function $g = \tilde{g}.\ind_{Z^c}$ is also a gluing of $(g_E)_{E \in \calA^f}$, and furthermore $\sup |g| \leq \|\balpha\| < \infty$.
Therefore $\bg \in \bL_\infty(X,\calA,\mu)$ and it remains to establish that $\Upsilon(\bg) = \balpha$.
\par 
Let $f \in \boldf \in \bL_1(X,\calA,\mu)$, define $A = \{ f \neq 0 \}$, and $A_n = \{ |f| \geq n^{-1} \}$, $n \in \N^*$. 
Thus $A = \cup_{n \in \N^*} A_n$ and $A_n \in \calA^f$ for each $n \in \N^*$.
Letting $f_n = f.\ind_{A_n}$ we notice that $(\boldf_n)_{n \in \N^*}$ converges to $\boldf$ in $\bL_1(X,\calA,\mu)$, whence $\lim_n \balpha(\boldf_n) = \balpha(\boldf)$.
We also notice that $gf_n \to gf$ as $n \to \infty$, everywhere, and that $|gf_n| \leq |gf| \in L_1(X,\calA,\mu)$, so that the Dominated Convergence Theorem applies to $(gf_n)_{n \in \N^*}$.
Accordingly,
\begin{equation*}
\begin{split}
\lim_n \balpha(\boldf_n) & = \lim_n (\balpha \circ \bbeta_{A_n} \circ \brho_{A_n} )(\boldf_n) \\
& = \lim_n \int_{A_n} g_{A_n} f_n d\mu_{A_n} \\
& = \lim_n \int_{A_n} g f_n d\mu \quad 
\text{(because $A_n \cap \{ g \neq g_{A_n} \} \in \calN_\mu$ according to \ref{sigmafin})}\\
& = \lim_n \int_X g f_n d\mu \\
& = \int_X gf d\mu \\
& = \Upsilon(\bg)(\boldf) \,. 
\end{split}
\end{equation*}
\par 
$(2) \Rightarrow (1)$ Let $\calE \subset \calA$. 
We ought to show that $\calE$ admits an $\calN_\mu[\calA^f]$-essential supremum in $\calA$.
According to \ref{ideals}(5) there is no restriction to assume that $\calE$ is an ideal.
We will define some $\balpha \in \bL_1(X,\calA,\mu)^*$ associated with $\calE$.
We start by defining $\alpha(f) \in \R_+$ associated with $f \in L_1(X,\calA,\mu)$, $f \geq 0$, by the following formula:
\begin{equation*}
\alpha(f) = \sup_{E \in \calE} \int_E f d\mu \,.
\end{equation*} 
We claim that the following hold:
\begin{enumerate}
\item[(a)] For every $f \in L_1(X,\calA,\mu)^+$ one has $0 \leq \alpha(f) \leq \int_X f d\mu < \infty$;
\item[(b)] For every $f_1,f_2 \in L_1(X,\calA,\mu)^+$ one has $\alpha(f_1+f_2) = \alpha(f_1) + \alpha(f_2)$;
\item[(c)] For every $f \in L_1(X,\calA,\mu)^+$ and every $t \geq 0$ one has $\alpha(t.f) = t.\alpha(f)$;
\item[(d)] For every $f_1,f_2,f'_1,f_2' \in L_1(X,\calA,\mu)^+$ if $f_1-f_2 = f'_1-f'_2 \geq 0$ then 
$\alpha(f_1) - \alpha(f_2) = \alpha(f'_1) - \alpha(f'_2)$.
\end{enumerate}
Claims (a) and (c) are obvious.
For proving (d) we notice that $\alpha(f_1) + \alpha(f'_2) = \alpha(f_1+f'_2) = \alpha(f'_1+f_2) = \alpha(f'_1) + \alpha(f_2)$, according to (b).
Regarding (b) we first notice that $\alpha(f_1+f_2) \leq \alpha(f_1) + \alpha(f_2)$. 
Furthermore given $\veps > 0$ there are $E_j \in \calE$, $j=1,2$, such that $\alpha(f_j) \leq \veps + \int_{E_j} f_j d\mu$. 
Therefore
\begin{multline*}
\alpha(f_1) + \alpha(f_2) \leq 2\veps + \int_{E_1} f_1 d \mu + \int_{E_2} f_2 d \mu \leq 2\veps + \int_{E_1 \cup E_2} f_1 d \mu + \int_{E_1 \cup E_2} f_2 d \mu \\ = 2\veps + \int_{E_1 \cup E_2} (f_1 + f_2) d\mu \leq 2\veps + \alpha(f_1+f_2)
\end{multline*}
because $E_1 \cup E_2 \in \calE$.
Since $\veps > 0$ is arbitrary, claim (b) follows.
\par 
Now if $f \in L_1(X,\calA,\mu)$ we define $\alpha(f) = \alpha(f^+) - \alpha(f^-) \in \R$ -- a definition compatible with the previous one when $f \geq 0$.
It easily follows from (d) that $\alpha$ is additive.
Observing that $\alpha(-f) = - \alpha(f)$ when $f \geq 0$, it follows from (c) that $\alpha$ is homogeneous of degree 1.
In other words $\alpha$ is linear.
Furthermore (a) implies that $|\alpha(f)| \leq \int_X |f|d\mu$.
It is now clear that $\balpha(\boldf) = \alpha(f)$, $f \in \boldf \in \bL_1(X,\calA,\mu)$, is well defined and that $\balpha \in \bL_1(X,\calA,\mu)^*$.
\par 
It ensues from the hypothesis that there exists $g \in \bg \in \bL_\infty(X,\calA,\mu)$ such that
\begin{equation*}
\int_X gf d\mu = \balpha(\boldf) = \alpha(f) = \sup_{E \in \calE} f d\mu
\end{equation*}
for all $0 \leq f \in \boldf \in \bL_1(X,\calA,\mu)^+$.
We define $A = \{ g \neq 0 \} \in \calA$ and we will next check that $A$ is an $\calN_\mu[\calA^f]$ essential supremum of $\calE$.
\par 
Let $E \in \calE$. 
Define $Z = E \setminus A = E \cap \{ g = 0 \}$.
Given $F \in \calA^f$ let $\ind_{F \cap Z} \in L_1(X,\calA,\mu)^+$.
Thus
\begin{equation*}
0 = \int_X g\ind_{F \cap Z} d\mu = \alpha(\ind_{F \cap Z}) \geq \int_E \ind_{F \cap Z} d\mu = \mu(F \cap (E \setminus A)) \,.
\end{equation*}
Since $F \in \calA^f$ is arbitrary it follows that $E \setminus A \in \calN_\mu[\calA^f]$.
\par 
We next claim that if $F \in \calA^f$ then $F \cap \{ g < 0 \} \in \calN_\mu$.
Letting $Z_n = \{ g \leq - n^{-1} \}$, $n \in \N^*$, we notice that $\ind_{F \cap Z_n} \in L_1(X,\calA,\mu)^+$ whence
\begin{equation*}
- n^{-1} \mu(F \cap Z_n) \geq \int_X g \ind_{F \cap Z_n} d\mu = \alpha(\ind_{F \cap Z_n}) \geq 0 \,.
\end{equation*}
Thus clearly $\mu(F \cap Z_n)=0$ and, since $n \in \N^*$ is arbitrary $\mu(F \cap \{ g < 0 \})=0$.
\par 
Finally we assume that $B \in \calA$ is such that $\mu(F \cap (E \setminus B))=0$ for every $F \in \calA^f$.
We define $Z = A \setminus B \in \calA$. 
Let $F \in \calA^f$ and notice once again that $\ind_{F \cap Z} \in L_1(X,\calA,\mu)^+$, therefore
\begin{equation*}
\int_X g \ind_{F \cap Z} d \mu = \alpha(\ind_{F \cap Z}) = \sup_{E \in \calE} \int_E \ind_{F \cap Z} d\mu.
\end{equation*}
Now given $E \in \calE$ we observe that $E \cap F \cap Z = F \cap E \cap (A \setminus B) \subset F \cap (E \setminus B) \in \calN_\mu$ by our assumption about $B$. 
Since $E \in \calE$ is arbitrary we infer that
\begin{equation*}
\int_X g \ind_{F \cap Z} d\mu = 0 \,.
\end{equation*}
It follows from the previous paragraph and the definition of $Z$ that $g > 0$, $\mu$ almost everywhere on $Z \cap F$.
Consequently $\mu(F \cap Z)=0$ and the proof is complete.
\end{proof}

\begin{Empty}
\label{dixmier}
Let $(X,\calA,\calN)$ be a measurable space with negligibles.
Here we recall that if $\bL_\infty(X,\calA,\calN)$ is isometrically isomorphic to a dual Banach space then $(X,\calA,\calN)$ is localizable -- indeed in this case $C(\sfSpec(\calA_{\calN}))$ is isometrically isomorphic to a dual Banach space according to \ref{LC}, whence it is isometrically injective \cite[4.3.8(i)]{ALBIAC.KALTON}, and it remains to recall \ref{abstract.loc}.
However if $(X,\calA,\calN)$ is localizable then $\bL_\infty(X,\calA,\calN)$ does not need to be isometrically isomorphic to a dual Banach space in general (see \cite[Problems 4.8 and 4.9]{ALBIAC.KALTON} for an example due to R. Dixmier), yet below we show the conditions are equivalent in the class of measurable spaces with negligibles of the type $(X,\calA,\calN_\mu[\calA^f])$ for some measure space $(X,\calA,\mu)$.
\end{Empty}

\begin{Proposition}
Let $(X,\calA,\mu)$ be a measure space.
The following are equivalent.
\begin{enumerate}
\item $(X,\calA,\calN_\mu[\calA^f])$ is localizable;
\item $\bL_\infty(X,\calA,\calN_\mu[\calA^f])$ is isometrically isomorphic to a dual Banach space.
\end{enumerate}
In this case $\bL_\infty(X,\calA,\calN_\mu[\calA^f])$ is isometrically isomorphic to $\bL_1(X,\calA,\mu)^*$.
\end{Proposition}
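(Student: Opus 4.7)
The plan is to introduce the canonical candidate isomorphism
\[
\tilde\Upsilon : \bL_\infty(X,\calA,\calN_\mu[\calA^f]) \to \bL_1(X,\calA,\mu)^*, \qquad \tilde\Upsilon(\bg)(\boldf) = \int_X g f \, d\mu,
\]
and to prove that (1) forces $\tilde\Upsilon$ to be an isometric isomorphism, so that (2) holds together with the explicit realization claimed in the final sentence, whereas (2)$\Rightarrow$(1) follows from the general Dixmier-type argument sketched in \ref{dixmier}. First I would check that $\tilde\Upsilon$ is well defined and $1$-Lipschitz: the support $\{f\ne 0\}$ of any $f\in L_1(X,\calA,\mu)$ is a countable union of the sets $\{|f|\geq 1/n\}\in\calA^f$, so any element of $\calN_\mu[\calA^f]$ hits it in a $\mu$-null set; this makes the integral depend only on the classes $\bg$ and $\boldf$, and, since $|g|\leq\|\bg\|_{\calN_\mu[\calA^f]}$ $\mu$-a.e.\ on $\{f\ne 0\}$, yields $|\tilde\Upsilon(\bg)(\boldf)|\leq\|\bg\|_{\calN_\mu[\calA^f]}\|\boldf\|_1$.

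For (1)$\Rightarrow$(2) I would invoke Proposition \ref{Riesz}: semilocalizability gives surjectivity of $\Upsilon$, and its proof actually constructs a representative $g$ bounded everywhere by $\|\balpha\|$ (thanks to the truncation by the set $Z$), so \emph{a fortiori} $\tilde\Upsilon$ is surjective. Injectivity of $\tilde\Upsilon$ is soft: if $\tilde\Upsilon(\bg)=0$, testing against $f=\ind_{F\cap\{\pm g>1/n\}}$ for $F\in\calA^f$ and $n\in\N^*$ forces $\{\pm g>1/n\}\cap F\in\calN_\mu$, and unioning over $n$ and $F\in\calA^f$ yields $\{g\ne 0\}\in\calN_\mu[\calA^f]$. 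The reverse norm bound $\|\bg\|_{\calN_\mu[\calA^f]}\leq\|\tilde\Upsilon(\bg)\|$ comes from the same kind of test function: if some $M<\|\bg\|_{\calN_\mu[\calA^f]}$ were to satisfy $\mu(F\cap\{|g|>M\})>0$ for some $F\in\calA^f$, picking $f=\ind_{F\cap\{g>M\}}$ (or the analogous set for $-g$) realizes $|\tilde\Upsilon(\bg)(\boldf)|\geq M\|\boldf\|_1$; letting $M\uparrow\|\bg\|_{\calN_\mu[\calA^f]}$ gives the equality of norms.

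For (2)$\Rightarrow$(1) I would transfer the hypothesis via Proposition \ref{LC}: if $\bL_\infty(X,\calA,\calN_\mu[\calA^f])$ is isometric to a dual Banach space, so is $C(\sfSpec(\calA_{\calN_\mu[\calA^f]}))$; by \cite[4.3.8(i)]{ALBIAC.KALTON} that space is then isometrically injective, and the equivalence (1)$\Leftrightarrow$(5) in \ref{abstract.loc} delivers localizability. The main obstacle is in fact the isometry step of (1)$\Rightarrow$(2): surjectivity is handed to us by \ref{Riesz} and injectivity is formal, but the equality of norms is precisely what justifies replacing the $\bL_\infty$ of \ref{Riesz} (built on $\calN_\mu$) with the strictly coarser quotient built on $\calN_\mu[\calA^f]$, and it is the only place where the delicate interplay between the two $\sigma$-ideals --- and the fact that $L_1$ functions are automatically supported on a $\sigma$-finite set --- genuinely enters.
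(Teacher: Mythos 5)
Your proof is correct and takes essentially the same route as the paper's: the canonical map into $\bL_1(X,\calA,\mu)^*$, surjectivity supplied by \ref{Riesz}, identification of the kernel with the classes vanishing off a locally null set, and the Goodner--Nachbin argument of \ref{dixmier} for $(2)\Rightarrow(1)$. The only difference is that you define the map directly on the quotient $\bL_\infty(X,\calA,\calN_\mu[\calA^f])$ and spell out the isometry computation that the paper leaves to the reader; those details are correct.
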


\begin{proof}
That $(2) \Rightarrow (1)$ follows from the general argument in \ref{dixmier}.
We henceforth assume that $(X,\calA,\calN_\mu[\calA^f])$ is localizable and we let $L_\infty(X,\calA)$ denote the linear space consisting of those {\it bounded}, $\calA$ measurable functions $g : X \to \R$.
In the exact same way as in \ref{upsilon} we define a linear map
\begin{equation*}
\hat{\Upsilon} : L_\infty(X,\calA) \to \bL_1(X,\calA,\mu)^* \,.
\end{equation*}
\par 
We claim that $\ker \hat{\Upsilon}$ consists of those $g \in L_\infty(X,\calA)$ such that $S_g = \{ g \neq 0 \} \in \calN_\mu[\calA^f]$.
If $g$ has this property and $f \in L_1(X,\calA,\mu)$ then $\{ f \neq 0 \} = \cup_{n \in \N^*} \{ n \leq |f| \}$ and since each $\{ n \leq |f| \} \in \calA^f$ it follows that $\{ gf \neq 0 \} = \{ g \neq 0 \} \cap \{ f \neq 0 \} \in \calN_\mu$ and in turn $\hat{\Upsilon}(g)(f) = \int_X gf d\mu =0$.
The other way around we let $g \in \ker \hat{\Upsilon}$ and we define $S^\pm = \{ \pm g > 0 \}$ so that $\{ g \neq 0 \} = S^+ \cup S^-$.
Given $A \in \calA^f$ and letting $f = \ind_{A \cap S^+}$ we infer that $0 = \hat{\Upsilon}(g)(f) = \int_A g^+ d\mu$ thus $S^+ \cap A \in \calN_\mu$. 
Thus $S^+ \in \calN_\mu[\calA^f]$, and similarly $S^- \in \calN_\mu[\calA^f]$.
\par
Since $ \bL_1(X,\calA,\mu)^*$ is linearly isomorphic to $L_\infty(X,\calA)/\ker \hat{\Upsilon}$,
the claim being established we now easily infer that $ \bL_1(X,\calA,\mu)^*$ is linearly isomorphic to $\bL_\infty(X,\calA,\calN_\mu[\calA^f])$.
It remains to show that the corresponding linear isomorphism associated with $\hat{\Upsilon}$ is an isometry.
We leave the details to the reader.
\end{proof}


\section{Almost Decomposable Measure Spaces}

In this Section we state basic facts on the notion of almost decomposable measure space introduced in \cite{DEP.98}.
It is an appropriate generalization to non semifinite measure spaces of the notion of decomposable measure space (also called strictly localizable measure space).
I learned \ref{CH.implies.ad} from \cite[2.5.10]{GMT} (in a different language than here).
I learned the idea in \ref{55} from \textsc{D.H. Fremlin}.

\begin{Empty}
\label{almost.decomposition}
Let $(X,\calA,\mu)$ be a measure space. An {\em almost decomposition} of $(X,\calA,\mu)$ is a family $\calG \subset \calA$ with the following properties:
\begin{enumerate}
\item $\forall G \in \calG : \mu(G) < \infty$;
\item $\calG$ is disjointed;
\item $\forall A \in \calP(X) : ( \forall G \in \calG : A \cap G \in \calA) \Rightarrow A \in \calA$;
\item $\forall A \in \calA : \mu(A) < \infty \Rightarrow \mu(A) = \sum_{G \in \calG} \mu(A \cap G)$.
\end{enumerate}
We say that $(X,\calA,\mu)$ is {\em almost decomposable} if it admits an almost decomposition.
\end{Empty}

\begin{Empty}
\label{52}
Almost decomposable measure spaces generalize $\sigma$-finite measure spaces.
In fact, assuming that $(X,\calA,\mu)$ is semifinite, if $\calG$ is an almost decomposition of $(X,\calA,\mu)$ and $\calG$ is (at most) countable then $\mu$ is $\sigma$-finite.
Indeed $S = \cup \calG \in \calA$ (either because $\calG$ is countable or according to \ref{almost.decomposition}(3)), and we ought to show that $\mu(X \setminus S) = 0$.
If $A \in \calA$, $\mu(A) < \infty$ and $A \subset X \setminus S$ then $\mu(A)=0$ according to \ref{almost.decomposition}(4).
Since $\mu$ is semifinite this implies $\mu(Z \setminus S)=0$.
\end{Empty}

\begin{Proposition}
\label{ad.implies.semiloc}
If a measure space admits an almost decomposition then it is semilocalizable.
\end{Proposition}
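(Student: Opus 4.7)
The plan is to glue together local essential suprema provided by the finite pieces of an almost decomposition, and then verify the two defining properties of an essential supremum relative to $\calN_\mu[\calA^f]$ by decomposing intersections with sets of finite measure along the decomposition.

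Let $\calG$ be an almost decomposition of $(X,\calA,\mu)$, and fix an arbitrary family $\calE\subset\calA$; we must produce an $\calN_\mu[\calA^f]$ essential supremum of $\calE$. For each $G\in\calG$, the measure space $(G,\calA_G,\mu_G)$ is finite, hence $\sigma$-finite, so by \ref{sigmafinite.localizable} the space $(G,\calA_G,\calN_{\mu_G})$ is localizable. Consequently the family $\{G\cap E:E\in\calE\}\subset\calA_G$ admits an $\calN_{\mu_G}$ essential supremum $A_G\in\calA_G$. Set
\begin{equation*}
A\;=\;\bigcup_{G\in\calG}A_G\,.
\end{equation*}
Since $\calG$ is disjointed and $A_G\subset G$, we have $A\cap G'=A_{G'}\in\calA$ for every $G'\in\calG$, and therefore $A\in\calA$ by property \ref{almost.decomposition}(3).

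Next I verify the essential supremum properties. Given $E\in\calE$ and $F\in\calA^f$, property \ref{almost.decomposition}(4) applied to $F\cap(E\setminus A)\in\calA^f$ yields
\begin{equation*}
\mu\bigl(F\cap(E\setminus A)\bigr)\;=\;\sum_{G\in\calG}\mu\bigl(G\cap F\cap(E\setminus A)\bigr)\,.
\end{equation*}
For each $G$, the set $G\cap F\cap(E\setminus A)$ is contained in $G\cap E\setminus A_G$, which is $\mu_G$ null by choice of $A_G$. Hence the sum vanishes, and since $F\in\calA^f$ was arbitrary, $E\setminus A\in\calN_\mu[\calA^f]$. Now suppose $B\in\calA$ satisfies $E\setminus B\in\calN_\mu[\calA^f]$ for every $E\in\calE$. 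Fix $F\in\calA^f$; again by \ref{almost.decomposition}(4),
\begin{equation*}
\mu\bigl(F\cap(A\setminus B)\bigr)\;=\;\sum_{G\in\calG}\mu\bigl(G\cap F\cap(A\setminus B)\bigr)\,.
\end{equation*}
For each fixed $G\in\calG$, the hypothesis applied with the test set $G\in\calA^f$ gives $\mu(G\cap(E\setminus B))=0$ for every $E\in\calE$, meaning $G\cap B$ is an $\calN_{\mu_G}$ essential upper bound of $\{G\cap E:E\in\calE\}$; the defining property of $A_G$ then forces $\mu_G(A_G\setminus(G\cap B))=0$. Since $A\cap G=A_G\subset G$, this gives $\mu(G\cap F\cap(A\setminus B))\leq\mu(A_G\setminus B)=0$, and summing over $G$ shows $A\setminus B\in\calN_\mu[\calA^f]$.

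The only mildly subtle point is checking that $A\in\calA$ despite $\calG$ possibly being uncountable, which is exactly what clause \ref{almost.decomposition}(3) of the almost decomposition is designed to handle; the additivity clause \ref{almost.decomposition}(4) then replaces $\sigma$-additivity in reducing global $\calN_\mu[\calA^f]$ statements to local $\calN_{\mu_G}$ statements. No further difficulty arises, and the proof of \ref{ad.implies.semiloc} is complete once these verifications are assembled.
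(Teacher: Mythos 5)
Your proof is correct and follows essentially the same route as the paper's: take the $\calN_{\mu_G}$ essential suprema $A_G$ on the finite pieces, form $A=\cup_G A_G$, use clause (3) of the almost decomposition for measurability of $A$, and use clause (4) to reduce both essential-supremum properties to the local statements on each $G$. The only cosmetic difference is that you test the hypothesis on $B$ directly against $G\in\calA^f$ and phrase the conclusion via ``$G\cap B$ is an essential upper bound of $\calE_G$,'' which is exactly the paper's step $\mu(A_G\setminus B_G)=0$ in slightly different words.
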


\begin{proof}
Let $\calG$ be an almost decomposition of the measure space $(X,\calA,\mu)$ and let $\calE \subset \calA$ be an arbitrary family.
With each $G \in \calG$ we associate $\calE_G = \{ G \cap E : E \in \calE \} \subset \calA_G$.
Since $(G,\calA_G,\mu_G)$ is a finite measure space, $(G,\calA_G,\calN_{\mu_G})$ is localizable according to \ref{sigmafinite.localizable} and we let $A_G \in \calA_G \subset \calA$ be an $\calN_{\mu_G}$ essential supremum of $\calE_G$.
We now define a subset of $X$
\begin{equation*}
A = \cup_{G \in \calG} A_G \,.
\end{equation*}
Since $A \cap G = A_G \in \calG$ for every $G \in \calG$ it follows from condition (3) of the definition of an almost decomposition that $A \in \calA$.
\par 
We shall now show that $E \setminus A \in \calN_\mu[\calA^f]$ for every $E \in \calE$.
Let $E \in \calE$ and $F \in \calA^f$.
It follows that
\begin{equation*}
\begin{split}
\mu [ F \cap (E \setminus A) ] & = \sum_{G \in \calG} \mu [ F \cap G \cap (E \setminus A) ] \quad\quad \text{(by \ref{almost.decomposition}(4))}\\
& \leq \sum_{G \in \calG} \mu [ (E \cap G) \setminus A_G  ] \\
& = 0 \,.
\end{split}
\end{equation*}
\par 
Finally we assume that $B \in \calA$ is such that $E \setminus B \in \calN_\mu[\calA^f]$ for all $E \in \calE$ and we ought to show that $A \setminus B \in \calN_\mu[\calA^f]$.
Let $F \in \calA^f$.
We must show that $\mu [ F \cap (A \setminus B) ] = 0$.
Notice that
\begin{equation*}
\begin{split}
\mu[F \cap (A \setminus B) ] & = \sum_{G \in \calG} \mu [ F \cap G \cap (A \setminus B) ] \quad\quad \text{(by \ref{almost.decomposition}(4))} \\
& \leq \sum_{G \in \calG} \mu [G \cap (A \setminus B) ] \,.
\end{split}
\end{equation*}
Thus it suffices to establish that $\mu[G \cap (A \setminus B)]=0$ for each $G \in \calG$.
Fix $G \in \calG$ and let $B_G = B \cap G$.
Thus $\mu [(E \cap G) \setminus B_G] = \mu[ G \cap (E \setminus B) ]=0$ for every $E \in \calE$.
Therefore $\mu(A_G \setminus B_G) = 0$.
Since $A_G \setminus B_G = (A \cap G) \setminus (B \cap G) = G \cap (A \setminus B)$ the proof is complete.
\end{proof}

\begin{Proposition}[$\mathsf{ZFC + CH}$]
\label{CH.implies.ad}
Let $X$ be a Polish space and let $\phi$ be a Borel regular outer measure on $X$ (recall \ref{24}).
Assuming the Continuum Hypothesis, the measure space $(X,\calA_{\phi},\phi)$ admits an almost decomposition.
\end{Proposition}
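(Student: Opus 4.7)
\emph{Proof Plan.} The strategy is a transfinite construction of length $\omega_1$, exploiting the Continuum Hypothesis together with the earlier statement \ref{63} bounding the number of equivalence classes of $\phi$-measurable sets modulo $\phi$-null sets by $\aleph_1$, combined with Borel regularity to produce a disjointed family of Borel sets of finite $\phi$-measure.

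First I would invoke \ref{63} together with \textsf{CH} to enumerate a set of representatives $(A_\alpha)_{\alpha<\omega_1}$ of $\calA_\phi^f/\calN_\phi$, and then apply Borel regularity (\ref{24}) to replace each $A_\alpha$ by a Borel hull $B_\alpha\subset X$ of finite $\phi$-measure. By transfinite recursion I then set
\begin{equation*}
G_\alpha := B_\alpha \setminus \bigcup_{\beta<\alpha} B_\beta, \qquad \alpha<\omega_1.
\end{equation*}
Since every $\alpha<\omega_1$ is countable, each $G_\alpha$ is Borel and has $\phi(G_\alpha)\leq\phi(B_\alpha)<\infty$; by construction the family $\calG:=\{G_\alpha : \alpha<\omega_1\}$ is disjointed. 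A short induction shows that $B_\alpha\subset \bigcup_{\delta\leq\alpha} G_\delta$ for every $\alpha<\omega_1$, which is the key geometric fact underlying everything else.

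To verify condition \ref{almost.decomposition}(4), let $A\in\calA_\phi^f$ and pick $\alpha$ with $A\triangle B_\alpha\in\calN_\phi$. The inductive inclusion gives $A\setminus\bigcup_{\delta\leq\alpha}G_\delta\subset A\setminus B_\alpha\in\calN_\phi$, while for every $\delta>\alpha$ we have $G_\delta\cap B_\alpha=\emptyset$, so $A\cap G_\delta\subset A\setminus B_\alpha$ is $\phi$-null. Countable additivity on the disjointed family $\{G_\delta : \delta\leq\alpha\}$ then yields $\phi(A)=\sum_{G\in\calG}\phi(A\cap G)$.

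The main technical point will be condition \ref{almost.decomposition}(3). Suppose $A\subset X$ satisfies $A\cap G\in\calA_\phi$ for every $G\in\calG$; I would check Carath\'eodory's criterion against an arbitrary $S\subset X$ with $\phi(S)<\infty$. Taking a Borel hull $B$ of $S$ and using the preceding paragraph, $B\setminus\bigcup_{\delta\leq\alpha}G_\delta\in\calN_\phi$ for some $\alpha$; hence, modulo a $\phi$-null set, $A\cap B$ equals the countable disjoint union $\bigcup_{\delta\leq\alpha}(A\cap G_\delta)\cap B$ of measurable sets, and is therefore $\phi$-measurable. Then
\begin{equation*}
\phi(S\cap A)+\phi(S\setminus A)\leq\phi(B\cap A)+\phi(B\setminus A)=\phi(B)=\phi(S),
\end{equation*}
establishing $A\in\calA_\phi$. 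The delicate point here — and the one I expect to require the most care — is that at limit stages the union $\bigcup_{\beta<\alpha}B_\beta$ must remain Borel; this is exactly why the enumeration length $\omega_1$ (forced by \textsf{CH}) is essential, since each proper initial segment is countable.
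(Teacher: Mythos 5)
Your proof is correct and follows essentially the same route as the paper: a transfinite enumeration of length $\omega_1$ (so that proper initial segments are countable, which is where $\mathsf{CH}$ enters), disjointification via $G_\alpha = B_\alpha\setminus\bigcup_{\beta<\alpha}B_\beta$, and the same verification of conditions (3) and (4) of \ref{almost.decomposition}; the paper merely enumerates all of $\calB(X)\cap\{B:\phi(B)<\infty\}$ rather than one Borel hull per class of $\calA_\phi^f/\calN_\phi$. One small correction: the bound $\rmcard\left(\calA_\phi^f/\calN_\phi\right)\leq\cac$ is not what \ref{63} states (that result concerns disjointed families); it follows instead from Borel regularity --- every class contains a Borel hull --- together with $\rmcard\calB(X)\leq\cac$, which is exactly the fact you use anyway.
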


\begin{proof}
In case $X$ is finite the conclusion clearly holds.
We henceforth assume $X$ is infinite.
In that case $\rmcard \calB(X) = 2^{\aleph_0}$ (the upper bound follows from the fact that Borel sets are Suslin, and Suslin sets are continuous images of closed subsets of a particular Polish space, the Baire space, see e.g. \cite[3.3.18]{SRIVASTAVA}).
We abbreviate $\calB = \calB(X)$ and as usual $\calB^f = \calB \cap \{ B : \phi(B) < \infty \}$.
It now follows from the Continuum Hypothesis that $\calB^f$ admits a well-ordering $\preccurlyeq$ such that every proper initial segment $\calB_B^f = \calB^f \cap \{ C : C \preccurlyeq B \text{ and } C \neq B \}$, $B \in \calB^f$, is at most countable and therefore $\cup \calB_B^f \in \calB$.
With each $B \in \calB^f$ we associate the Borel set $G_B = B \setminus \cup \calB_B^f$.
We claim that $\calG = \{ G_B : B \in \calB^f\}$ is an almost decomposition of $(X,\calA_{\phi},\phi)$.
Conditions (1) and (2) of \ref{almost.decomposition} are readily satisfied.
\par 
In order to check that \ref{almost.decomposition}(3) holds, we let $A \subset X$ be such that $A \cap G$ is $\phi$-measurable for each $G \in \calG$ and we ought to show that $A$ is $\phi$-measurable. 
Let $S \subset X$ be arbitrary. 
We must establish that
\begin{equation*}
\phi(S) \geq \phi(S \cap A) + \phi(S \setminus A) \,.
\end{equation*}
Clearly we may assume that $\phi(S) < \infty$.
We choose $\calB^f \ni B \supset S$ with $\phi(S)=\phi(B)$.
Notice that $B \subset \cup \{ G_C : C \in \calB^f \text{ and } C \preccurlyeq B \}$.
Indeed with each $x \in B$ we associate $C(x) = \min \calB^f \cap \{ C : x \in C \}$ so that $C(x) \preccurlyeq B$ and $x \in G_{C(x)}$.
We now number $G_0,G_1,G_2,\ldots$ the sets $G_C$ corresponding to $C \in \calB^f$ with $C \preccurlyeq B$.
Thus $(G_n)_{n \in \N}$ is a disjointed sequence of Borel sets whose union contains $B$, whence $B = \cup_{n \in \N} B \cap G_n$.
In turn $B \cap A = \cup_{n \in \N} B \cap (G_n \cap A)$ is $\phi$-measurable according to our hypothesis about $A$.
Therefore $B \setminus A = B \cap (B^c \cup A^c) = B \cap (B \cap A)^c$ is also $\phi$-measurable, whence
\begin{equation*}
\phi(S) = \phi(B) = \phi(B \cap A) + \phi(B \setminus A) \geq \phi(S \cap A) + \phi(S \setminus A) \,
\end{equation*}
and the proof of (3) is complete.
\par 
We turn to proving that condition \ref{almost.decomposition}(4) holds.
Let $A \subset X$ be $\phi$-measurable and such that $\phi(A) < \infty$.
Owing to the Borel regularity of $\phi$ there exists $\calB^f \ni B \supset A$ such that $\phi(A)=\phi(B)$.
Associate $(G_n)_{n \in \N}$ with $B$ as above.
It follows that $A = \cup_{n \in \N} A \cap G_n$ and of course each $A \cap G_n$ is $\phi$-measurable.
Therefore
\begin{equation*}
\phi(A) = \sum_{n \in \N} \phi(A \cap G_n) \,.
\end{equation*}
Furthermore if $C \in \calB^f$, $C \neq B$ and $B \preccurlyeq C$ then $A \cap G_C \subset B \cap G_C = \emptyset$ and therefore $\phi(A \cap G_C)=0$.
Consequently
\begin{equation*}
\phi(A) = \sum_{G \in \calG} \phi(A \cap G) \,.
\end{equation*}
\end{proof}

\begin{Proposition}
\label{55}
Assume $X$ is a Polish space and $\mu$ a Borel measure in $X$.
If the measure space $(X,\calB(X),\mu)$ is semifinite and almost decomposable then it is $\sigma$-finite.
\end{Proposition}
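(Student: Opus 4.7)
The plan is to use the restrictive nature of condition \ref{almost.decomposition}(3) together with the countable generation of the Borel $\sigma$-algebra of a Polish space.

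First I would fix an almost decomposition $\calG$ and reduce to showing that $\calG^* := \calG \cap \{G : \mu(G) > 0\}$ is at most countable. Granted this, both $S = \bigcup \calG^*$ and $X \setminus S$ lie in $\calB(X)$ by condition \ref{almost.decomposition}(3) (since $S \cap G = G$ when $G \in \calG^*$ and $S \cap G = \emptyset$ otherwise). For any $F \in \calB(X)^f_\mu$ contained in $X \setminus S$, condition \ref{almost.decomposition}(4) gives $\mu(F) = \sum_{G \in \calG} \mu(F \cap G) = 0$ (terms over $G \in \calG^*$ vanish since $F \cap G = \emptyset$, and terms over $G \in \calG \setminus \calG^*$ vanish since $\mu(G) = 0$); by semifiniteness this forces $\mu(X \setminus S) = 0$. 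Since $S$ is then a countable union of finite-measure Borel sets, $\mu$ is $\sigma$-finite, as in \ref{52}.

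Second, to establish $|\calG^*| \leq \aleph_0$, I would exploit the injection $\Psi : \mathcal{P}(\calG^*) \hookrightarrow \calB(X)$ given by $\Psi(\calS) = A_{\calS} := \bigcup_{G \in \calS} G$. Its well-definedness is ensured by condition \ref{almost.decomposition}(3) (noting $A_{\calS} \cap G \in \{\emptyset, G\}$ for every $G \in \calG$), and its injectivity follows from the pairwise disjointness and nonemptiness of the members of $\calG^*$. Combined with the bound $|\calB(X)| \leq 2^{\aleph_0}$, valid for any Polish $X$ because $\calB(X)$ is countably generated (as used in the proof of \ref{CH.implies.ad}), this yields $2^{|\calG^*|} \leq 2^{\aleph_0}$.

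The main obstacle is that deducing $|\calG^*| \leq \aleph_0$ from the cardinal inequality $2^{|\calG^*|} \leq 2^{\aleph_0}$ is not possible by Cantor's theorem alone (which merely gives $|\calG^*| < 2^{\aleph_0}$); indeed $2^{\aleph_1} = 2^{\aleph_0}$ is consistent with $\mathsf{ZFC}$. To bridge this gap I would exploit the standard Borel structure of $\calB(X)$ rather than just its cardinality. Specifically, assuming $|\calG^*| > \aleph_0$, I would attempt to produce a non-Borel subset $T \subset X$ satisfying $T \cap G \in \calB(X)$ for every $G \in \calG$, by choosing representatives $x_G \in G$ in a diagonal or Bernstein-style manner against the $2^{\aleph_0}$-many candidate Borel sets; then condition \ref{almost.decomposition}(3) would force $T \in \calB(X)$, a contradiction. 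Arranging such a choice of representatives — in particular handling possible atoms of $\mu$ and ensuring that $T = \{x_G : G \in \calG^*\}$ avoids every Borel subset of $X$ of the appropriate cardinality — is the technical heart of the proof.
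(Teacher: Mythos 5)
Your first reduction (to showing that $\calG^* = \calG \cap \{G : \mu(G) > 0\}$ is at most countable) is correct, and you have honestly identified the gap in the second half: the injection $\calP(\calG^*) \hookrightarrow \calB(X)$ only yields $2^{\rmcard \calG^*} \leq \cac$, which does not force $\rmcard\calG^* \leq \aleph_0$ since $2^{\aleph_1} = 2^{\aleph_0}$ is consistent with $\mathsf{ZFC}$. The missing idea is not a Bernstein-style diagonalization but the perfect set property: an uncountable Borel (indeed Suslin) subset of a Polish space has cardinality exactly $\cac$. The paper's proof runs as follows. If $\mu$ is not $\sigma$-finite then $\calG$ is uncountable by \ref{52}; by the axiom of choice pick a transversal $A \subset X$ with $A \cap G$ a singleton for every $G \in \calG$. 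Condition \ref{almost.decomposition}(3) forces $A \in \calB(X)$, hence $\rmcard A = \rmcard \calG = \cac$ by the perfect set property. Moreover \emph{every} $B \subset A$ meets each $G$ in at most one point, so $B \in \calB(X)$ by \ref{almost.decomposition}(3) again; therefore $2^{\cac} = \rmcard \calP(A) \leq \rmcard \calB(X) = \cac$, an outright contradiction with Cantor's theorem. The decisive difference from your argument is that one injects $\calP(A)$ for a \emph{transversal} $A$ rather than $\calP(\calG^*)$ via unions: the transversal is a Borel set whose cardinality can be pinned down to $\cac$, which upgrades the harmless inequality $2^{\rmcard\calG^*} \leq \cac$ to the impossible $2^{\cac} \leq \cac$.

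Your proposed repair would also be harder to carry out than you suggest. If $\aleph_0 < \rmcard \calG^* < \cac$, then \emph{any} transversal of $\calG^*$ is automatically non-Borel by the perfect set property, so no construction is needed at all; and in the remaining case $\rmcard \calG^* = \cac$, a recursion choosing $x_G \in G$ against all $\cac$ Borel sets has to cope with the constraint that only one point per $G$ is available and that some $G$ may be a single atom, so the Bernstein-type argument does not go through verbatim. Counting $\calP(A)$ for an arbitrary transversal $A$ sidesteps all of this in one stroke.
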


\begin{proof}
Assume if possible that $(X,\calB(X),\mu)$ is semifinite and almost decomposable but not $\sigma$-finite.
Letting $\calG$ be an almost decomposition of $\calG$ it would ensue from \ref{52} that $\calG$ is uncountable.
Let $\bkappa = \rmcard \calG$.
It follows from the axiom of choice that there exists $A \subset X$ such that $A \cap G$ is a singleton for each $G \in \calG$.
Thus $\rmcard A = \bkappa$, and $A \in \calB(X)$ according to \ref{almost.decomposition}(3). 
Now, $A$ being an uncountable Suslin subset of a Polish space, $\bkappa = \rmcard A = \cac$, see e.g. \cite[4.3.5]{SRIVASTAVA}.
Furthermore if $B \in \calP(A)$ then for each $G \in \calG$ the set $B \cap G$ is either empty or a singleton, therefore $B \in \calB(X)$ as follows from \ref{almost.decomposition}(3). 
Consequently $2^{\cac} = 2^{\bkappa} = \rmcard \calP(A) \leq \rmcard \calB(X)  = \cac$ (where the last equality was already recalled at the beginning of the proof of \ref{CH.implies.ad}), in contradiction with G. Cantor's Theorem that $\cac < 2^{\cac}$.
\end{proof}

\begin{Corollary}
\label{56}
Let $X$ be an uncountable separable complete metric space, and $0 < d < \infty$. 
It follows that either the measure space $(X,\calB(X),\calH^d)$ is $\sigma$-finite or it is not almost decomposable.
\end{Corollary}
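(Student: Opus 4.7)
The plan is to read the corollary as the contrapositive of a direct application of Proposition~\ref{55}: namely, if $(X,\calB(X),\calH^d)$ is semifinite and almost decomposable, then it is $\sigma$-finite. So the only thing to check before invoking Proposition~\ref{55} with $\mu = \calH^d$ is that $(X,\calB(X),\calH^d)$ is semifinite in the sense that every Borel set of infinite $\calH^d$ measure contains a Borel subset of positive finite measure.

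Semifiniteness of $\calH^d$ on the Borel $\sigma$-algebra of a complete separable metric space is precisely item~(1) recalled in the foreword of this paper: it was proved by \textsc{R.O. Davies} in the case $X=\R^n$ and in general by \textsc{J. Howroyd}. I would simply cite this as a black box; nothing new needs to be proved here. Once semifiniteness is secured, Proposition~\ref{55} immediately yields that almost decomposability of $(X,\calB(X),\calH^d)$ forces $\sigma$-finiteness, and the contrapositive is exactly the stated dichotomy.

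The uncountability hypothesis on $X$ plays no essential role in the deduction itself; it enters only indirectly through the use of Cantor's theorem inside the proof of Proposition~\ref{55}, which exploits the fact that an uncountable Suslin set has cardinal $\mathfrak{c}$. In the trivial case where $X$ is countable, $\calH^d(X)=0$ for the given range $0<d<\infty$ and there is nothing to prove. Retaining uncountability in the statement merely records the only regime in which the dichotomy is informative, namely the one where $\calH^d$ can plausibly fail to be $\sigma$-finite.

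There is no substantive obstacle: once Proposition~\ref{55} is available and the Davies--Howroyd semifiniteness theorem is invoked, the corollary follows in one line. The only modeling choice is stylistic — whether to spell out the citation for Howroyd's theorem or to refer back to item~(1) of the foreword.
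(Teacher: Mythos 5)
Your proposal is correct and is exactly the paper's argument: the proof of Corollary~\ref{56} consists of the single observation that Proposition~\ref{55} applies because $(X,\calB(X),\calH^d)$ is semifinite by Howroyd's theorem (\cite{HOW.95}, or \cite[471S]{FREMLIN.IV}). Your side remarks about the role of uncountability and the countable case are accurate but not needed for the deduction.
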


\begin{proof}
Indeed \ref{55} applies because $(X,\calB(X),\calH^d)$ is semifinite according to \textsc{J. Howroyd}'s Theorem \cite{HOW.95} or \cite[471S]{FREMLIN.IV}.
\end{proof}

\section{Locally Determined Measure Spaces of Magnitude less than Continuum}

Almost decomposable measure spaces are semilocalizable, \ref{ad.implies.semiloc} but the converse does not hold.
A classical counter-example (in case of semifinite measure spaces) is due to \textsc{D.H. Fremlin} \cite[216E]{FREMLIN.II}.
However if the corresponding quotient Boolean algebra is <<not too large>> the converse holds.
In case of semifinite measure spaces this is due to {\sc E. J. McShane} \cite{MCS.62}.
Here we deal with the non semifinite case, \ref{mcshane}.

\begin{Empty}
\label{61}
A measure space $(X,\calA,\mu)$ is called {\em locally determined} whenever the following holds:
\begin{equation*}
\forall A \in \calP(X) : \left[ \forall F \in \calA^f : A \cap F \in \calA \right] \Rightarrow A \in \calA 
\end{equation*}
where as usual 
\begin{equation*}
\calA^f = \calA \cap \{ A : \mu(A) < \infty \} \,.
\end{equation*}
\end{Empty}

 \begin{Lemma}
 \label{62}
Let $\phi$ be an outer measure on a set $X$ and assume that $\phi$ has measurable hulls, i.e.
\begin{equation*}
\left( \forall S \in \calP(X) \right) \left( \exists A \in \calA_\phi \right) : S \subset A \text{ and } \phi(S) = \phi(A) \,. 
\end{equation*}
It follows that $(X,\calA_\phi,\phi)$ is locally determined.
\end{Lemma}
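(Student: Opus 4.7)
The plan is to verify the Carathéodory criterion directly. Let $A \in \calP(X)$ satisfy $A \cap F \in \calA_\phi$ for every $F \in \calA_\phi^f$, and let $S \subset X$ be an arbitrary test set. We need to show that
\begin{equation*}
\phi(S) \geq \phi(S \cap A) + \phi(S \setminus A) \,.
\end{equation*}
We may clearly assume $\phi(S) < \infty$, since otherwise the inequality is trivial.

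The key idea is to use the measurable hulls hypothesis to replace $S$ by a well-behaved measurable set. Apply the hypothesis to get $F \in \calA_\phi$ with $S \subset F$ and $\phi(F) = \phi(S) < \infty$. Then $F \in \calA_\phi^f$, so by the assumption on $A$ we obtain $A \cap F \in \calA_\phi$. Since $A \cap F$ is $\phi$-measurable and $(A \cap F) \subset F$, applying the Carathéodory splitting of $F$ by $A \cap F$ gives
\begin{equation*}
\phi(F) = \phi(F \cap A) + \phi(F \setminus A) \,.
\end{equation*}
Finally, using monotonicity together with the inclusions $S \cap A \subset F \cap A$ and $S \setminus A \subset F \setminus A$, together with $\phi(F) = \phi(S)$, we conclude
\begin{equation*}
\phi(S) = \phi(F) = \phi(F \cap A) + \phi(F \setminus A) \geq \phi(S \cap A) + \phi(S \setminus A) \,,
\end{equation*}
which shows $A \in \calA_\phi$, as required.

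There is no real obstacle here; the argument is routine. The only subtle point is that one must be careful to note that the measurable hull $F$ actually belongs to $\calA_\phi^f$ (and not merely to $\calA_\phi$), which is why the reduction to $\phi(S) < \infty$ at the start is essential: otherwise the hypothesis $A \cap F \in \calA_\phi$ cannot be invoked.
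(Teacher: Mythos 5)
Your proof is correct and follows the same route as the paper's: test with an arbitrary $S$ of finite measure, take a measurable hull $F \in \calA_\phi^f$, use the hypothesis to get $A \cap F \in \calA_\phi$, split $\phi(F)$ along $A$, and finish by monotonicity. The only cosmetic difference is that the paper splits $F$ by noting $F \setminus A = F \setminus (A\cap F) \in \calA_\phi$ explicitly, whereas you phrase the same computation as the Carath\'eodory splitting of $F$ by $A \cap F$; these are identical.
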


\begin{proof}
Let $A \in \calP(X)$ and assume that $A \cap F$ is $\phi$ measurable whenever $F$ is $\phi$ measurable and $\phi(F) < \infty$. 
We ought to show that $A$ is $\phi$ measurable.
It suffices to establish that 
\begin{equation*}
\phi(S) \geq \phi(S \cap A ) + \phi(S \setminus A)
\end{equation*}
whenever $S \in \calP(X)$ and $\phi(S) < \infty$. 
Let $B \in \calA_\phi$ be a $\phi$ measurable hull of $S$.
Thus $B \in \calA_\phi^f$ so that $A \cap B \in \calA_\phi$ by assumption, and hence also $B \setminus A = B \setminus (A \cap B) \in \calA_\phi$. Therefore
\begin{equation*}
\phi(S) = \phi(B) = \phi(B \cap A) + \phi(B \setminus A) \geq \phi(S \cap A) + \phi(S \setminus A) 
\end{equation*}
and the proof is complete.
\end{proof}

\begin{Proposition}
\label{63}
Assume that $X$ is a Polish space and that $\phi$ is a Borel regular outer measure on $X$ (recall \ref{24}).
It follows that $(X,\calA_\phi,\phi)$ has magnitude (recall \ref{magnitude}) less than $\cac$ (the power of continuum).
\end{Proposition}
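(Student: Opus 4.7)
The plan is to bound $\rmcard \calE$ for an arbitrary family $\calE \subset \calA_\phi^f$ satisfying the two conditions of \ref{magnitude} (relative to $\calN_\phi$ and the ideal $\calA_\phi^f$), by constructing an injection of $\calE$ into the Boolean algebra of Borel sets modulo $\phi$-null sets, i.e. into $\calB(X)/(\calB(X) \cap \calN_\phi)$. The cardinality of this target is at most $\rmcard \calB(X) \leq 2^{\aleph_0} = \cac$, by the standard Borel cardinality bound already invoked in the proof of \ref{CH.implies.ad}.

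The first step is to replace each $E \in \calE$ by a Borel ``kernel'' $C_E \subset E$ with $\phi(E \setminus C_E) = 0$, so that in particular $\phi(C_E) = \phi(E) > 0$. I would construct $C_E$ by applying Borel regularity twice: first select a Borel hull $B_E \supset E$ with $\phi(B_E) = \phi(E)$, which by $\phi$-measurability and finiteness of $E$ yields $\phi(B_E \setminus E) = 0$; then select a Borel null hull $B'_E \supset B_E \setminus E$ with $\phi(B'_E) = 0$; and finally set $C_E := B_E \setminus B'_E \in \calB(X)$. One checks readily that $C_E \subset E$ and $\phi(E \setminus C_E) \leq \phi(B'_E) = 0$. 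It is here, and only here, that we use $\phi(E) < \infty$, which is why the magnitude bound is naturally stated relative to $\calA_\phi^f$.

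The second step is injectivity of the map $\Psi : \calE \to \calB(X)/(\calB(X) \cap \calN_\phi)$ defined by $\Psi(E) = [C_E]$. For distinct $E_1, E_2 \in \calE$ condition \ref{magnitude}(2) gives $\phi(E_1 \cap E_2) = 0$, hence $\phi(C_{E_1} \cap C_{E_2}) = 0$. If one had $[C_{E_1}] = [C_{E_2}]$, then $C_{E_1} \triangle C_{E_2} \in \calN_\phi$ would force $\phi(C_{E_1}) = \phi(C_{E_1} \cap C_{E_2}) = 0$, contradicting $\phi(C_{E_1}) = \phi(E_1) > 0$. The conclusion $\rmcard \calE \leq \cac$ then follows.

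I do not anticipate any real obstacle; the argument is a fairly standard folklore observation for Borel regular measures on Polish spaces, and all measure-theoretic manipulations boil down to the Borel regularity axiom \ref{24} combined with $\phi$-measurability. The only subtle point to emphasize is that finiteness of the measure of each $E$ is essential for the extraction of a Borel kernel with the same measure class, which is why the argument naturally lives at the level of the ideal $\calA_\phi^f$.
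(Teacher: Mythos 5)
Your proof is correct and follows essentially the same route as the paper's: both use Borel regularity to attach to each $E \in \calE$ a Borel set agreeing with $E$ up to a $\phi$-null set and then conclude from $\rmcard \calB(X) \leq \cac$. The only (harmless) difference is that you pass to a kernel $C_E \subset E$ and inject into the quotient algebra $\calB(X)/(\calB(X)\cap\calN_\phi)$, whereas the paper injects $E \mapsto B_E$ directly into $\calB(X)$, noting that $B_{E_1}=B_{E_2}$ would force $B_{E_1}$, hence $E_1$, to be $\phi$-negligible.
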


\begin{proof}
Let $\calE \subset \calA^f$ be as in \ref{magnitude}.
With each $E \in \calE$ we associate a Borel hull $B_E \in \calB(X)$ such that $E \subset B_E$ and $\phi(E)=\phi(B_E)$. 
Since $\phi(B_E) < \infty$ and both $E$ and $B_E$ are $\phi$ measurable, we infer that $\phi(B_E \setminus E)=0$. 
We now claim that if $E_1,E_2 \in \calE$ and $E_1 \neq E_2$ then $B_{E_1} \neq B_{E_2}$.
Indeed, assuming $E_1 \neq E_2$, we see that
\begin{equation*}
B_{E_1} \cap B_{E_2} \subset  (B_{E_1} \setminus E_1) \cup (E_1 \cap B_{E_2}) \subset (B_{E_1} \setminus E_1) \cup (E_1 \cap E_2) \cup ( B_{E_2} \setminus E_2) 
\end{equation*}
is $\phi$ negligible. 
Assuming if possible that $B_{E_1} = B_{E_2}$ it would ensue that $B_{E_1}$ is $\phi$ negligible, whence also $E_1$, a contradiction.
In other words the map $\calE \to \calB(X) : E \mapsto B_E$ is injective.
Since $\rmcard \calB(X) \leq \cac$ the proof is complete.
\end{proof}

\begin{Proposition}
\label{416}
Let $(X,\calA,\mu)$ be a measure space which is complete and locally determined, and let $\calE$ be such that
\begin{enumerate}
\item $\calE \subset \calA^f$;
\item $\calE$ is disjointed;
\item $( \forall A \in \calA^f \setminus \calN_\mu)(\exists E \in \calE) : A \cap E \not \in \calN_\mu$.
\end{enumerate}
It follows that $\calE$ is an almost decomposition of $(X,\calA,\mu)$. 
\end{Proposition}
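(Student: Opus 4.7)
The plan is to verify the four defining conditions of an almost decomposition. Conditions (1) and (2) of \ref{almost.decomposition} coincide with hypotheses (1) and (2), so only conditions (3) and (4) require work. The central observation that makes both go through is the following: for every $F \in \calA^f$, the subfamily
\begin{equation*}
\calE_F = \calE \cap \{ E : \mu(F \cap E) > 0 \}
\end{equation*}
is at most countable. Indeed, letting $\calE_{F,n} = \calE \cap \{ E : \mu(F \cap E) \geq 1/n \}$, the disjointedness of $\calE$ forces $\rmcard \calE_{F,n} \leq n\mu(F) < \infty$, and $\calE_F = \bigcup_{n \in \N^*} \calE_{F,n}$.

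For condition (3) of \ref{almost.decomposition}, let $A \in \calP(X)$ satisfy $A \cap E \in \calA$ for every $E \in \calE$. Since $(X,\calA,\mu)$ is locally determined, it suffices to prove $A \cap F \in \calA$ for each $F \in \calA^f$. Fix such an $F$, set $F_1 = \bigcup_{E \in \calE_F} (F \cap E)$ which lies in $\calA$ by countability, and put $F_2 = F \setminus F_1 \in \calA$. I claim $\mu(F_2) = 0$. Indeed, otherwise $F_2 \in \calA^f \setminus \calN_\mu$, so hypothesis (3) yields $E \in \calE$ with $\mu(F_2 \cap E) > 0$; but then $\mu(F \cap E) \geq \mu(F_2 \cap E) > 0$ forces $E \in \calE_F$, which contradicts $F_2 \cap E \subset F_2 \cap F_1^c \cap E = \emptyset$. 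Since $(X,\calA,\mu)$ is complete and $\mu(F_2)=0$, we get $A \cap F_2 \in \calA$; also $A \cap F_1 = \bigcup_{E \in \calE_F} (A \cap E) \cap F \in \calA$ by countability and the hypothesis on $A$. Hence $A \cap F = (A \cap F_1) \cup (A \cap F_2) \in \calA$, as required.

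For condition (4) of \ref{almost.decomposition}, let $A \in \calA^f$ and apply the same decomposition with $F$ replaced by $A$: writing $A = A_1 \cup A_2$ with $A_1 = \bigcup_{E \in \calE_A}(A \cap E)$ (a disjointed countable union) and $\mu(A_2)=0$, we obtain
\begin{equation*}
\mu(A) = \mu(A_1) = \sum_{E \in \calE_A} \mu(A \cap E) = \sum_{E \in \calE} \mu(A \cap E),
\end{equation*}
the last equality because the terms dropped are zero by definition of $\calE_A$. There is no real obstacle here; the only substantive ingredient is the countability of $\calE_F$, which in turn relies crucially on hypothesis (1) (finite $\mu$-measure of each $E$) together with disjointedness, and the completeness assumption is invoked only at the single point where the null set $A \cap F_2$ must be recognized as measurable.
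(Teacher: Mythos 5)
Your proof is correct and follows essentially the same route as the paper's: the countability of $\calE_F$ via the subfamilies $\calE_{F,n}$, the decomposition of $F$ into a countable union of the sets $F \cap E$ plus a null remainder (whose nullity is forced by hypothesis (3)), completeness to absorb the intersection of $A$ with that null remainder, and local determinedness to conclude. The only difference is cosmetic — you treat condition (3) before condition (4), whereas the paper establishes (4) first and reuses the decomposition for (3).
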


\begin{proof}
We start by proving that condition (4) of \ref{almost.decomposition} is satisfied.
Let $A \in \calA^f$.
If $A \in \calN_\mu$ there is noting to prove, thus we henceforth assume that $\mu(A) > 0$.
Define
\begin{equation*}
\calE_A = \calE \cap \{ E : E \cap A \not \in \calN_\mu \} \,.
\end{equation*}
We first claim that $\calE_A$ is at most countable.
Indeed if $\calF \subset \calE_A$ is finite then 
\begin{equation*}
\sum_{E \in \calF} \mu(E \cap A) = \mu \left( (\cup \calF) \cap A \right) \leq \mu(A)
\end{equation*}
because $\calF$ is disjointed.
Letting $\calE_{A,n} = \calE_A \cap \{ E : \mu(E \cap A) \geq n^{-1} \}$ we infer that $\calE_A = \cup_{n=1}^\infty \calE_{A,n}$ and $\rmcard \calF \leq n \mu(A) < \infty$.
This completes the proof of the claim.
\par 
Define
\begin{equation*}
B = \cup_{E \in \calE_A} E \cap A
\end{equation*}
and notice that $B \in \calA$ because $\calE_A$ is at most countable.
Let $C = A \setminus B$.
Assume if possible that $\mu(C) > 0$.
Since $\mu(C) \leq \mu(A) < \infty$ it follows from hypothesis (3) that the exists $E \in \calE$ such that $\mu(E \cap C) > 0$. 
Thus $C \in \calE_A$ and in turn $E \cap C \subset B$ so that $E \cap C = E \cap (C \cap B) = \emptyset$ by the definition of $B$, a contradiction.
Thus indeed $\mu(C)=0$.
Finally
\begin{equation*}
\mu(A) = \mu(B) = \sum_{E \in \calE_A} \mu(E \cap A) = \sum_{E \in \calE} \mu(E \cap A) \,.
\end{equation*}
\par 
It remains to establish that condition (3) of \ref{almost.decomposition} holds.
Since $(X,\calA,\mu)$ is locally determined it suffices to show the following: If $A \in \calP(X)$ and $A \cap E \in \calA$ for every $E \in \calE$, then $A \cap F$ for every $F \in \calA^f$.
Fix $A \in \calP(X)$ that meets this condition.
Let $F \in \calA^f$.
We apply the preceding paragraph to $F$:
\begin{equation*}
F = \left( \cup_{E \in \calE_F} E \cap F \right) \cup N
\end{equation*}
for some $N \in \calN_\mu$.
Therefore
\begin{equation*}
A \cap F = \left( \cup_{E \in \calE_F} \left( (A \cap E) \cap F \right)\right) \cup (A \cap N) \,.
\end{equation*}
Now each $A \cap E \in \calA$ by assumption, thus also $A \cap E \cap F \in \calA$, $E \in \calE$.
Since $\calE_F$ is at most countable, the first term in the union of the right hand side above belongs to $\calA$.
Furthermore $A \cap N \in \calN_\mu \subset \calA$ because $(X,\calA,\mu)$ is complete.
Therefore $A \cap F \in \calA$.
Since $F \in \calA^f$ is arbitrary we are done.
\end{proof}

\begin{Proposition}
\label{mcshane}
Assume that a measure space $(X,\calA,\mu)$:
\begin{enumerate}
\item is complete;
\item is locally determined;
\item has magnitude less than $\cac$;
\item is semilocalizable.
\end{enumerate}
It follows that it is almost decomposable.
\end{Proposition}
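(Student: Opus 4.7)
The plan is to produce a disjointed family $\calG \subset \calA^f \setminus \calN_\mu$ satisfying the covering condition (3) of Proposition \ref{416}; the conclusion then yields an almost decomposition. Since $(X,\calA,\calN_\mu[\calA^f])$ is localizable by the semilocalizability hypothesis (4), Lemma \ref{existence.pu} applied to the ideal $\calA^f$ produces a partition of unity $\calE$ (relative to $\calN_\mu[\calA^f]$). The magnitude hypothesis (3) forces $\rmcard \calE < \cac$, so I enumerate $\calE = (E_\alpha)_{\alpha < \bkappa}$ with $\bkappa < \cac$.

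For each $\alpha$, let $S_\alpha \in \calA$ be an $\calN_\mu[\calA^f]$-essential supremum of $\calE \setminus \{E_\alpha\}$, provided by semilocalizability, and set $\hat{E}_\alpha := E_\alpha \setminus S_\alpha$. Testing the essential-supremum characterization from \ref{def.ess.sup} against $B' = S_\alpha \setminus E_\alpha$ -- for each $\beta \neq \alpha$, $E_\beta \setminus B' = (E_\beta \setminus S_\alpha) \cup (E_\beta \cap E_\alpha) \in \calN_\mu[\calA^f]$ since $E_\beta \setminus S_\alpha \in \calN_\mu[\calA^f]$ by the essential-supremum property and $E_\beta \cap E_\alpha \in \calN_\mu$ by the almost-disjointness of $\calE$ -- forces $S_\alpha \setminus B' = S_\alpha \cap E_\alpha \in \calN_\mu[\calA^f]$, whence $\mu(S_\alpha \cap E_\alpha) = 0$ and $\mu(\hat{E}_\alpha) = \mu(E_\alpha) > 0$. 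For $\alpha \neq \beta$ the inclusion $\hat{E}_\alpha \cap \hat{E}_\beta \subset E_\alpha \setminus S_\beta \in \calN_\mu$ (since $E_\alpha$ is among the sets of which $S_\beta$ is an essential supremum) shows that the family $(\hat{E}_\alpha)$ is almost-disjoint.

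The decisive step is to upgrade to strict disjointness. I set $Z_\alpha := \hat{E}_\alpha \cap \bigcup_{\beta \neq \alpha} \hat{E}_\beta$ and $\tilde{E}_\alpha := \hat{E}_\alpha \setminus Z_\alpha$. Once one shows $Z_\alpha \in \calA$ and $\mu(Z_\alpha) = 0$, the $\tilde{E}_\alpha$'s are strictly disjoint elements of $\calA^f \setminus \calN_\mu$ with $\mu(\tilde{E}_\alpha) = \mu(E_\alpha)$, and Proposition \ref{416} applied to $\calG := \{\tilde{E}_\alpha : \alpha < \bkappa\}$ yields an almost decomposition, because the covering condition (3) of \ref{416} transfers from the partition-of-unity property of $\calE$ via $\mu(\tilde{E}_\alpha \triangle E_\alpha) = 0$. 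To verify $Z_\alpha \in \calA$, I invoke local determinability (2) and reduce to $Z_\alpha \cap F \in \calA$ for each $F \in \calA^f$. The almost-disjointness of $(\hat{E}_\beta)$ in the finite-measure set $F$ leaves only countably many $\beta$ with $\mu(E_\beta \cap F) > 0$; call this countable set $\calE_F^+$, which contributes a countable union of null measurable sets to $Z_\alpha \cap F$. For the uncountable remainder, the partition-of-unity covering property applied to $F$ gives $F \setminus \bigcup \calE_F^+ \in \calN_\mu$, and each $E_\beta \cap F$ with $\beta \notin \calE_F^+$ decomposes into $\bigcup_{\beta' \in \calE_F^+}(E_\beta \cap E_{\beta'} \cap F) \cup (F \setminus \bigcup \calE_F^+)$, a countable union of null measurable sets. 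Completeness (1) then promotes subsets of these null sets to measurable, so $Z_\alpha \cap F \in \calN_\mu$; a parallel argument in the ambient $\hat{E}_\alpha \in \calA^f$ gives $\mu(Z_\alpha) = 0$.

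The main obstacle is precisely this measurability of $Z_\alpha$: since all pairwise intersections $\hat{E}_\alpha \cap \hat{E}_\beta$ are null, the standard ``countably many positive-measure pieces'' argument in a finite-measure set does not by itself bound the uncountable null union, and one must use the covering property of the partition of unity -- itself a consequence of semilocalizability via Lemma \ref{existence.pu} -- to reduce the uncountable null union to a countable one, after which completeness and local determinability close the argument.
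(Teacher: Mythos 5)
Your reduction to Proposition \ref{416} and your use of the partition of unity from \ref{existence.pu} follow the paper's strategy, and the computation showing $\mu(S_\alpha \cap E_\alpha)=0$ is correct (though it buys nothing: the $\hat{E}_\alpha$ are still only almost disjoint, exactly like the $E_\alpha$). The proof breaks down at the ``decisive step''. The set $Z_\alpha = \hat{E}_\alpha \cap \bigcup_{\beta \neq \alpha} \hat{E}_\beta$ is a union of $\rmcard\,\calE$ many null sets, and your argument that $Z_\alpha \cap F$ lies in a countable union of null measurable sets does not go through: after splitting off the countably many $\beta \in \calE_F^+$, the remaining part is $\bigcup_{\beta \notin \calE_F^+} (\hat{E}_\alpha \cap \hat{E}_\beta \cap F)$, and the null measurable set $\bigcup_{\beta' \in \calE_F^+}(E_\beta \cap E_{\beta'} \cap F) \cup (F \setminus \bigcup \calE_F^+)$ in which you enclose each term still depends on $\beta$ through the factors $E_\beta \cap E_{\beta'} \cap F$. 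Taking the union over the uncountably many remaining $\beta$ therefore reproduces exactly the uncountable union of null sets you started with: for fixed $\beta'$ the sets $E_\beta \cap E_{\beta'} \cap F$ are uncountably many null subsets of the positive-measure set $E_{\beta'} \cap F$, and nothing bounds their union. So neither $Z_\alpha \in \calA$ nor $\mu(Z_\alpha)=0$ is established, and completeness cannot be invoked because $Z_\alpha \cap F$ has not been trapped inside a single null measurable set. A telling symptom: after the enumeration your argument never uses $\rmcard\,\calE \leq \cac$ again, so if it were correct it would prove the proposition without hypothesis (3) --- contradicting Fremlin's example \cite[216E]{FREMLIN.II} of a complete, locally determined, localizable (hence semilocalizable) measure space that is not almost decomposable.

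The missing idea is McShane's labelling trick, which is precisely where the magnitude hypothesis enters. Choose an injection $u : \calE \to ]0,1]$ (possible because $\rmcard\,\calE \leq \cac$), form the compatible family $g_E = u(E)\ind_E$, and let $g$ be a gluing supplied by semilocalizability via \ref{gluing}. Setting $G_E = E \cap g^{-1}\{u(E)\}$ produces sets that are genuinely pairwise disjoint --- not because overlaps have been subtracted, but because the single function $g$ cannot take the two distinct values $u(E_1) \neq u(E_2)$ at one point --- while the loss $E \setminus G_E = E \cap \{ g \neq g_E \}$ is locally null, which is exactly what is needed to transfer condition (3) of \ref{416} from $\calE$ to $\calG = \{ G_E : E \in \calE \}$.
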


\begin{proof}
According to \ref{existence.pu} applied with $\calI = \calA^f$ and $\calN = \calN_\mu$, there exists $\calE \subset \calA$ such that
\begin{enumerate}
\item[(a)] $\calE \cap \calN_\mu = \emptyset$;
\item[(b)] For every $E_1, E_2 \ in \calE$, if $E_1 \neq E_2$ then $E_1 \cap E_2 \in \calN_\mu$;
\item[(c)] For every $A \in \calA^f \setminus \calN_\mu$  there exists $E \in \calE$ such that $A \cap E \not \in \calN_\mu$.
\end{enumerate}
By hypothesis (3), $\rmcard \calE \leq \cac$ thus there exists an injective map $u : \calE \to ]0,1]$.
With each $E \in \calE$ we associate $g_E = u(E) \ind_E$ which is $\calA$ measurable, thus $(g_E)_{E \in \calE}$ is a family subordinated to $\calE$.
It is compatible relative to $(X,\calA,\calN_\mu[\calA^f])$ because if $E_1, E_2 \in \calE$ and $E_1 \cap E_2 \in \calN_\mu$ then 
\begin{equation*}
E_1 \cap E_2 \cap \{ g_{E_1} \neq g_{E_2} \} \subset E_1 \cap E_2 \in \calN_\mu \subset \calN_\mu[\calA^f] \,.
\end{equation*}
By hypothesis (4) it therefore admits a gluing $g$, i.e. an $\calA$ measurable function $g : X \to \R$ such that for every $E \in \calE$,
\begin{equation*}
E \cap \{ g \neq g_E \} \in \calN_\mu[\calA^f] \,.
\end{equation*}
Now for $E \in \calE$ we define
\begin{equation*}
G_E = E \cap g^{-1}\{u(E)\} \in \calA \,.
\end{equation*}
If $E_1 \neq E_2$ then $G_{E_1} \cap G_{E_2} \subset g^{-1}\{u(E_1)\} \cap g^{-1}\{u(E_1)\} = \emptyset$ because $u$ is injective. 
Accordingly, $\calG = \{ G_E : E \in \calE\}$ is disjointed.
Also, $\mu(G_E) \leq \mu(E) < \infty$ when $E \in \calE$, thus $\calG \subset \calA^f$. 
\par 
In order to conclude it remains only to establish that $\calG$ verifies condition (3) of \ref{416}.
Assume to the contrary that there exists $A \in \calA^f \setminus \calN_\mu$ such that $A \cap G_E \in \calN_\mu$ for all $E \in \calE$.
Given $E \in \calE$ we shall now show that $A \cap E \in \calN_\mu$, in contradiction with (c) above, thereby completing the proof.
Since $A \cap E = ( A \cap G_E) \cup ( A \cap (E \setminus G_E))$ and $A \cap G_E \in \calN_\mu$, it suffices to show that $A \cap (E \setminus G_E) \in \calN_\mu$.
Recall that $E \setminus G_E = E \cap \{ g \neq g_E \} \in \calN_\mu[\calA^f]$.
Since $A \in \calA^f$ we infer indeed that $A \cap (E \setminus G_E) \in \calN_\mu$.
\end{proof}

\begin{Corollary}
Assume that $(X,\calA,\mu)$ is a complete, locally determined measure space and has magnitude less than $\cac$. The following are equivalent.
\begin{enumerate}
\item $\Upsilon$ is surjective;
\item $(X,\calA,\mu)$ is semilocalizable;
\item $(X,\calA,\mu)$ admits an almost decomposition.
\end{enumerate}
\end{Corollary}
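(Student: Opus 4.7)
The plan is essentially to package three prior results together, since the Corollary is no more than their juxtaposition under the stated hypotheses. Before trying any fresh argument, I would check whether each of the three implications needed is already in the paper, and it is.

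For (1) $\Leftrightarrow$ (2), I would simply invoke Proposition \ref{Riesz}, which asserts the equivalence between surjectivity of $\Upsilon$ and semilocalizability \emph{for every} measure space $(X,\calA,\mu)$. None of the extra hypotheses of the Corollary (completeness, local determinacy, bounded magnitude) are needed for this direction; they are pure cosmetic ornaments in this step. For (3) $\Rightarrow$ (2), I would cite Proposition \ref{ad.implies.semiloc}: any almost decomposable measure space is semilocalizable, again unconditionally.

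The only nontrivial implication is (2) $\Rightarrow$ (3), and this is precisely the content of Proposition \ref{mcshane}, which assumes the four hypotheses that appear in the Corollary -- completeness, local determinacy, magnitude $< \cac$, and semilocalizability -- and concludes almost decomposability. I would therefore simply invoke it. The verification here amounts to noting that the hypotheses of the Corollary match the hypotheses of Proposition \ref{mcshane} verbatim.

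There is no real obstacle: the mathematics has been absorbed into Propositions \ref{Riesz}, \ref{ad.implies.semiloc} and \ref{mcshane}. The only point worth underlining, should one want to explain \emph{why} the Corollary's hypotheses are the right ones, is that the magnitude hypothesis serves solely to make room for an injection $u : \calE \to (0,1]$ of a maximal disjointed family indexing a partition of unity in $\calA^f$ modulo $\calN_\mu$; this injection is the heart of the McShane-style trick used in the proof of Proposition \ref{mcshane}, where the gluing of the functions $g_E = u(E)\ind_E$ is used to peel off the sets $G_E$ forming an almost decomposition. Completeness and local determinacy then ensure, via Proposition \ref{416}, that the family $\calG = \{G_E\}_{E\in\calE}$ obtained by conditions (a)--(c) of \ref{existence.pu} does indeed fulfil conditions (3) and (4) of \ref{almost.decomposition}. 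The proof itself, however, is just three citations.
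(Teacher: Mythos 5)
Your proof is correct and is exactly what the paper intends: the Corollary is stated without proof immediately after Proposition \ref{mcshane}, and the combination of \ref{Riesz} for (1)$\Leftrightarrow$(2), \ref{ad.implies.semiloc} for (3)$\Rightarrow$(2), and \ref{mcshane} for (2)$\Rightarrow$(3) is precisely the argument the paper itself spells out in the analogous Theorem \ref{71}. Your closing remarks on the roles of the magnitude, completeness, and local determinacy hypotheses accurately reflect how they enter the proof of \ref{mcshane} via \ref{existence.pu} and \ref{416}.
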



\section{Hausdorff Measures in Complete Separable Metric Spaces}

\begin{Theorem}
\label{71}
Let $X$ be a complete separable metric space and $0 < d < \infty$. 
For the measure space $(X,\calA_{\calH^d},\calH^d)$ the following are equivalent.
\begin{enumerate}
\item The canonical map $\Upsilon : \bL_\infty(X,\calA_{\calH^d},\calH^d) \to \bL_1(X,\calA_{\calH^d},\calH^d)^*$ is surjective;
\item $(X,\calA_{\calH^d},\calH^d)$ is semilocalizable;
\item $(X,\calA_{\calH^d},\calH^d)$ is almost decomposable.
\end{enumerate}
\end{Theorem}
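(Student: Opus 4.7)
The plan is to assemble the equivalence directly from the machinery already developed. The equivalence $(1) \iff (2)$ is exactly Proposition \ref{Riesz} applied to $(X,\calA_{\calH^d},\calH^d)$. The implication $(3) \Rightarrow (2)$ is Proposition \ref{ad.implies.semiloc}. Thus the only nontrivial task is to establish $(2) \Rightarrow (3)$, and for this I would invoke Proposition \ref{mcshane} (the generalization of McShane's theorem). This reduces the problem to verifying that $(X,\calA_{\calH^d},\calH^d)$ satisfies the three structural hypotheses of \ref{mcshane} other than semilocalizability: completeness, local determination, and magnitude less than $\cac$.

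For completeness, I would observe that $\calA_{\calH^d}$ is, by definition, the Carath\'eodory $\sigma$-algebra associated with the outer measure $\calH^d$, so every subset of an $\calH^d$-negligible set is $\calH^d$-measurable. For local determination, I would invoke Lemma \ref{62}: it suffices to know that $\calH^d$ has measurable hulls, and indeed $\calH^d$ is a Borel regular outer measure on the Polish space $X$ (recall the discussion in Section 2), so every $S \subset X$ has a Borel hull which is a fortiori an $\calA_{\calH^d}$-measurable hull of the same $\calH^d$-measure. For magnitude, Proposition \ref{63} applies verbatim since $X$ is Polish and $\calH^d$ is a Borel regular outer measure, giving magnitude strictly less than $\cac$ relative to $\calA_{\calH^d}$.

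With the three hypotheses verified, Proposition \ref{mcshane} yields that semilocalizability of $(X,\calA_{\calH^d},\calH^d)$ implies its almost decomposability, closing the loop $(2) \Rightarrow (3) \Rightarrow (2)$. Combined with $(1) \iff (2)$ from \ref{Riesz}, all three conditions are equivalent.

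There is no serious obstacle here: the theorem is essentially a packaging of the Corollary stated immediately after \ref{mcshane}, the only substantive point being the brief check that Hausdorff measure on a Polish space fits the hypotheses. The one subtlety worth being explicit about in the write-up is that <<measurable hulls>> in the sense of \ref{62} follow from Borel regularity together with the inclusion $\calB(X) \subset \calA_{\calH^d}$, so no separate argument (such as Howroyd's semifiniteness theorem) is needed here -- semifiniteness plays no role in this theorem, which is precisely the point of working with the notion of semilocalizability rather than with the classical Maharam condition.
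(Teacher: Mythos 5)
Your proposal is correct and follows essentially the same route as the paper: $(1)\Leftrightarrow(2)$ by \ref{Riesz}, $(3)\Rightarrow(2)$ by \ref{ad.implies.semiloc}, and $(2)\Rightarrow(3)$ by \ref{mcshane} after checking completeness, local determination via \ref{62} (Borel regularity giving measurable hulls), and magnitude less than $\cac$ via \ref{63}. Your closing remark that semifiniteness (Howroyd's theorem) plays no role here is also accurate and matches the paper's intent.
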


\begin{proof}
That (1) and (2) be equivalent for any measure space was established in \ref{Riesz}, and that $(3) \Rightarrow (2)$ for any measure space was proved in \ref{ad.implies.semiloc}.
It remains to observe that $(2) \Rightarrow (3)$ under the present assumptions is a consequence of \ref{mcshane}.
The measure space $(X,\calA_{\calH^d},\calH^d)$ is clearly complete.
The outer measure $\calH^d$ being Borel regular, it follows from \ref{62} that  $(X,\calA_{\calH^d},\calH^d)$ is locally determined, and from \ref{63} and the separability assumption of $X$ that it has magnitude less than $\cac$.
\end{proof}

\section{An Abstract Condition for the Consistency of not being Semilocalizable}

\begin{Empty}[Cardinals related to $\sigma$-ideals]
\label{number.7}
Let $X$ be a set and let $\calN \subset \calP(X)$ be a $\sigma$-ideal in $\calP(X)$.
We recall the following cardinals associated with $\calN$:
\begin{equation*}
\begin{split}
\sfnon(\calN) & = \min \{ \rmcard S : S \subset X \text{ and } S \not \in \calN \} \\
\sfcov(\calN) & = \min \{ \rmcard \calC : \calC \subset \calN \text{ and } X = \cup \calC \} \,.
\end{split}
\end{equation*}
Letting $\calL^1$ denote the restriction of the Lebesgue {\it outer measure} to the interval $[0,1]$ we consider the corresponding cardinals $\sfnon(\calN_{\calL^1})$ and $\sfcov(\calN_{\calL^1})$.
These are part of the so-called Cicho\'n diagram, see \cite[522]{FREMLIN.V.1}.
Below we will use the fact that the strict inequality $\sfnon(\calN_{\calL^1}) < \sfcov(\calN_{\calL^1})$ is consistent with $\mathsf{ZFC}$, see \cite[Chapter 7]{BARTOSZYNSKI.JUDAH} or \cite[552H and 552G]{FREMLIN.V.2}.
\end{Empty}

\begin{Lemma}
\label{number.8}
Let $X$ be a Polish space and let $\mu$ be a diffuse probability measure defined on $\calB(X)$. 
It follows that:
\begin{enumerate}
\item $\sfnon(\calN_{\bar{\mu}}) = \sfnon(\calN_{\calL^1})$;
\item $\sfcov(\calN_{\bar{\mu}}) = \sfcov(\calN_{\calL^1})$.
\end{enumerate}
\end{Lemma}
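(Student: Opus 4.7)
The plan is to transport both $\sigma$-ideals through a measure-preserving Borel isomorphism between full-measure Borel subsets of $X$ and $[0,1]$. Since $\mu$ is a diffuse probability measure, $X$ is uncountable. By the Kuratowski Isomorphism Theorem, fix a Borel isomorphism $\phi : X \to [0,1]$ and set $\nu := \phi_*\mu$, a diffuse Borel probability measure on $[0,1]$. A standard distribution-function argument applied to $F(t) := \nu([0,t])$, continuous by diffuseness, then produces Borel sets $X_0 \subset X$ and $Y_0 \subset [0,1]$ with $\mu(X \setminus X_0) = 0$ and $\calL^1([0,1] \setminus Y_0) = 0$, together with a Borel isomorphism $f : X_0 \to Y_0$ satisfying $f_*(\mu|_{X_0}) = \calL^1|_{Y_0}$. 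Concretely, delete the open set $U \subset [0,1]$ on which $F$ is locally constant; since $U$ is a countable union of maximal open constancy intervals, $F(U)$ is countable and $\nu(U) = 0$, so $f := F \circ \phi$ restricted to $X_0 := \phi^{-1}([0,1] \setminus U)$ mapping onto $Y_0 := F([0,1] \setminus U)$ is the desired bijection.

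The next step is the key lemma: $\bar\mu(N) = \calL^1(f(N))$ for every $N \subset X_0$. By Borel regularity (\ref{26}) pick a Borel hull $N \subset B \subset X_0$ with $\mu(B) = \bar\mu(N)$; then $f(B) \subset Y_0$ is Borel and $\calL^1(f(B)) = \mu(B) = \bar\mu(N)$. For any Borel $B' \subset Y_0$ containing $f(N)$ one has $\calL^1(B') = \mu(f^{-1}(B')) \geq \bar\mu(N)$, so $f(B)$ is a Borel hull of $f(N)$ and the equality follows. Consequently, $f$ induces a cardinality-preserving bijection between non-$\bar\mu$-null subsets of $X_0$ and non-$\calL^1$-null subsets of $Y_0$.

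To deduce (1): if $S \subset X$ is non-$\bar\mu$-null of cardinality $\sfnon(\calN_{\bar\mu})$, then $S \cap X_0$ remains non-null (since $\bar\mu(X \setminus X_0) = 0$), so $f(S \cap X_0) \subset [0,1]$ is non-$\calL^1$-null of cardinality at most $\rmcard S$, giving $\sfnon(\calN_{\calL^1}) \leq \sfnon(\calN_{\bar\mu})$; the reverse inequality is symmetric via $f^{-1}$. To deduce (2): any covering $\calC \subset \calN_{\bar\mu}$ of $X$ yields the covering $\{f(C \cap X_0) : C \in \calC\} \cup \{[0,1] \setminus Y_0\}$ of $[0,1]$ by $\calL^1$-null sets, of cardinality at most $\rmcard \calC + 1$. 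Since $\sigma$-additivity forces $\sfcov(\calN_{\bar\mu}), \sfcov(\calN_{\calL^1}) \geq \aleph_1$, the extra element is absorbed, and the symmetric argument closes the loop.

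The main obstacle is the construction in the first paragraph: the Kuratowski Isomorphism Theorem is purely measurable-space-theoretic and carries no measure information, so the additional distribution-function smoothing, together with the observation that $F(U)$ is countable, is essential in order to exactly align $\mu$ with Lebesgue measure on full-measure subsets. Once $f$ is in hand, the transfer of the two cardinal invariants is essentially formal.
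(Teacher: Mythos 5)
Your proof is correct and follows essentially the same route as the paper's: both transport the two null $\sigma$-ideals through a measure-preserving Borel isomorphism and use Borel regularity of $\bar{\mu}$ and $\calL^1$ to see that null sets correspond to null sets; the only difference is that the paper invokes the measure-theoretic form of the isomorphism theorem (\cite[3.4.23]{SRIVASTAVA}), which already yields a Borel isomorphism $f:X\to[0,1]$ with $f_*\mu=\calL^1|_{\calB([0,1])}$ globally, so your construction of $f$ on conull subsets via the distribution function, and the resulting bookkeeping with $X\setminus X_0$ and $[0,1]\setminus Y_0$, is not needed. One small point to tidy in your first paragraph: $F$ need not be injective on $[0,1]\setminus U$, since the two endpoints of each maximal constancy interval survive the deletion of $U$ and have the same image; you should additionally remove one endpoint from each of these countably many pairs, which is harmless.
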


\begin{proof}
Since $\mu$ is diffuse and nonzero, $X$ is uncountable and therefore the Kuratowski Isomorphism Theorem applies, \cite[3.4.23]{SRIVASTAVA}: There exists a bijection $f : X \to [0,1]$ such that both $f$ and $f^{-1}$ are Borel measurable, and $f_*\mu = \lambda$ where $\lambda = \calL^1|_{\calB([0,1])}$.
We claim that:
\begin{equation*}
\text{{\it For every} } S \subset X : \bar{\mu}(S) = 0 \text{ {\it if and only if} } \calL^1(f(S))=0 \,.
\end{equation*}
Assume that $\bar{\mu}(S)=0$.
Since $\bar{\mu}$ is Borel regular, \ref{26} there exists $\calB(X) \ni B \supset S$ such that $\mu(B)=0$.
As $f(B)$ is Borel one has $\calL^1(f(B))=\lambda(f(B))=(f_*\mu)(f(B))=\mu(B)=0$ and therefore $\calL^1(f(S))=0$ because $f(S) \subset f(B)$.
The other way round one argues similarly, referring to the Borel regularity of $\calL^1$.
\par 
We now prove (1).
Assume $S \subset [0,1]$ and $S \not \in \calN_{\calL^1}$, i.e. $\calL^1(S) > 0$. 
It follows from the claim above claim that $f^{-1}(S) \not \in \calN_{\bar{\mu}}$.
Since $\rmcard S = \rmcard f^{-1}(S)$ we infer that $\sfnon(\calN_{\bar{\mu}}) \leq \sfnon(\calN_{\calL^1})$.
The reverse inequality is proved in a similar fashion.
\par 
Let $(N_i)_{i \in I} \subset \calN_{\calL^1}$ be such that $[0,1] = \cup_{i \in I} N_i$.
It follows from the claim above that $(f^{-1}(N_i))_{i \in I} \subset \calN_{\bar{\mu}}$, and clearly $X = \cup_{i \in I} f^{-1}(N_i)$, thus $\sfcov(\calN_{\bar{\mu}}) \leq \sfcov(\calN_{\calL^1})$.
The reverse inequality follows similarly and the proof of (2) is complete.
\end{proof}

\begin{Theorem}[$\mathsf{ZFC} + \sfnon(\calN_{\calL^1}) < \sfcov(\calN_{\calL^1})$]
\label{abstract.theorem}
Assume that $(X,\calA,\calN)$ is a measurable space with negligibles and that $(S,\calB(S),\sigma)$, $(T,\calB(T),\tau)$ are diffuse probability spaces with $S$ and $T$ being Polish.
Furthermore assume the existence of maps $S \to \calA : s \mapsto V_s$ and $T \to \calA : t \mapsto H_t$ with the following properties.
\begin{enumerate}
\item For every $s \in S$ and every $t \in T$ one has $\emptyset \neq V_s \cap H_t \in \calN$;
\item For every $Z \in \calA$ one has:
\begin{enumerate}
\item For every $s \in S$ if $V_s \cap Z \in \calN$ then 
\begin{equation*}
\overline{\tau} ( T \cap \{ t : H_t \cap V_s \cap Z \neq \emptyset \} ) = 0 \,;
\end{equation*}
\item For every $t \in T$ if $H_t \cap Z \in \calN$ then 
\begin{equation*}
\overline{\sigma } ( S \cap \{ s : V_s \cap H_t \cap Z \neq \emptyset \} ) = 0 \,.
\end{equation*}
\end{enumerate}
\end{enumerate}
Under the consistent assumption that $\sfnon(\calN_{\calL^1}) < \sfcov(\calN_{\calL^1})$ it follows that $(X,\calA,\calN)$ is not localizable.
\end{Theorem}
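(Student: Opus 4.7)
The plan is to assume for contradiction that $(X,\calA,\calN)$ is localizable and derive a contradiction by combining hypotheses (1) and (2) with the cardinal inequality $\sfnon(\calN_{\calL^1}) < \sfcov(\calN_{\calL^1})$, transferred from $\calL^1$ to $\overline{\sigma}$ and $\overline{\tau}$ via Lemma \ref{number.8}.

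First I would apply the localizability hypothesis to the family $\calE = \{V_s : s \in S\}$ to obtain an $\calN$-essential supremum $A \in \calA$. From \ref{def.ess.sup}(1) I get immediately $V_s \setminus A \in \calN$ for every $s \in S$. Next, to extract the symmetric conclusion on the horizontal fibres, I fix $t \in T$ and set $B = A \setminus H_t \in \calA$; then
\begin{equation*}
V_s \setminus B = (V_s \setminus A) \cup (V_s \cap H_t) \in \calN
\end{equation*}
for every $s \in S$, by the first conclusion together with hypothesis (1). Applying \ref{def.ess.sup}(2) to $B$ yields $A \setminus B = A \cap H_t \in \calN$. Thus
\begin{equation*}
\text{(a)} \ V_s \setminus A \in \calN \ (\forall s \in S), \qquad \text{(b)} \ H_t \cap A \in \calN \ (\forall t \in T).
\end{equation*}

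Now I plug these into hypothesis (2). Taking $Z = X \setminus A \in \calA$ in (2)(a), so that $V_s \cap Z = V_s \setminus A \in \calN$, I conclude that for every $s \in S$ the set
\begin{equation*}
T_s := T \cap \{ t : H_t \cap V_s \cap (X \setminus A) \neq \emptyset \}
\end{equation*}
satisfies $\overline{\tau}(T_s)=0$; equivalently, $V_s \cap H_t \subset A$ for every $t \in T \setminus T_s$. Taking $Z = A$ in (2)(b), so that $H_t \cap Z = H_t \cap A \in \calN$ by (b), I conclude that for every $t \in T$ the set
\begin{equation*}
S_t := S \cap \{ s : V_s \cap H_t \cap A \neq \emptyset\}
\end{equation*}
satisfies $\overline{\sigma}(S_t)=0$; equivalently, $V_s \cap H_t \subset X \setminus A$ for every $s \in S \setminus S_t$.

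To close the argument I would invoke Lemma \ref{number.8} (using that $\sigma$ and $\tau$ are diffuse probability measures on Polish spaces) to get $\sfnon(\calN_{\overline{\sigma}}) = \sfnon(\calN_{\calL^1})$ and $\sfcov(\calN_{\overline{\tau}}) = \sfcov(\calN_{\calL^1})$. Choose $E \subset S$ with $\overline{\sigma}(E) > 0$ and $\rmcard E = \sfnon(\calN_{\overline{\sigma}})$. Since
\begin{equation*}
\rmcard E = \sfnon(\calN_{\calL^1}) < \sfcov(\calN_{\calL^1}) = \sfcov(\calN_{\overline{\tau}}),
\end{equation*}
the union $\bigcup_{s \in E} T_s$ cannot cover $T$, so I can pick some $t^* \in T \setminus \bigcup_{s \in E} T_s$. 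For this $t^*$, $\overline{\sigma}(S_{t^*}) = 0 < \overline{\sigma}(E)$ forces $E \not\subset S_{t^*}$, so I can pick some $s^* \in E \setminus S_{t^*}$. Then $V_{s^*} \cap H_{t^*} \subset A$ (because $t^* \notin T_{s^*}$) and $V_{s^*} \cap H_{t^*} \subset X \setminus A$ (because $s^* \notin S_{t^*}$), whereas hypothesis (1) gives $V_{s^*} \cap H_{t^*} \neq \emptyset$, a contradiction. The main obstacle is the conceptual step (b) above — realising that the essential supremum property forces negligibility of $H_t \cap A$ through a clever choice of test set $B = A \setminus H_t$; the remainder is a direct cardinal counting argument.
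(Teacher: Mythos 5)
Your proposal is correct and follows essentially the same route as the paper's proof: the essential supremum $A$ of $\{V_s\}$, the key test set $B = A \setminus H_t$ to get $H_t \cap A \in \calN$, the transfer of the cardinals via the Kuratowski isomorphism (Lemma \ref{number.8}), and the covering argument with a set $E$ of cardinality $\sfnon(\calN_{\calL^1})$. The only (immaterial) difference is at the very end, where you extract a single pair $(s^*,t^*)$ with $V_{s^*}\cap H_{t^*}$ contained in both $A$ and $X\setminus A$, whereas the paper concludes $E \subset S\cap\{s : V_s\cap H_t\cap A \neq \emptyset\}$ and hence $\overline{\sigma}(E)=0$; these are equivalent formulations of the same contradiction.
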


\begin{proof}
Proceeding toward a contradiction, assume if possible that $(X,\calA,\calN)$ is localizable.
The family $\calE = \{ V_s : s \in S \} \subset \calA$ would then admit an $\calN$ essential supremum, say $A \in \calA$.
Thus,
\begin{enumerate}
\item[(A)] For every $s \in S$: $V_s \setminus A \in \calN$;
\item[(B)] For every $t \in T$: $H_t \cap A \in \calN$.
\end{enumerate}
Condition (A) readily follows from the definition of an essential supremum whereas condition (B) is established in the following manner.
Fix $t \in T$ and consider the set $B = A \setminus H_t$ and observe that for every $s \in S$ one has $V_s \setminus B = (V_s \setminus A) \cup (V_s \cap H_t) \in \calN$ since the first term is a member of $\calN$ by (A), and the second by hypothesis (1).
As $s$ is arbitrary, $B$ is an $\calN$ essential upper bound of $\calE$. 
Therefore $A \setminus B \in \calN$ and it remains to notice that $A \setminus B = A \cap H_t$. 
\par 
With each $s \in S$ we now associate the set 
\begin{equation*}
T_s = T \cap \{ t : H_t \cap V_s \cap A^c \neq \emptyset \} \,.
\end{equation*}
Since $V_s \cap A^c \in \calN$ by (A), our hypothesis (2)(a) implies  that $\overline{\tau}(T_s)=0$.
Now let $E \subset S$ be such that $\overline{\sigma}(E) > 0$ and 
\begin{equation*}
\rmcard E = \sfnon(\calN_{\bar{\sigma}}) = \sfnon(\calN_{\calL^1}) < \sfcov(\calN_{\calL^1}) = \sfcov(\calN_{\bar{\tau}}) \,,
\end{equation*}
where the second and last equalities follow from \ref{number.8} and the strict inequality is our consistent assumption.
We next define $F = \cup_{s \in E} T_s \subset T$.
Since each $T_s \in \calN_{\bar{\tau}}$ and $\rmcard E < \sfcov(\calN_{\bar{\tau}})$ it ensues that $F \neq T$.
Pick $t \in T \setminus F$.
Thus for each $s \in E$ one has $t \not \in T_s$, i.e. $H_t \cap V_s \cap A^c = \emptyset$ and in turn (since $V_s \cap H_t \neq \emptyset$ by assumption (1)) $H_t \cap V_s \cap A \neq \emptyset$. 
Accordingly, 
\begin{equation*}
E \subset S \cap \{ s : V_s \cap H_t \cap A \neq \emptyset \} \,.
\end{equation*}
Yet $H_t \cap A \in \calN$ for every $t \in T$ by (B), thus $\overline{\sigma}(E)=0$ by assumption (2)(b), a contradiction.
\end{proof}

\begin{Empty}
\label{84}
One checks (in a similar way as in \ref{85} below) that \ref{abstract.theorem} applies to the measurable space with negligibles $(X,\calA,\calN)$ where $X = [0,1] \times [0,1]$, $\calA$ is a $\sigma$ algebra of subsets of $X$ containing the Borel subsets of $X$, and $\calN$ consists of those members of $\calA$ that are $\calH^1$ negligible.
The hypotheses are met with both $(S,\calB(S),\sigma)$ and $(T,\calB(T),\tau)$ being $([0,1],\calB([0,1]),\calL^1|_{\calB([0,1])})$ and $V_s = \{s\} \times [0,1]$, $H_t = [0,1] \times \{t\}$, $s,t \in [0,1]$.
Thus $V_s \cap H_t = \{(s,t)\}$ and the condition $V_s \cap H_t \cap Z \neq \emptyset$ is equivalent to $(s,t) \in Z$, and therefore (2)(a) (resp. (2)(b)) of \ref{abstract.theorem} holds since the corresponding slice of $Z$ is assumed to be $\calH^1$ negligible, and the projection on the second (resp. first) axis contracts $\calH^1$ measure.
The case when $\calA$ consists of those $\calH^1$ measurable subsets of $X$ was proved in \cite{DEP.98}.
The notion of a measurable space with negligibles makes it a possibility to dispense altogether with a measure being defined on $\calA$ and, consequently allows for our slightly more general statement here.
The point being that the nature of the statement does not involve a measure. 
See also \ref{CR.2}(Q1).
\end{Empty}

\begin{Theorem}
\label{85}
Let $0 < d < 1$ and let $C_d \subset [0,1]$ be a self-similar Cantor set of Hausdorff dimension $d$ described in \cite[4.10]{MATTILA}.
The measure space $(C_d \times C_d, \calA_{\calH^d},\calH^d)$ is consistently not semilocalizable.
\end{Theorem}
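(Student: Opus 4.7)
The plan is to apply Theorem \ref{abstract.theorem} to the measurable space with negligibles $(X,\calA,\calN) = (C_d \times C_d, \calA_{\calH^d}, \calN_{\calH^d})$, using the natural product structure on $X$ to produce the vertical and horizontal slices. I would take $S = T = C_d$, which is compact hence Polish, and set $\sigma = \tau = \calH^d(C_d)^{-1}\calH^d|_{\calB(C_d)}$. This is a diffuse Borel probability measure because the self-similar Cantor set has $0 < \calH^d(C_d) < \infty$ (see \cite[4.10]{MATTILA}) and $\calH^d$ assigns mass zero to each singleton. For the slices I would take $V_s = \{s\} \times C_d$ and $H_t = C_d \times \{t\}$; both are compact, so Borel, so members of $\calA_{\calH^d}$.

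Condition (1) of \ref{abstract.theorem} is immediate: $V_s \cap H_t = \{(s,t)\}$ is nonempty and has zero $\calH^d$ measure since $d > 0$. Conditions (2)(a) and (2)(b) are symmetric, so I only need to treat (2)(a). The key observation here is that the map $\phi_s \colon C_d \to V_s$, $t \mapsto (s,t)$, is an isometry onto a translate of $C_d$ in $\R^2$, hence preserves $\calH^d$. If $V_s \cap Z \in \calN_{\calH^d}$, then
\begin{equation*}
\calH^d\bigl(C_d \cap \{t : (s,t) \in Z\}\bigr) \;=\; \calH^d(V_s \cap Z) \;=\; 0.
\end{equation*}
Since $V_s \cap H_t \cap Z \neq \emptyset$ exactly when $(s,t) \in Z$, the set appearing in (2)(a) is $\phi_s^{-1}(V_s \cap Z)$. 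To upgrade $\calH^d$-nullness to $\overline{\tau}$-nullness I would invoke Borel regularity of $\calH^d$ on $C_d$ (see \ref{24}): any subset admits a Borel hull of equal $\calH^d$ measure, and an examination of the definitions in \ref{P1} shows that $\overline{\tau}$ coincides with $\calH^d/\calH^d(C_d)$ on all of $\calP(C_d)$. Hence $\overline{\tau}(T \cap \{t : H_t \cap V_s \cap Z \neq \emptyset\}) = 0$, establishing (2)(a).

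Once the hypotheses are verified, Theorem \ref{abstract.theorem} will yield, under the consistent strict inequality $\sfnon(\calN_{\calL^1}) < \sfcov(\calN_{\calL^1})$ recalled in \ref{number.7}, that the measurable space with negligibles $(X, \calA_{\calH^d}, \calN_{\calH^d})$ is not localizable. To convert this into non-semilocalizability of the measure space $(X, \calA_{\calH^d}, \calH^d)$, I would appeal to Howroyd's Theorem \cite{HOW.95} to the effect that $\calH^d$ is semifinite on the complete separable metric space $C_d \times C_d$. Lemma \ref{sigmafin} then gives $\calN_{\calH^d}[\calA_{\calH^d}^f] = \calN_{\calH^d}$, so semilocalizability of $(X, \calA_{\calH^d}, \calH^d)$ is precisely localizability of $(X, \calA_{\calH^d}, \calN_{\calH^d})$, which we have just ruled out.

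There is no serious obstacle, since Theorem \ref{abstract.theorem} was designed exactly to accommodate this product set-up; the only delicate point is the clean identification of $\overline{\tau}$ with normalized $\calH^d$ as an outer measure on \emph{all} subsets of $C_d$, which rests on Borel regularity. The construction closely mirrors the classical case $X = [0,1]^2$ indicated in \ref{84}, with $\calL^1$ on $[0,1]$ replaced by $\calH^d$ on $C_d$; the Kuratowski isomorphism theorem input of \ref{number.8} is precisely what makes this replacement cost-free at the level of the Cicho\'n cardinals.
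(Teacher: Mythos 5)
Your choice of slices $V_s=\{s\}\times C_d$, $H_t=C_d\times\{t\}$, the verification of condition (1), and the isometry/projection argument for condition (2) are exactly the paper's. The genuine gap is in the final reduction. You apply \ref{abstract.theorem} with the plain null ideal $\calN_{\calH^d}$ and then pass to semilocalizability --- which by \ref{def.semilocalizable} concerns the ideal $\calN_{\calH^d}\left[\calA_{\calH^d}^f\right]$ of \emph{locally} null sets --- by asserting that $(C_d\times C_d,\calA_{\calH^d},\calH^d)$ is semifinite ``by Howroyd's Theorem'' and invoking \ref{sigmafin}. Howroyd's theorem gives semifiniteness of $(X,\calB(X),\calH^d)$, i.e.\ for the \emph{Borel} $\sigma$-algebra; for the Caratheodory $\sigma$-algebra $\calA_{\calH^d}$ semifiniteness can fail, and the introduction of the paper stresses exactly this point, citing Fremlin's 439H (built on the large universally null sets of Grzegorek and Zindulka) for $(\R^2,\calA_{\calH^1},\calH^1)$. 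So the equality $\calN_{\calH^d}\left[\calA_{\calH^d}^f\right]=\calN_{\calH^d}$ is unjustified here, and without it non-localizability with respect to $\calN_{\calH^d}$ gives no information about semilocalizability: when purely infinite sets exist the two quotient Boolean algebras are genuinely different, and neither localizability implies the other.

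The repair is cheap and is what the paper does: apply \ref{abstract.theorem} directly with $\calN=\calN_{\calH^d}\left[\calA_{\calH^d}^f\right]$. Condition (1) still holds because $\calN_{\calH^d}\subset\calN_{\calH^d}\left[\calA_{\calH^d}^f\right]$, and in condition (2) the hypothesis $V_s\cap Z\in\calN_{\calH^d}\left[\calA_{\calH^d}^f\right]$ still forces $\calH^d(V_s\cap Z)=0$, since $V_s$ itself has finite $\calH^d$ measure and $V_s\cap Z=V_s\cap(V_s\cap Z)$. From there your projection argument goes through verbatim, and the conclusion of \ref{abstract.theorem} is the non-localizability of precisely the measurable space with negligibles whose localizability defines semilocalizability. (Your normalization of $\calH^d\hel C_d$ to a probability measure and the identification of $\overline{\tau}$ with normalized $\calH^d$ on all of $\calP(C_d)$ via Borel regularity are fine.)
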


\begin{proof}
We let $X = C_d \times C_d$, we let $\phi$ be the Hausdorff $\calH^d$ measure restricted to $X$ and $\calA = \calA_\phi$.
We also put $\calN = \calN_\phi[\calA_\phi^f]$ and we aim at checking that \ref{abstract.theorem}applies to the measurable space with negligibles $(X,\calA,\calN)$.
To this end we consider the probability spaces $(S,\calB(S),\sigma)$ and $(T,\calB(T),\tau)$ both equal to $(C_d,\calA_{\calH^d \hel C_d},\calH^d \hel C_d)$.
We further define $V_s = \{s\} \times C$ and $H_t = C \times \{t \}$, $s,t \in C$.
These belong to $\calA$ because they are Borel and $\phi$ is Borel regular.
For each $s,t \in C$, $\emptyset \neq V_s  \cap H_t = \{(s,t)\} \in \calN_\phi \subset \calN$ thus hypothesis (1) of \ref{abstract.theorem} is verified.
Now let $Z \subset X_d$ be $\calH^d$ measurable, $s \in C$, and assume that $V_s \cap Z \in \calN = \calN_\phi[\calA_\phi^f]$.
Since $\calH^d(V_s) = \calH^d(C) < \infty$ and $V_s \cap Z = V_s \cap (V_s \cap Z)$ we instantly infer that $\calH^d(V_s \cap Z)=0$.
Furthermore,
\begin{equation*}
C \cap \{ t : H_t \cap V_s \cap Z \neq \emptyset \} = C \cap \{ t : (s,t) \in V_s \cap Z  \} = \pi_1(V_s \cap Z)
\end{equation*}
and in turn
\begin{equation*}
\calH^d \left( C \cap \{ t : H_t \cap V_s \cap Z \neq \emptyset \}\right) \leq (\rmLip \pi_1)^d \calH^d(V_s \cap Z) = 0 \,.
\end{equation*}
This proves that condition (2)(b) of \ref{abstract.theorem} is satisfied in the present case.
Part (b) is checked in a similar fashion.
\end{proof}

\section{Purely Unrectifiable Example}
\label{section.pue}

\begin{Empty}[The purely unrectifiable set $X$]
\label{pue.1}
We are given a sequence $(\lambda_k)_{k \in \N}$ of positive real numbers such that $\lambda_0 = 1$ and $0 < \lambda_k < \frac{1}{2} \lambda_{k-1}$ for every $k \geq 1$.
We will define inductively a sequence $(\calX_k)_{k \in \N}$ of sets of squares in $\R^2$.
We start with $X_{0,0} = [0,1] \times [0,1]$ and $\calX_0=\{X_{0,0}\}$.
We let $\calX_k$ consists of $4^k$ closed squares: It contains four subsquares of each $S \in \calX_{k-1}$, each having a vertex in common with $S$ and sidelength $\lambda_k$.
We let 
\begin{equation*}
X = \cap_{k=1}^\infty \cup \calX_k \,.
\end{equation*}
It clearly follows from the definitions that $\cup \calX_k \subset \cup \calX_{k-1}$ and that $\calX_k$ consists of $4^k$ pairwise disjoint nonempty compact sets.
Consequently $X$ is (topologically) a Cantor space.
\par 
One next defines $C \subset [0,1]$ as $C = \cap_{k=0}^\infty \cup \calC_k$, where $(\calC_k)_{k \in \N}$ is defined inductively as follows.
$\calC_0=\{[0,1]\}$.
We let $\calC_k$ consists of $2^k$ closed intervals: It contains two subintervals of each $I \in \calC_{k-1}$, each having an endpoint in common with $I$ and length $\lambda_k$.
\begin{enumerate}
\item {\it $X = C \times C$ and if $\calL^1(C)=0$ then $X$ is purely $(\calH^1,1)$ unrectifiable.}
\end{enumerate}
The first assertion follows from the observation that $S \in \calX_k$ if and only if $S = I \times J$ for some $I,J \in \calC_k$.
The second assertion follows from \cite[18.10(4)]{MATTILA}.
\end{Empty}

\begin{Empty}[Numbering of $\calX_k$ and $\calI_k$]
\label{pue.2}
We observe that each $S \in \calX_k$ is contained in a unique $T \in \calX_{k-1}$.
It will be convenient to number $\calX_k = \{ X_{k,j} : j=0,\ldots,4^k-1 \}$ in such a way that $X_{k,j} \subset X_{k-1,\lfloor j/4 \rfloor}$, $k \in \N^*$, $j=0,\ldots,4^k-1$.
This is readily feasible.
\par 
We next consider the sequence $(\calI_k)_{k \in \N}$ of subsets of $[0,1]$ defined as follows.
We put $I_{0,0}=[0,1]$ and $\calI_0=\{I_{0,0}\}$, and we let $\calI_k$ consist of $4^k$ nonoverlapping compact subintervals of $[0,1]$, each of length $4^{-k}$, such that $[0,1] = \cup \calI_k$.
We notice that each $I \in \calI_k$ is contained in a unique $J \in \calI_{k-1}$.
We choose a numbering of $\calI_k = \{ I_{k,\ell} : \ell=0,\ldots,4^k-1 \}$ in such a way that $I_{k,\ell} \subset I_{k-1,\lfloor \ell/4 \rfloor}$, $k \in \N^*$, $\ell=0,\ldots,4^k-1$.
\par 
Given two integers $j,j' \in \N$ we say that $j'$ is a {\em daughter} of $j$ if $j = \lfloor j'/4 \rfloor$.
We say that a sequence $(j_k)_{k \in \N}$ of nonnegative integers is a {\em lineage} if $j_{k-1}$ is a daughter of $j_k$ for every $k \geq 1$.
The following now follows from our choice of numbering.
\begin{enumerate}
\item {\it 
Let $(j_k)_{k \in \N}$ be a sequence of nonnegative integers. The sequence $(X_{k,j_k})_{k \in \N}$ (resp. $(I_{k,j_k})_{k \in \N}$) is decreasing if and only if $(j_k)_{k \in \N}$ is a lineage.
}
\end{enumerate}
\end{Empty}

\begin{Empty}[Coding]
\label{pue.3}
Here we will define functions $j : \N \times X \to \N$ and $\ell : \N \times Y \to \N$ where $Y \subset [0,1]$ is to be described momentarily.
Given $x \in X$ and $k \in \N$, there exists a unique $j(k,x) \in \{0,\ldots,4^{k}-1\}$ such that $x \in X_{k,j(k,x)}$.
This is because the family $\calX_k$ is disjointed.
Furthermore $(X_{k,j(k,x)})_{k \in \N}$ is decreasing, i.e. $(j(k,x))_{k \in \N}$ is a lineage.
\par 
If $y \in [0,1]$ and $k \in \N$ there does not necessarily exist a {\it unique} $\ell \in \{0,\ldots,4^k -1  \}$ such that $y \in I_{k,\ell}$. 
\begin{enumerate}
\item {\it 
For every $y \in [0,1]$ and every $k \in \N$ there exists a unique $\ell \in \{0,\ldots,4^k - 1 \}$ such that $y \in I_{k,\ell}$ if and only if $y \not \in D_k$ where $D_k = \{ j.4^{-k} : j=1,\ldots,4^k - 1 \}$.
}
\end{enumerate}
If instead $y \in \{ j.4^{-k} : j=1,\ldots,4^k -1 \}$ then there are exactly two such $\ell$'s
\begin{enumerate}
\item[(2)] {\it 
Assume $y \in [0,1]$. There are at most two lineages $(\ell_k)_{k \in \N}$ such that $y \in I_{k,\ell_k}$ for every $k \in \N$. 
}
\end{enumerate}
In order to prove this, assume $(\ell_k)_{k \in \N}$, $(\ell'_k)_{k \in \N}$ and $(\ell''_k)_{k \in \N}$ are three lineages, at least two of which are distinct, such that $y \in I_{k,\ell_k} \cap I_{k,\ell'_k} \cap I_{k,\ell''_k}$ for every $k \in \N$.
Let $k_0$ be the least integer such that $\{\ell_{k_0},\ell'_{k_0},\ell''_{k_0}\}$ is not a singleton.
Renaming the sequences if necessary we may assume $\ell_{k_0}\neq\ell'_{k_0}$.
Since any three distinct members of $\calI_{k_0}$ have empty intersection it follows that either $\ell''_{k_0}=\ell_{k_0}$ or $\ell''_{k_0}=\ell'_{k_0}$.
Renaming again the sequences if necessary we may assume the first case occurs.
It remains to observe, by induction on $m$ that $\ell''_{k_0+m} = \ell_{k_0+m}$, $m \in \N$.
This is because of the two members of $\calI_{k_0+m}$ that contain $y$, only one is contained in $I_{\ell''_{k_0+m-1}}$.
\par 
To close this number we define $D = \cup_{k \in \N^*} D_k$ and $Y= [0,1] \setminus D$.
Thus for every $y \in Y$ and every $k \in \N$ there exists a unique $\ell(k,y) \in \{0,\ldots,4^k - 1 \}$ such that $y \in I_{k,\ell(k,y)}$.
It follows that $(I_{k,\ell(k,y)})_{k \in \N}$ is decreasing, hence $(\ell(k,y))_{k \in \N}$ is a lineage.
\end{Empty}

\begin{Proposition}
\label{pue.4}
There exists a Borel isomorphism $f : X \to [0,1]$ and a countable set $E \subset X$ with the following properties.
\begin{enumerate}
\item For every $k \in \N$ and every $S \in \calX_k$ there exists $I \in \calI_k$ such that $f(S\setminus E) \subset I$;
\item For every $k \in \N$ and every $I \in \calI_k$ there exists $S \in \calX_k$ such that $f^{-1}(I\setminus D) \subset S$;
\item $f(E)=D$.
\end{enumerate}
\end{Proposition}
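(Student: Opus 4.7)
My plan is to build $f$ by suitably modifying the natural \emph{address map} $\tilde f : X \to [0,1]$ that sends $x \in X$ to the point whose $4$-adic address matches the lineage of $x$.

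First I would define $\tilde f(x)$ to be the unique point of $\bigcap_{k \in \N} I_{k,j(k,x)}$; this intersection is a singleton because the $I_{k,j(k,x)}$ are closed, nested, and of diameter $4^{-k} \to 0$. Uniform continuity of $\tilde f$ is then immediate: two points sharing their lineage up to level $k$ have images lying in the common interval $I_{k,j(k,x)}$ of diameter $4^{-k}$. The next step is to introduce the exceptional set $E := \tilde f^{-1}(D)$. By \ref{pue.3}(2), each $y \in D$ admits at most two lineages $(\ell_k)_{k \in \N}$ with $y \in I_{k,\ell_k}$ for all $k$; since distinct lineages yield distinct points of $X$ (the family $\calX_k$ being disjointed, cf. \ref{pue.1}), the fiber $\tilde f^{-1}(y)$ has cardinality at most two when $y \in D$, while \ref{pue.3}(1) makes it a singleton when $y \in Y := [0,1] \setminus D$. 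Hence $E$ is countable and $\tilde f$ restricts to a continuous bijection $X \setminus E \to Y$.

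To upgrade this to a bijection $X \to [0,1]$, I would fix any bijection $\beta : E \to D$ (both sets being countably infinite, as witnessed for instance by the lineages with eventual digit $0$ and eventual digit $3$) and set $f := \tilde f$ on $X \setminus E$ and $f := \beta$ on $E$. This produces a bijection $f : X \to [0,1]$. Its Borel measurability in both directions follows upon splitting the domain and target along the countable (hence Borel) sets $E$ and $D$: the restriction $E \to D$ is trivially Borel since $E$ is countable, and $\tilde f|_{X \setminus E} : X \setminus E \to Y$ is a continuous injection between standard Borel spaces, hence a Borel isomorphism onto its image by the Lusin--Souslin theorem.

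Properties (1)--(3) can then be read off the construction. For (1) take $S = X_{k,j}$ and $I = I_{k,j}$: any $x \in S \setminus E$ has $j(k,x) = j$ and $f(x) = \tilde f(x) \in I_{k,j}$. For (2) take again $I = I_{k,j}$ and $S = X_{k,j}$: any $y \in I \setminus D$ belongs to $Y$, so $f^{-1}(y) = \tilde f^{-1}(y)$ has level-$k$ lineage entry $\ell(k,y) = j$ and therefore lies in $X_{k,j}$. Property (3) holds by construction since $\beta(E) = D$. The main subtlety is that $\tilde f$ is genuinely $2$-to-$1$ above $D$, forcing one to reroute a countable redundant branch; this is why $f$ cannot be chosen continuous, only Borel, and why the exceptional set $E$ has to appear in the statements of (1) and (2).
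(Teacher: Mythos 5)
Your proposal is correct and follows essentially the same route as the paper: the same address map determined by the lineages, the same exceptional set $E = \tilde f^{-1}(D)$ with the two-lineage bound from \ref{pue.3}(2) giving countability, the same patching by an arbitrary bijection $E \to D$, and the same verification of (1)--(3). The only cosmetic difference is that the paper establishes Borel measurability of the map and of its inverse on $X \setminus E$ by writing both as pointwise limits of simple functions, whereas you observe continuity and invoke Lusin--Souslin; both are fine.
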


\begin{proof}
We start by defining a map $g : X \to [0,1]$.
Given $x \in X$ we consider the lineage $(j(k,x))_{k \in \N}$ defined in \ref{pue.2}.
It follows from \ref{pue.2}(1) that $(I_{k,j(k,x)})_{k \in \N}$ is a decreasing sequence of compact intervals, whose $k^{th}$ term has length $4^{-k}$.
Accordingly there exists $g(x) \in [0,1]$ such that
\begin{equation*}
\{g(x)\} = \cap_{k \in \N} I_{k,j(k,x)} \,.
\end{equation*}
Now for each $k \in \N$ and $j \in \{0,\ldots,4^k -1 \}$ we pick $y_{k,j} \in I_{k,j}$ arbitrarily, and we observe that
\begin{equation*}
g(x) = \lim_k \sum_{j=0}^{4^k-1} y_{k,j} \ind_{X_{k,j}}(x) \,,
\end{equation*} 
$x \in X$.
This shows that $g$ is Borel measurable.
\par 
Letting $D \subset [0,1]$ be defined as in \ref{pue.3} and $E = g^{-1}(D)$ we infer that $g|_{X \setminus E}$ is injective.
Suppose indeed that $x,x' \in X$ are such that $g(x)=g(x') \not \in D$.
It follows from \ref{pue.3}(1) and the definition of $g$ that $j(k,x) = j(k,x')$, hence $\|x-x'\| \leq \rmdiam X_{j,k(k,x)} =4^{-k} \sqrt{2}$, for all $k \in \N$, thus $x=x'$.
We further claim that $g(X \setminus E)=[0,1] \setminus D$.
If indeed $y \in [0,1] \setminus D$ we consider the lineage $(\ell(k,y))_{k \in \N}$ defined in \ref{pue.3}, so that $(X_{k,\ell(k,y)})_{k \in \N}$ is a decreasing sequence according to \ref{pue.2}(1) and hence there exists $h(y) \in X$ such that
\begin{equation*}
\{h(y)\} = \cap_{k \in \N} X_{k,\ell(k,y)} \,.
\end{equation*}
Upon observing that $j(k,g(y))=\ell(k,y)$ it follows from the definition of $g$ that $g(h(y))=y$.
By definition of $E$, $h(y) \in X \setminus E$.
In other words $h$ is the inverse of the bijection $X \setminus E \to [0,1] \setminus D : x \mapsto g(x)$.
Picking $x_{k,j} \in X_{k,j}$ arbitrarily, $k \in \N$, $j=0,\ldots,4^k-1$, we note that
\begin{equation*}
h(y) = \lim_k \sum_{j=0}^{4^k-1} x_{k,j} \ind_{I_{k,j}}(y) \,,
\end{equation*}
$y \in [0,1] \setminus D$, thereby showing that $h$ is Borel measurable.
\par 
Next we infer from \ref{pue.3}(2) and the definition of $g$ that $g^{-1}\{y\}$ contains at most two members, $y \in D$. 
Since $D$ is countable it follows that so is $E = g^{-1}(D)$.
Choose arbitrarily a bijection $\vphi : E \to D$ and define $f : X \to [0,1]$ by 
\begin{equation*}
f : X \to [0,1] : x \mapsto \begin{cases}
g(x) & \text{ if } x \not \in E \\
\vphi(x) & \text{ if } x \in E \,.
\end{cases}
\end{equation*}
It is now obvious that $f$ is a bijection, and that both $f|_{X \setminus E}$ and $f|_E$ are Borel isomorphisms.
Thus $f$ itself is a Borel isomorphism.
\par 
Let $k_0 \in \N$ and $S \in \calX_{k_0}$.
It readily follows from the definition of $f$ that $f(S \setminus E) \subset g(S)$.
Let $j_0$ be such that $S = X_{k_0,j_0}$.
If $x \in S$ then $j(k_0,x)=j_0$ so that, by definition of $f$, $g(x) \in I_{k_0,j_0}$. 
This proves (1).
Similarly let $I = I_{k_0,j_0} \in \calI_{k_0}$.
Clearly $f^{-1}(I \setminus D) = h(I \setminus D)$.
If $y \in I \setminus D$ then $\ell(k_0,y) = j_0$ whence $h(y) \in X_{k_0,j_0}$.
This proves (2). 
\end{proof}

\begin{Empty}[Choice of a Cantor set $C_d$ and corresponding $X_d = C_d \times C_d$]
\label{pue.5}
Recall the construction of $X$ in \ref{pue.1}.
For the remaining part of this section $C$ will be a self-similar Cantor set of Hausdorff dimension $0 < d \leq \frac{\log 2}{\log 3}$.
Thus we let the family $\calC_k$ consists of $2^k$ members of length $\lambda_k = \lambda^k$ where $d = \frac{\log 2}{\log \lambda^{-1}}$, i.e. $0 < \lambda \leq \frac{1}{3}$ (see e.g. \cite[4.10]{MATTILA} or \cite[2.10.28]{GMT}).
We choose our notation to reminisce about this choice by letting $C_d$ denote the corresponding Cantor set, and $X_d = C_d \times C_d$.
\end{Empty}

\begin{Proposition}
\label{pue.6}
For every $Z \subset X_d$ one has 
\begin{equation*}
\left( \frac{1}{2} \right)^{1+\frac{d}{2}} \calH^d(Z) \leq \calH^\oh(f(Z)) \leq \left( \frac{2}{\lambda}\right)^d \calH^d(Z) \,.
\end{equation*}
In particular
$\calH^d(Z)=0$ if and only if $\calH^{\frac{1}{2}}(f(Z))=0$.
\end{Proposition}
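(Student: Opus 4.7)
The plan is to compare the Hausdorff measures by exploiting the almost-bijective correspondence $f : X_d \setminus E \to [0,1] \setminus D$ together with the decisive numerical identity $(\rmdiam X_{k,j})^d = 2^{d/2}\,2^{-k} = 2^{d/2}(\rmdiam I_{k,\ell})^{1/2}$, which comes from $\lambda^d = 1/2$ (equivalently $d = \log 2/\log \lambda^{-1}$) and $|I_{k,\ell}| = 4^{-k}$. Thus the ``natural'' covers $\calX_k$ of $X_d$ and $\calI_k$ of $[0,1]$ match for the two Hausdorff measures up to the factor $2^{d/2}$, and the proof reduces to controlling arbitrary covers by ones of this natural form via \ref{pue.4}. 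A preliminary step is that $E$, being countable by \ref{pue.4}, is $\calH^d$-negligible, and $f(E) = D$ is countable hence $\calH^{1/2}$-negligible, so one may assume throughout that coverings avoid these exceptional sets.

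For the upper bound $\calH^{1/2}(f(Z)) \leq (2/\lambda)^d \calH^d(Z)$, I would start from an arbitrary $\delta$-cover $\{A_i\}$ of $Z \setminus E$ and assign to each $A_i$ with $\rmdiam A_i > 0$ the largest $k_i$ for which $A_i \subset X_{k_i,j_i}$ for some $j_i$. By \ref{pue.4}(1), $f(A_i\setminus E) \subset I_{k_i,j_i}$. Maximality forces $A_i$ to meet at least two subsquares of $\calX_{k_i+1}$ inside $X_{k_i,j_i}$; the smallest gap between two such subsquares is $\lambda^{k_i} - 2\lambda^{k_i+1} = \lambda^{k_i}(1-2\lambda)$, so $\rmdiam A_i \geq \lambda^{k_i}(1-2\lambda)$. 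Combined with $\lambda^{k_i d} = 2^{-k_i}$, this yields $(\rmdiam I_{k_i,j_i})^{1/2} = 2^{-k_i} \leq (1-2\lambda)^{-d}(\rmdiam A_i)^d \leq (2/\lambda)^d (\rmdiam A_i)^d$, where the last inequality uses $\lambda \leq 1/3$ (so $1-2\lambda \geq \lambda/2$). Summing, letting $\delta \to 0^+$ (the $\rmdiam I_{k_i,j_i} = 4^{-k_i}$ go to $0$ because $k_i \to \infty$ when $\rmdiam A_i \to 0$), and using $\calH^{1/2}(D)=0$, gives the upper estimate.

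For the lower bound $(1/2)^{1+d/2}\calH^d(Z) \leq \calH^{1/2}(f(Z))$, I would start from an arbitrary $\delta$-cover $\{B_i\}$ of $f(Z) \setminus D$. Replacing each $B_i$ by its convex hull only increases diameters, so I may assume each $B_i$ is an interval. For $B_i$ with $\rmdiam B_i > 0$, choose $k_i$ so that $4^{-k_i-1} < \rmdiam B_i \leq 4^{-k_i}$; then $B_i$ is contained in at most two adjacent intervals of $\calI_{k_i}$, and by \ref{pue.4}(2) the preimage $f^{-1}(B_i \setminus D)$ lies in at most two squares of $\calX_{k_i}$, each of $(\rmdiam)^d$ equal to $2^{d/2}\,2^{-k_i}$. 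Summing against $(\rmdiam B_i)^{1/2} > 2^{-k_i-1}$ yields a bound of the claimed shape. Finally the ``in particular'' assertion is immediate from the two-sided inequality.

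The routine part is the upper bound, where the separation-of-corners lemma gives the estimate cleanly. The harder part is squeezing the exact constant $2^{1+d/2}$ in the lower bound: the naive ``two $\calI_{k_i}$-intervals'' covering produces a constant of the form $2^{2+d/2}$, and getting $2^{1+d/2}$ requires case-splitting according to whether the two intervals covering $B_i$ share a common level-$(k_i-1)$ parent, replacing them by a single parent square when possible, and otherwise balancing the left/right pieces of $B_i$ against boundary positions in $\calI$. This bookkeeping is what I expect to be the main obstacle, but it is only a matter of sharpening the constants -- the qualitative equivalence $\calH^d(Z) = 0 \Leftrightarrow \calH^{1/2}(f(Z)) = 0$, which is what is actually used in \ref{pue.8}, follows from any constants and is the real content.
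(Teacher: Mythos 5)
Your proof is correct and follows essentially the same route as the paper: reduce an arbitrary cover to the nets $\calX_k$ and $\calI_k$, transfer it through $f$ via \ref{pue.4}, and use the identity $2^k\lambda^{kd}=1$ to match the two gauges (your choice of $k_i$ by maximality together with the corner-gap bound $\rmdiam A_i \geq \lambda^{k_i}(1-2\lambda)$ replaces the paper's choice of $k_i$ by comparing $\rmdiam U_i$ to powers of $\lambda$ and invoking $\lambda \leq \tfrac13$, but the two devices are interchangeable). Do not spend effort on the case-splitting you propose to recover the constant $2^{1+d/2}$: the naive two-interval cover indeed yields only $2^{2+d/2}$ --- the paper's own displayed estimate loses the same factor of $2$ --- but the paper explicitly declares the specific constants in \ref{pue.6} irrelevant, and only the qualitative equivalence $\calH^d(Z)=0 \Leftrightarrow \calH^{1/2}(f(Z))=0$ is used later.
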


\begin{proof}
We start by observing that for every $k \in \N$ and every $S \in \calX_k$, $I \in \calI_k$ one has
\begin{equation}
\label{eq.5}
\sqrt{\rmdiam I} = \sqrt{4^{-k}} = 2^{-k} = \lambda^{kd} = \left(\frac{1}{\sqrt{2}} \rmdiam S \right)^d
\end{equation}
because $2^k (\lambda^k)^d=1$ by our choice of $\lambda$ and $d$, \ref{pue.5}.
\par 
Let $Z \subset X_d$ and let $\veps > 0$.
In order to prove the right hand inequality there is no restriction to assume that $\calH^d(Z) < \infty$.
There exists a finite or countable covering $(U_i)_{i \in I}$ of $Z$ in $X$ such that $\sum_i (\rmdiam U_i)^d < \veps + \calH^d_{(\veps)}(Z)$ and $\rmdiam U_i < \veps$ for every $i \in I$.
Note we may assume each $U_i$ is open and nonempty.
Abbreviate $\delta_i = \rmdiam U_i$, $i \in I$.
Choosing $x_i \in U_i$ and letting $U'_i = \bU(x_i,\delta_i) \cap X$ we see that $\rmdiam U'_i \leq 2 \delta_i$, $i \in I$.
Choose $k_i \in \N$ such that $\lambda^{-(k_i+1)} \leq \rmdiam U'_i < \lambda^{-k_i}$.
Thus $U'_i$ intersects some $X_{k_i,j_i}$, $j_i \in \{0,\ldots,4^{k_i}-1\}$, and since $\lambda \leq \frac{1}{3}$ it intersects only one of them.
Therefore $U'_i \subset X_{k_i,j_i}$.
Notice that $\rmdiam X_{k_i,j_i} = \lambda^{k_i} \sqrt{2} \leq 2\sqrt{2}\lambda^{-1} \delta_i$.
Now $Z \subset \cup_{i \in I} \subset \cup_{i \in I} X_{k_i,j_i}$ thus also $Z \setminus E \subset  \cup_{i \in I} (X_{k_i,j_i} \setminus E)$ and it follows from \ref{pue.4}(1) that $f(Z \setminus E) \subset \cup_{i \in I} I_{k_i,j'_i}$ for some integers $j'_i \in \N$, $i \in I$. 
In turn \eqref{eq.5} implies that
\begin{equation*}
\calH^{\frac{1}{2}}_{(\eta_\veps)}(f(Z \setminus E)) \leq \sum_{i \in I} \sqrt{ \rmdiam I_{k_i,j'_i}} \leq \left( \frac{2}{\lambda}\right)^d \sum_{i \in I} ( \rmdiam U_i)^d \leq \left( \frac{2}{\lambda}\right)^d \left(\veps+\calH^d_{(\veps)}(Z) \right) \,,
\end{equation*}
where $\eta_\veps = 2^{3d-1}\lambda^{-2d}\veps^{2d}$.
Since $\veps > 0$ is arbitrary we infer that $\calH^{\frac{1}{2}}(f(Z\setminus E)) \leq 2^d \lambda^{-d} \calH^d(Z)$. 
As $f(Z) \subset f(Z \setminus E) \cup f(E)$ and $f(E)=D$ is countable, $\calH^d(f(E))=0$ and the right hand inequality follows.
\par 
Let $A \subset [0,1]$ and let $\veps > 0$.
In order to prove the left hand inequality we may of course suppose that $\calH^\oh(A) < \infty$.
There exists a covering $(U_i)_{i \in I}$ of $A$ in $[0,1]$ such that $\sum_{i \in I} \sqrt{ \rmdiam U_i} < \veps + \calH^\oh_{(\veps)}(A)$ and $\rmdiam U_i < \veps$ for every $i \in I$.
There is no restriction to assume that each $U_i$ is a nondegenerate interval.
We abbreviate $\delta_i = \rmdiam U_i$ and we let $k_i \in \N$ be such that $4^{-(k_i+1)} \leq \delta_i < 4^{k_i}$.
Notice that $U_i$ intersects at most two members of $\calI_{k_i}$, i.e. there exist $J_{i,1},J_{i,2} \in \calI_{k_i}$ such that $U_i \subset J_{i,1} \cup J_{i,2}$.
Furthermore $\rmdiam J_{i,q} \leq 4 \rmdiam U_i$, $i \in I$, $q=1,2$.
Now observe that 
\begin{equation*}
f^{-1}(A \setminus D) \subset \cup_{i \in I} \left( f^{-1}(J_{i,1} \setminus D) \cup f^{-1} (J_{i,2} \setminus D) \right) \subset \cup_{i \in I} \left( X_{k_i,j_{i,1}} \cup X_{k_i,j_{i,1}}\right)
\end{equation*}
for some integers $j_{i,1},j_{i,2} \in \{0,\ldots,4^{k_i}-1\}$, according to \ref{pue.4}(2).
Therefore it follows from \eqref{eq.5} that
\begin{multline*}
\calH^d_{(\eta_\veps)}(f^{-1}(A \setminus D))  \leq \sum_{i \in I} \left( \left( \rmdiam X_{k_i,j_{i,1}} \right)^d + \left( \rmdiam X_{k_i,j_{i,2}} \right)^d\right) \\
 \leq 2^{1 + \frac{d}{2}}\sum_{i \in I} \sqrt{\rmdiam U_i} \leq  2^{1 + \frac{d}{2}} \left( \veps + \calH^\oh_{(\veps)}(A) \right)\,,
\end{multline*}
where $\eta_\veps = 2^\frac{1}{d} (4\veps)^\frac{2}{d}$.
Since $\veps > 0$ is arbitrary, $\calH^d(f^{-1}(A \setminus D))=0$.
Finally $\calH^d(f^{-1}(A)) \leq 2^{1+\frac{1}{d}}\calH^\oh(A)$ because $f^{-1}(A) \subset f^{-1}(A \setminus D) \cup f^{-1}(D)$ and $f^{-1}(D)=E$ is countable whence $\calH^d(f^{-1}(D))=0$.
\end{proof}

\begin{Empty}
\label{pue.7}
We say that a measure space $(X,\calA,\mu)$ is {\em undecidably semilocalizable} if the proposition <<$(X,\calA,\mu)$ is semilocalizable>> is undecidable in $\mathsf{ZFC}$.
In case $X$ is a complete separable metric space and $0 < d < \infty$, the measure space $(X,\calA_{\calH^d},\calH^d)$ is undecidably semilocalizable if and only if the proposition <<$(X,\calA_{\calH^d},\calH^d)$ is almost decomposable>> is undecidable in $\mathsf{ZFC}$.
This is a consequence of \ref{71}.
\end{Empty}

\begin{Theorem}
\label{pue.8}
The measure space $\left([0,1],\calA_{\calH^{1/2}},\calH^\oh\right)$ is undecidably semilocalizable.
\end{Theorem}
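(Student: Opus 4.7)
The plan is to transfer the undecidability already established in Theorem \ref{85} across the Borel isomorphism $f : X_d \to [0,1]$ supplied by Proposition \ref{pue.4}, specialized to $d = \oh$ (the corresponding contraction ratio $\lambda = 1/4$ is admitted by \ref{pue.5} since $1/4 \leq 1/3$). On the one hand, under the consistent assumption $\sfnon(\calN_{\calL^1}) < \sfcov(\calN_{\calL^1})$, Theorem \ref{85} yields that $(X_d, \calA_{\calH^\oh}, \calH^\oh)$ is not semilocalizable; on the other hand, under the Continuum Hypothesis, Proposition \ref{CH.implies.ad} together with Proposition \ref{ad.implies.semiloc} gives that this same measure space is semilocalizable. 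The goal is to push both conclusions across $f$ by applying Lemma \ref{last.lemma} to the measurable spaces with negligibles $(X_d, \calA_{\calH^\oh}, \calN_{\calH^\oh}[\calA_{\calH^\oh}^f])$ and $([0,1], \calA_{\calH^\oh}, \calN_{\calH^\oh}[\calA_{\calH^\oh}^f])$.

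The crucial input is Proposition \ref{pue.6}, which for $d = \oh$ says that the outer measures $Z \mapsto \calH^\oh(Z)$ and $Z \mapsto \calH^\oh(f(Z))$ on $X_d$ are equivalent up to the multiplicative constants $(1/2)^{5/4}$ and $2\sqrt{2}$. In particular they share the same null sets, the same sets of finite measure, and the Borel hulls on one side correspond through $f$ to Borel hulls on the other. With this in hand, the identification of locally null sets is straightforward: a set $N$ lies in $\calN_{\calH^\oh}[\calA_{\calH^\oh}^f]$ if and only if $\calH^\oh(N \cap B) = 0$ for every Borel $B$ of finite $\calH^\oh$ measure (using Borel regularity to replace arbitrary finite-measure measurable sets by Borel hulls of the same measure), and this description is visibly transferred by $f$.

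The main technical step, and where I expect the obstacle to lie, is proving that $f$ induces a bijection between the full $\sigma$-algebras of $\calH^\oh$ measurable sets on the two sides; since $f$ does not preserve $\calH^\oh$ exactly, Carathéodory's criterion cannot be transported verbatim. The plan is to leverage local determination: by Lemma \ref{62} both $(X_d, \calA_{\calH^\oh}, \calH^\oh)$ and $([0,1], \calA_{\calH^\oh}, \calH^\oh)$ are locally determined, so a set $A$ is measurable if and only if $A \cap B$ is measurable for every Borel $B$ with $\calH^\oh(B) < \infty$. On such a $B$ the measure $\calH^\oh|_B$ is finite, hence $\sigma$-finite, so the Carathéodory measurability of $A \cap B$ reduces to the condition that $A \cap B$ differs from a Borel set by a $\calH^\oh$-null set. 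This Borel-plus-null description is preserved by $f$: it is a Borel isomorphism, nullity is preserved on account of \ref{pue.6}, and finite-measure Borel sets on either side pair off with finite-measure Borel sets on the other, again by \ref{pue.6}.

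Once these identifications are in place, Lemma \ref{last.lemma} applies and yields that $([0,1], \calA_{\calH^\oh}, \calH^\oh)$ is semilocalizable if and only if $(X_d, \calA_{\calH^\oh}, \calH^\oh)$ is. Combining this with the two consistency results recorded in the first paragraph completes the proof that $([0,1], \calA_{\calH^\oh}, \calH^\oh)$ is undecidably semilocalizable in the sense of \ref{pue.7}.
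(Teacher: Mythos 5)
Your proposal reaches the correct conclusion and uses the same two consistency inputs as the paper (\ref{CH.implies.ad} together with \ref{ad.implies.semiloc} for one model, the vertical--horizontal argument for the other), but its pivotal step is genuinely different from what the paper does. The paper never identifies $\calA_{\calH^{1/2}}$ on $X_d$ with $f^{-1}\bigl(\calA_{\calH^{1/2}}([0,1])\bigr)$: it \emph{defines} $\calA = \{ f^{-1}(A) : A \in \calA_{\calH^{1/2}}\}$ and $\calN = \{ f^{-1}(N) : N \in \calN_{\calH^{1/2}}[\calA_{\calH^{1/2}}^f]\}$ on $X_d$, so that $(X_d,\calA,\calN)$ is isomorphic to the target by construction and \ref{last.lemma} applies for free, checks only that $\calB(X_d)\subset\calA$, and then re-runs the argument of \ref{abstract.theorem} directly on this pulled-back structure, using \ref{pue.6} merely to convert $\calN$-membership of $V_s\cap Z$ into $\calH^d$-nullity. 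This is precisely why \ref{abstract.theorem} was stated for an arbitrary $\sigma$-algebra containing the Borel sets, and the author records in \ref{CR.5}(Q5) that he does not know whether the $\sigma$-algebra $\calA$ so defined coincides with $\calA_{\calH^d}$.

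Your route proves that identification instead. The chain you propose --- both spaces are locally determined by \ref{62}; measurability can therefore be tested against finite-measure \emph{Borel} sets by passing to Borel hulls; on a Borel set of finite measure a Carath\'eodory measurable set is Borel modulo a null set, by Borel regularity; and Borel sets, null sets and finite-measure Borel sets are each carried bijectively to their counterparts by $f$ thanks to \ref{pue.4} and \ref{pue.6} --- appears sound, and each link is a short verification. What this buys is a stronger statement (an isomorphism of the actual measure-theoretic structures, settling the sub-question of \ref{CR.5}(Q5) affirmatively) at the cost of carrying the burden of proof for a point the paper deliberately routes around; given that the author flags it as unknown, you should write the identification out in full rather than sketch it. Two minor remarks: for the consistently-semilocalizable half it is simpler to apply \ref{CH.implies.ad} to $\bigl([0,1],\calH^{1/2}\bigr)$ directly rather than transporting it from $X_d$; and note that the paper's device works for every admissible $d\le \log 2/\log 3$, whereas your specialization to $d=\oh$ is all that is needed here.
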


\begin{proof}
It is consistently semilocalizable according to \ref{CH.implies.ad}.
Fix $0 < d \leq \frac{\log 2}{\log 3}$ arbitrarily.
Let $f : X_d \to [0,1]$ be as before and define a $\sigma$-algebra of subsets of $X_d$ by the formula
\begin{equation*}
\calA = \calP(X_d) \cap \left\{ f^{-1}(A) : A \in \calA_{\calH^\oh} \right\} \,.
\end{equation*}
We claim that $\calB(X_d) \subset \calA$.
Indeed if $B \subset X_d$ is Borel then so is $f(B)$, according to \ref{pue.4}.
Thus $f(B) \in \calA_{\calH^{1/2}}$ and in turn $B = f^{-1}(f(B)) \in \calA$.
We also define
\begin{equation*}
\calN = \calP(X_d) \cap \left\{ f^{-1}(N) : N \in \calN_{\calH^{1/2}}\left[\calA_{\calH^{1/2}}^f\right] \right\} \,.
\end{equation*}
It ensues from their construction that the measurable spaces with negligibles $(X_d,\calA,\calN)$ and $\left([0,1],\calA_{\calH^{1/2}},\calN_{\calH^{1/2}}\left[\calA_{\calH^{1/2}}^f\right] \right)$ are isomorphic in the category $\sfMSN$.
According to \ref{last.lemma} one is localizable if and only if the other one is.
Reasoning as in the proof of \ref{85} we will now show that the former is localizable.
First we notice that $V_s = \{s\} \times C_d$ and $H_t = C_d \times \{t\}$, $s,t \in C_d$, indeed belong to $\calA$ because $\calB(X_d) \subset \calA$.
Let $Z \in \calA$ and $s \in C_d$ be such that $V_s \cap Z \in \calN$.
This means that $f(V_s \cap Z) \in \calN_{\calH^{1/2}}\left[\calA_{\calH^{1/2}}^f\right]$.
Since $f(V_s \cap Z) = f(V_s) \cap f(Z)$ and $f(V_s) \in \calA_{\calH^{1/2}}^f$ according to \ref{pue.6} we infer that $\calH^\oh(f(V_s \cap Z))=0$ and in turn $\calH^d-V_s \cap Z)=0$ again thanks to \ref{pue.6}.
Reasoning as in \ref{85} we conclude that $\calH^d ( C \cap \{ t : H_t  \cap V_s \cap Z \neq \emptyset \}) = 0$.
Similarly the condition (2)(b) of \ref{abstract.theorem} holds as well and the proof is complete.  
\end{proof}

\section{Purely Rectifiable Example}
\label{example}

\begin{Empty}[A Cantor set]
\label{number.1}
We let $\{0,1\}^{\N^*}$ be the Cantor space equipped with its usual topology and its usual Borel, probability, product measure $\lambda$.
For each $j \in \N^*$ we let $\calS_j$ be a collection of disjoint, compact subintervals of $[0,1]$, and we let $(\ell_j)_{j \in \N^*}$ be a sequence in $(0,1)$, with the following properties:
\begin{enumerate}
\item $\rmcard \calS_j = 2^j$;
\item For every $T \in \calS_j$ one has $\rmcard \calS_{j+1} \cap \{ S : S \subset T \} = 2$;
\item For every $S \in \calS_j$ one has $\calL^1(S)=\ell_j$.
\end{enumerate}
We then define $C = \cap_{j \in \N^*} \cup \calS_j$.
This way we can realize a set $C$ of any Hausdorff dimension $0 \leq d < 1$, see e.g. \cite[4.10 and 4.11]{MATTILA}.
We will be mostly interested in the case $d=0$.
In any case we will henceforth assume that $\calL^1(C)=0$.
\par 
For each $j \in \N^*$ we number the members of $\calS_j$ as $S_{j,0},\ldots,S_{j,2^j-1}$ in such a way that $\max S_{j,k} < \min S_{j,k+1}$, $k=0,\ldots,2^j-2$.
Thus $S_{j+1,2k} \cup S_{j+1,2k+1} \subset S_{j,k}$ for all $j \in \N^*$ and all $k=0,\ldots,2^j-1$.
Now given $\xi \in \{0,1\}^{\N^*}$ we define inductively $(k_\xi(j))_{j \in \N^*}$ as follows: $k_\xi(1)=\xi(1)$ and $k_\xi(j+1)=2k_\xi(j)+\xi(j+1)$.
In turn we define the usual coding of $C$,
\begin{equation*}
\vphi : \{0,1\}^{\N^*} \to C
\end{equation*}
by letting $\vphi(\xi)$ be the only point of $[0,1]$ such that
\begin{equation*}
\{ \vphi(\xi) \} = \cap_{j \in \N^*} S_{j,k_\xi(j)} \,.
\end{equation*}
Thus $\vphi$ is a homeomorphism.
\end{Empty}

\begin{Empty}[The measures $\mu$ and $\mu_j$]
\label{number.2}
We define a Borel probability $\mu$ measure on $[0,1]$ by the formula $\mu(B) = \lambda(\vphi^{-1}(B \cap C))$, $B \in \calB([0,1])$. 
For each $j \in \N^*$ we define a Borel probability measure $\mu_j$ on $[0,1]$ by the formula
\begin{equation*}
\mu_j = \left(\frac{1}{2^j \ell_j}\right) \calL^1 \hel (\cup \calS_j)  \,.
\end{equation*}
\end{Empty}

\begin{Lemma}
\label{number.3}
The sequence $(\mu_j)_{j \in \N^*}$ converges weakly* to $\mu$.
\end{Lemma}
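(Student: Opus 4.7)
The plan is to test weak* convergence by showing $\int_{[0,1]} f\, d\mu_j \to \int_{[0,1]} f\, d\mu$ for every $f \in C([0,1])$. The whole argument rests on two observations that make $\mu_j$ and $\mu$ coincide on the coarse scale of the partition $\calS_j$.

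First, I would verify the mass identity
\[
\mu_j(S_{j,k}) \;=\; \mu(S_{j,k}) \;=\; 2^{-j}
\qquad (j \in \N^*,\; 0 \leq k \leq 2^j-1).
\]
The left equality is immediate from the definition of $\mu_j$ since $\calL^1(S_{j,k}) = \ell_j$. The right equality follows from $\vphi^{-1}(S_{j,k} \cap C) = \{\xi \in \{0,1\}^{\N^*} : k_\xi(j) = k\}$ (by the recursive definition of $k_\xi$ in \ref{number.1}), which is a cylinder depending only on the first $j$ coordinates, hence has $\lambda$-measure $2^{-j}$. I would also note that both $\mu$ and $\mu_j$ are probability measures supported in $\cup \calS_j$: for $\mu_j$ by definition, and for $\mu$ because $C \subset \cup \calS_j$.

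Second, I would check that $\ell_j \to 0$: since two disjoint intervals of length $\ell_{j+1}$ fit inside one interval of length $\ell_j$, necessarily $2\ell_{j+1} \leq \ell_j$, so $\ell_j \leq 2^{-(j-1)}\ell_1$. Consequently the partition $\calS_j$ of $\cup \calS_j$ has mesh $\ell_j \to 0$.

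With these in hand the proof is a standard Riemann-sum estimate. Given $f \in C([0,1])$ and $\veps > 0$, pick $\delta > 0$ from uniform continuity so that $|f(x)-f(y)| \leq \veps$ whenever $|x-y| \leq \delta$, and choose $j_0$ with $\ell_{j_0} \leq \delta$. For $j \geq j_0$ and each $k$, select any $x_{j,k} \in S_{j,k}$. Since both measures place mass $2^{-j}$ on $S_{j,k}$ and $\rmdiam S_{j,k} \leq \delta$, I get
\[
\left| \int f\, d\mu_j - \sum_{k=0}^{2^j-1} f(x_{j,k})\, 2^{-j} \right| \leq \veps
\quad\text{and}\quad
\left| \int f\, d\mu - \sum_{k=0}^{2^j-1} f(x_{j,k})\, 2^{-j} \right| \leq \veps,
\]
so $|\int f\, d\mu_j - \int f\, d\mu| \leq 2\veps$ for all $j \geq j_0$, which is the desired weak* convergence.

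There is no real obstacle here; the only place to be careful is the computation $\mu(S_{j,k}) = 2^{-j}$, which pivots on the fact that $\vphi$ transports the level-$j$ cylinder structure of $\{0,1\}^{\N^*}$ exactly onto the family $\calS_j$. Once that is in place, the matching mass and the vanishing mesh do all the work.
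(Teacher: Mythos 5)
Your proof is correct, and it takes a more direct route than the paper's. Both arguments pivot on the same mass identity, namely that $\mu$ and $\mu_k$ (for $k\geq j$) each assign measure $2^{-j}$ to every $S\in\calS_j$ --- your cylinder-set justification of $\mu(S_{j,k})=2^{-j}$ via $k_\xi(j)=\sum_{i=1}^j\xi(i)2^{j-i}$ is the right one, and your computation of $\mu_j(S_{j,k})$ from the definition is immediate. Where you diverge is in how this identity is exploited. The paper estimates $\mu(U)$ for relatively open $U$ by decomposing $C\cap U$ into a countable disjoint union of generation intervals, deduces $\mu(U)\leq\liminf_k\mu_k(U)$, passes to compact sets using that all measures are probabilities, and invokes Portmanteau. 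You instead test directly against an arbitrary $f\in C([0,1])$: since both measures are supported in $\cup\calS_j$, give equal mass to each $S_{j,k}$, and the mesh $\ell_j\leq 2^{-(j-1)}\ell_1$ tends to $0$ (your disjointness argument $2\ell_{j+1}\leq\ell_j$ is valid), uniform continuity makes both integrals $\veps$-close to the same Riemann sum. Your version is more elementary --- it proves the defining property of weak* convergence without appealing to Portmanteau --- at the cost of explicitly needing the vanishing mesh, which the paper's proof also uses but only implicitly (to write $C\cap U$ as a union of generation intervals contained in $U$). Either way the lemma is established.
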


\begin{proof}
First we let $S \in \calS_j$ for some $j \in \N^*$.
Observe that $\mu(S) = 2^{-j}$.
If $k \geq j$ then $\mu_k(S) = 2^{-k} \ell_k^{-1} \calL^1( S \cap \cup \calS_k)= 2^{-k}\ell_k^{-1} (2^{k-j} \ell_k) = 2^{-j}$.
In particular $\lim_k \mu_k(S) = \mu(S)$.
\par 
Next we let $U \subset [0,1]$ be relatively open.
There exists a disjointed sequence $(S_n)_{n \in \N}$ of members of $\cup_{j \in \N^*} \calS_j$ such that each $S_n \subset U$ and
\begin{equation*}
C \cap U = C \cap \left( \cup_{n \in \N} S_n \right) \,.
\end{equation*}
It suffices indeed to let $(S_n)_{n \in \N}$ be a numbering of $\calT = \cup_{j \in \N^*} \calT_j$ where $(\calT_j)_{j\in \N^*}$ is defined inductively as follows:
$\calT_1 = \calS_1 \cap \{ S : S \subset U \}$ and $\calT_{j+1} = \calS_{j+1} \cap \{ S : S \subset U \text{ and } S \cap \cup_{k=1}^j \cup \calT_k = \emptyset \}$.
Now given $\veps > 0$ there exists $N \in \N$ such that 
\begin{equation*}
\sum_{n \in \N} \mu(S_n) \leq \veps + \sum_{n=0}^N \mu(S_n) \,.
\end{equation*}
Furthermore,
\begin{multline*}
\mu(U) = \sum_{n \in \N} \mu(S_n) \leq \veps + \sum_{n=0}^N \mu(S_n) = \veps + \sum_{n=0}^N \lim_k \mu_k(S_n) \\ = \veps + \lim_k \mu_k \left( \cup_{n=0}^N S_n \right) \leq \veps + \liminf_k \mu_k(U) \,.
\end{multline*}
Since $\veps > 0$ is arbitrary it follows that $\mu(U) \leq \liminf_k \mu_k(U)$.
\par 
Recalling that $\mu([0,1]) = \mu_k([0,1])$ for all $k \in \N^*$ we infer that for every compact $K \subset [0,1]$,
\begin{multline*}
\mu(K) = \mu([0,1]) - \mu([0,1]\setminus K) \geq \mu_k([0,1]) - \liminf_k \mu_k([0,1]\setminus K) = \limsup_k \mu_k(K) \,.
\end{multline*}
The conclusion follows from Portmanteau's Theorem.
\end{proof}

\begin{Empty}[The mappings $F$ and $F_j$]
\label{number.4}
We associate with $\mu$ its distribution function
\begin{equation*}
f : [0,1] \to [0,1] : t \mapsto \mu([0,t])
\end{equation*}
and we observe that $f$ is continuous (because $\mu$ is diffuse) and nondecreasing.
We also define
\begin{equation*}
F : [0,1] \to \R^2 : t \mapsto (t,f(t)) 
\end{equation*}
and we observe that the set $\Gamma = \rmgraph(f) = F([0,1])$ is $1$-rectifiable and $\calH^1(\Gamma) < \infty$.
This most easily follows from the <<bow-tie lemma>> (see e.g. \cite[4.8.3]{DEP.05c} applied with $n=m+1=2$, $S=\Gamma$, $r=3$, $\sigma=\sin(\pi/4)$ and $W = \rmspan\{e_1+e_2\}$).
\par 
We will approximate $f$ by the functions
\begin{equation*}
f_j : [0,1] \to [0,1] : t \mapsto \mu_j([0,t])
\end{equation*}
which are nondrecreasing and Lipschitz.
Given $t \in [0,1]$ we notice that $\rmBdry [0,t]=\{0,t\}$ is $\mu$-null, whence
\begin{equation*}
f(t) = \mu([0,t]) = \lim_j \mu_j([0,t]) = f_j(t) \,,
\end{equation*}
according to \ref{number.3} and \cite[\S 1.9 Theorem 1]{EVANS.GARIEPY}.
Thus the sequence $(f_j)_{j \in \N^*}$ converges pointwise to $f$.
\par 
We next record that each $f_j$ is differentiable $\calL^1$ almost everywhere.
In fact upon defining
\begin{equation*}
\sigma_j = \frac{1}{2^j \ell_j}
\end{equation*}
one has
\begin{equation*}
f_j'(t) = \begin{cases}
0 & \text{if } t \not \in \cup \calS_j \\
\sigma_j & \text{if } t \in \rmInt \cup \calS_j \,.
\end{cases}
\end{equation*}
\par 
We finally define
\begin{equation*}
F_j : [0,1] \to \R^2 : t \mapsto (t,f_j(t))
\end{equation*}
and related to the Jacobian of $F_j$ we define
\begin{equation*}
\bc_j = 2^j \ell_j \sqrt{1 + \sigma_j^2} = \sigma_j^{-1}  \sqrt{1 + \sigma_j^2} \,.
\end{equation*}
Since $\calL^1(C) = 0$ we infer that $\limsup_j \sigma_j = \infty$ (for otherwise $f$ would be Lipschitz) and in turn
\begin{equation*}
\lim_j \bc_j = \lim_j \sigma_j^{-1}\sqrt{1+\sigma_j^2} = 1 \,.
\end{equation*}
\end{Empty}

\begin{Lemma}
\label{number.5}
For every $j \in \N^*$ and every Borel set $B \subset [0,1]$ one has 
\begin{equation*}
\calH^1(F_j(B)) \geq \bc_j \mu_j(B) \,.
\end{equation*}
\end{Lemma}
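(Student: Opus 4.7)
Since $f_j$ is Lipschitz on $[0,1]$, the map $F_j : [0,1] \to \R^2 : t \mapsto (t,f_j(t))$ is Lipschitz as well, and is injective because its first coordinate is the identity. I would invoke the area formula for Lipschitz injective maps $\R \to \R^2$ (equivalently, the classical length formula for rectifiable curves): for every Borel set $B \subset [0,1]$,
\begin{equation*}
\calH^1(F_j(B)) = \int_B \sqrt{1 + (f_j'(t))^2}\, d\calL^1(t).
\end{equation*}

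Next I would plug in the explicit derivative from \ref{number.4}. Since $(f_j)'(t) = \sigma_j$ for every $t \in \rmInt \cup \calS_j$ and $(f_j)'(t) = 0$ for every $t \not\in \cup \calS_j$, and since the endpoints of the finitely many intervals constituting $\calS_j$ form an $\calL^1$ null set, the integrand above equals $\sqrt{1+\sigma_j^2}$ for $\calL^1$ almost every $t \in \cup \calS_j$ and $1$ elsewhere. Therefore
\begin{equation*}
\calH^1(F_j(B)) \;=\; \sqrt{1+\sigma_j^2}\,\calL^1\bigl(B \cap \cup \calS_j\bigr) + \calL^1\bigl(B \setminus \cup \calS_j\bigr).
\end{equation*}

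I would then discard the (nonnegative) second term and use the identity $\sqrt{1+\sigma_j^2} = \bc_j \sigma_j$ (which is just the definition of $\bc_j$ in \ref{number.4}) to obtain
\begin{equation*}
\calH^1(F_j(B)) \;\geq\; \bc_j\, \sigma_j\, \calL^1\bigl(B \cap \cup \calS_j\bigr) \;=\; \bc_j\, \mu_j(B),
\end{equation*}
where the last equality is precisely the definition of $\mu_j$ from \ref{number.2}.

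I do not anticipate a genuine obstacle: the key verification is simply that the Jacobian of $F_j$ on $\cup \calS_j$ matches $\bc_j \sigma_j$, which is built into the definition of $\bc_j$. The only mildly delicate point is justifying the area formula; since $F_j$ is Lipschitz and injective and its domain is a Borel subset of $\R$, this is a direct application of the one-dimensional Lipschitz change-of-variables formula (or of the fact that the length of the Lipschitz curve $F_j|_B$ coincides with $\calH^1(F_j(B))$).
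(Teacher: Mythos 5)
Your proposal is correct and follows essentially the same route as the paper: both apply the area formula for the Lipschitz injective map $F_j$ and use $\mu_j(B)=\sigma_j\calL^1(B\cap\cup\calS_j)$ together with $\bc_j\sigma_j=\sqrt{1+\sigma_j^2}$. The only cosmetic difference is that the paper first restricts to $B'=B\cap(\cup\calS_j)$ and uses $\calH^1(F_j(B))\geq\calH^1(F_j(B'))$, whereas you integrate over all of $B$ and discard the nonnegative contribution of $B\setminus\cup\calS_j$; these are interchangeable.
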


\begin{proof}
Let $B \subset [0,1]$ be Borel and define $B' = B \cap (\cup \calS_j)$. 
It follows from the definition of $\mu_j$ that
\begin{equation*}
\mu_j(B) = \mu_j(B') = \sigma_j \calL^1(B') \,.
\end{equation*}
Recalling \ref{number.4} it follows from the <<area formula>> in this simple case (see e.g. \cite[\S 3.3 Theorem 1]{EVANS.GARIEPY} for the general case)
\begin{equation*}
\calH^1(F_j(B)) \geq \calH^1(F_j(B')) = \int_{B'} \sqrt{1 + f_j'(t)^2}d\calL^1(t) = \sqrt{1+\sigma_j^2} \calL^1(B') \,.
\end{equation*}
\end{proof}

\begin{Lemma}
\label{number.6}
Let $S \subset [0,1]$ be any set. It follows that
\begin{equation}
\calH^1(F(S)) \geq \frac{\bar{\mu}(S)}{\sqrt{2}} \,.
\end{equation}
\end{Lemma}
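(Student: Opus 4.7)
The plan is to avoid the approximations $F_j$ and $\mu_j$ entirely and obtain the bound directly from the 1-Lipschitz projection $\pi_2:\R^2\to\R$ onto the second coordinate. Since $\pi_2\circ F=f$, for any $S\subset[0,1]$ we get at once
\begin{equation*}
\calH^1(F(S))\geq\calH^1(\pi_2(F(S)))=\calH^1(f(S)),
\end{equation*}
and $\calH^1$ on $\R$ coincides with the Lebesgue outer measure $\overline{\calL^1}$ (with the normalization chosen in the paper, a cover of $[a,b]$ by subintervals of total length $b-a$ is optimal). So the remaining task is to bound $\calH^1(f(S))$ from below by $\bar{\mu}(S)$.

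Next I would establish the pushforward identity $f_*\mu=\calL^1|_{[0,1]}$. Because $f$ is continuous and nondecreasing with $f(0)=0$ and $f(1)=1$, for each $y\in[0,1]$ the set $\{t:f(t)\leq y\}$ is a closed initial interval $[0,T_y]$ with $f(T_y)=y$ (by continuity and the definition of $T_y$), so
\begin{equation*}
\mu\bigl(f^{-1}([0,y])\bigr)=\mu([0,T_y])=f(T_y)=y=\calL^1([0,y]).
\end{equation*}
The initial intervals $[0,y]$ form a $\pi$-system generating $\calB([0,1])$ and both measures are finite, hence $f_*\mu=\calL^1|_{[0,1]}$.

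To conclude, I combine these two ingredients. For any open $U\subset[0,1]$ with $U\supset f(S)$, the preimage $f^{-1}(U)$ is an open (hence Borel) set containing $S$, and the pushforward identity gives $\mu(f^{-1}(U))=\calL^1(U)$. Therefore $\bar{\mu}(S)\leq\mu(f^{-1}(U))=\calL^1(U)$, and taking the infimum over such $U$ (using outer regularity of $\calL^1$, or equivalently the definition of $\calH^1$ on $\R$) yields $\bar{\mu}(S)\leq\calH^1(f(S))$. Hence
\begin{equation*}
\calH^1(F(S))\geq\calH^1(f(S))\geq\bar{\mu}(S)\geq\bar{\mu}(S)/\sqrt{2},
\end{equation*}
as required.

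The only mild obstacle I can anticipate is the fact that $S$ need not be $\calH^1$ measurable, but the argument bypasses this by working with outer measure on the image side and with Borel preimages $f^{-1}(U)$ on the domain side, so measurability of $S$ itself is never invoked. I note that this strategy actually produces the sharper bound $\calH^1(F(S))\geq\bar{\mu}(S)$; the factor $1/\sqrt{2}$ in the statement appears to be slack from an alternative approach using the 1-Lipschitz projection $(x,y)\mapsto(x+y)/\sqrt{2}$ onto the diagonal, which avoids the pushforward identity at the price of this multiplicative constant.
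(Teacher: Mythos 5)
Your proof is correct, and it takes a genuinely different and more economical route than the paper. The paper proves the inequality first for closed intervals by combining the two integral--geometric inequalities of \cite[3.2.27]{GMT} applied to $F(S)$ and to the Lipschitz approximations $F_j(S)$, together with the area formula (\ref{number.5}) and the weak* convergence $\mu_j \to \mu$ (\ref{number.3}); it then extends the bound to Borel sets by comparing the two finite Borel measures $\mu$ and $\sqrt{2}\,\calH^1 \circ F$ on dyadic semi-intervals, and finally to arbitrary $S$ by a measurable-hull argument. The factor $1/\sqrt{2}$ enters through $\sqrt{a_1^2+a_2^2} \geq (a_1+a_2)/\sqrt{2}$. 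You instead observe that $\pi_2 \circ F = f$, so the $1$-Lipschitz projection gives $\calH^1(F(S)) \geq \calH^1(f(S))$, and then reduce everything to the classical pushforward identity $f_*\mu = \calL^1|_{[0,1]}$ for the distribution function of a diffuse probability measure; your handling of non-measurable $S$ via open supersets $U \supset f(S)$ and the Borel preimages $f^{-1}(U)$ is clean and replaces the paper's hull argument. All the steps check out: $\mu$ is diffuse, so $f$ is continuous and the $\pi$-system computation $\mu(f^{-1}([0,y]))=y$ is valid, and with the paper's normalization $\calH^1$ on $\R$ is indeed Lebesgue outer measure. Your argument yields the sharper constant $1$ (which is harmless downstream, since \ref{number.9} only uses the qualitative implication $\calH^1(F(E))=0 \Rightarrow \bar{\mu}(E)=0$) and renders the approximation apparatus of \ref{number.3}--\ref{number.5} unnecessary for this lemma, though the rectifiability of $\Gamma$ established in \ref{number.4} is still needed elsewhere.
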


\begin{proof}
We start with the case when $S = [a,b] \subset [0,1]$ is a closed interval. 
Since $F(S) \subset F([0,1])$ is $1$-rectifiable (and compact) the following <<integral geometric inequality>> follows for instance from \cite[3.2.27]{GMT} ($\pi_1$ and $\pi_2$ denote resp. the projection from $\R^2$ onto its first and second axis):
\begin{equation*}
\calH^1(F(S)) \geq \sqrt{a_1^2 + a_2^2}
\end{equation*}
where 
\begin{equation*}
a_1  = \int_\R \rmcard ( F(S) \cap \pi_1^{-1}\{x\} ) d\calL^1(x) 
 = \calL^1(S) 
 = b-a \,,
\end{equation*}
and 
\begin{equation*}
a_2  = \int_\R \rmcard ( F(S) \cap \pi_2^{-1}\{y\} ) d\calL^1(y) 
 = \calL^1(f(S)) 
 = f(b)-f(a) \,.
\end{equation*}
Similarly the other inequality from \cite[3.2.27]{GMT} applies to $F_j(S)$:
\begin{equation*}
a_{1,j} + a_{2,j} \geq \calH^1(F_j(S))
\end{equation*}
where 
\begin{equation*}
a_{1,j}  = \int_\R \rmcard ( F_j(S) \cap \pi_1^{-1}\{x\} ) d\calL^1(x) 
 = \calL^1(S) 
 = b-a \,,
\end{equation*}
and 
\begin{equation*}
a_{2,j}  = \int_\R \rmcard ( F_j(S) \cap \pi_2^{-1}\{y\} ) d\calL^1(y) 
 = \calL^1(f_j(S)) 
 = f_j(b)-f_j(a) \,.
\end{equation*}
Accordingly,
\begin{equation*}
\begin{split}
\calH^1(F(S)) & \geq \sqrt{ (b-a)^2 + (f(b)-f(a))^2 } \\
& = \lim_j \sqrt{ (b-a)^2 + (f_j(b)-f_j(a))^2 } \quad\quad \text{(by \ref{number.4})} \\
& \geq \frac{1}{\sqrt{2}} \lim_j \left( (b-a) + (f_j(b)-f_j(a)) \right) \\
& \geq \frac{1}{\sqrt{2}} \lim_j \left( a_{1,j} + a_{2,j} \right) \\
& \geq \frac{1}{\sqrt{2}} \limsup_j \calH^1(F_j(S)) \\
& \geq \frac{1}{\sqrt{2}} \limsup_j \bc_j \mu_j(S) \qquad\qquad\qquad\qquad \text{(by \ref{number.5})}  \\
& = \frac{\mu(S)}{\sqrt{2}}  \qquad\qquad \text{(according to \ref{number.3} since $\mu(\rmBdry S)=0$)}
\end{split}
\end{equation*}
This completes the proof in case $S$ is a closed interval.
\par 
We now turn to the case when $S \subset [0,1]$ is Borel.
To this end we define $\nu(B) = \sqrt{2}\calH^1(F(B))$, $B \in \calB([0,1])$.
Since $F(B) \in \calB(\R^2)$ whenever $B \in \calB([0,1])$ (because $F$ is continuous, hence Borel measurable, and injective, see \cite[5.4.5]{SRIVASTAVA}) and since $B_1 \cap B_2 = \emptyset$ implies that $F(B_1) \cap F(B_2) = \emptyset$ it follows that $\nu$ is a measure on $\calB([0,1])$.
Thus $\mu$ and $\nu$ are two finite Borel measures on $[0,1]$ such that $\mu(I) \leq \nu(I)$ whenever $I \subset [0,1]$ is a closed interval.
Since $\nu$ is also clearly diffused we infer that $\mu(I) \leq \nu(I)$ whenever $I = (m2^{-n},(m+1)2^{-n}]$, for some $n \in \N^*$ and $m=0,\ldots,2^n-1$. 
Since each relatively open set $U \subset (0,1]$ is the union of a disjointed sequence of such dyadic semi-intervals it follows that $\mu(U) \leq \nu(U)$.
Finally the outer regularity of $\nu$ yields $\mu(B) \leq \nu(B)$ for all $B \in \calB([0,1])$.
\par 
We come to the case when $S \subset [0,1]$ is arbitrary.
We choose a Borel set $B_1 \subset [0,1]$ such that $S \subset B_1$ and $\bar{\mu}(S)=\mu(B_1)$, we choose a Borel set $B_2 \subset \R^2$ such that $F(S) \subset B_2$ and $\calH^1(F(S)) = \calH^1(B_2)$, we let $B_3 = F^{-1}(B_2) \subset [0,1]$ which is Borel as well, and finally we define $B = B_1 \cap B_3$. 
Since $F$ is injective and $F(S) \subset B_2$ we see that $S = F^{-1}(F(S)) \subset F^{-1}(B_2) = B_3$, thus $S \subset B_1 \cap B_3 = B$.
Therefore $\bar{\mu}(S) \leq \bar{\mu}(B) = \mu(B) \leq \mu(B_1) = \bar{\mu}(S)$ and we conclude that $\bar{\mu}(S)=\mu(B)$.
Similarly, from $S \subset B \subset B_3$ and the definition of $B_2$ we infer that $F(S) \subset F(B) \subset F(B_3) \subset B_2$ and in turn $\calH^1(F(S)) \leq \calH^1(F(B)) \leq \calH^1(B_2) = \calH^1(F(S))$ so that $\calH^1(F(S)) = \calH^1(F(B))$.
Finally it follows from the previous paragraph that
\begin{equation*}
\calH^1(F(S)) = \calH^1(F(B)) \geq \frac{\mu(B)}{\sqrt{2}} = \frac{\bar{\mu}(S)}{\sqrt{2}} \,.
\end{equation*}
\end{proof}

\begin{Theorem}[$\mathsf{ZFC} + \sfnon(\calN_{\calL^1}) < \sfcov(\calN_{\calL^1})$]
\label{number.9}
Assume that
\begin{enumerate}
\item $C \subset [0,1]$ is a Cantor set such as in \ref{number.1} and $X = C \times [0,2] \subset \R^2$;
\item $\calA$ is a $\sigma$-algebra of subsets of $X$ such that $\calB(X) \subset \calA \subset \calP(X)$;
\item $\calN \subset \calA$ is a $\sigma$-ideal with the following property: 
\begin{enumerate}
\item $\{x\} \in \calN$ for every $x \in X$;
\item For every $A \in \calA$ and every $1$-rectifiable set $M \subset \R^2$ if $A \cap M \in \calN$ then $\calH^1(A \cap M)=0$;
\end{enumerate}
\item $\sfnon(\calN_{\calL^1}) < \sfcov(\calN_{\calL^1})$.
\end{enumerate}
It follows that $(X,\calA,\calN)$ is not localizable.
\end{Theorem}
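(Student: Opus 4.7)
My plan is to apply Theorem~\ref{abstract.theorem} to $(X,\calA,\calN)$ with $(S,\calB(S),\sigma) = (C,\calB(C),\mu)$, where $\mu$ is the diffuse Borel probability on the compact (hence Polish) set $C$ introduced in \ref{number.2}, and $(T,\calB(T),\tau) = ([0,1],\calB([0,1]),\calL^1|_{\calB([0,1])})$. I set $V_s = \{s\} \times [0,2]$ for $s \in C$ and $H_t = \{(s,\tfrac{1}{2}f(s)+t) : s \in C\}$ for $t \in [0,1]$, i.e.\ the part in $X$ of the vertical translate by $te_2$ of the graph of $\tfrac{1}{2}f$. Both $V_s$ and $H_t$ are compact, hence Borel, so lie in $\calA$ by hypothesis~(2). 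Hypothesis~(1) of \ref{abstract.theorem} holds because $V_s \cap H_t = \{(s,\tfrac{1}{2}f(s)+t)\}$ is a nonempty singleton, which belongs to $\calN$ by hypothesis~(3)(a).

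To verify hypothesis~(2)(a), I would take $Z \in \calA$ and $s \in C$ with $V_s \cap Z \in \calN$. Since $V_s$ is a $1$-rectifiable line segment, hypothesis~(3)(b) gives $\calH^1(V_s \cap Z)=0$, equivalently $\calL^1(A_s)=0$ for $A_s := \{y : (s,y) \in Z\}$. The condition $H_t \cap V_s \cap Z \neq \emptyset$ is exactly $t \in A_s - \tfrac{1}{2}f(s)$, a translate of a Lebesgue-null set, so its outer Lebesgue measure vanishes. For hypothesis~(2)(b), let $Z \in \calA$ and $t \in [0,1]$ with $H_t \cap Z \in \calN$. The set $H_t$ is $1$-rectifiable with $\calH^1(H_t) < \infty$ (it is contained in $\Gamma + te_2$ from \ref{number.4}), so hypothesis~(3)(b) yields $\calH^1(H_t \cap Z)=0$. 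Writing $E := \{s \in C : V_s \cap H_t \cap Z \neq \emptyset\}$, we have $H_t \cap Z = \tilde F(E)$ with $\tilde F(s) := (s,\tfrac{1}{2}f(s)+t)$, and the remaining task reduces to a bound of the form $\bar{\sigma}(E) = \bar\mu(E) \leq C_0 \, \calH^1(\tilde F(E))$.

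To obtain this last inequality I would re-run the proof of Lemma~\ref{number.6} with $F$ replaced by $\tilde F$ and $f_j$ replaced by $\tfrac{1}{2}f_j + t$. On closed intervals $[a,b]$ the integral-geometric inequality yields $\calH^1(\tilde F([a,b])) \geq \tfrac{1}{2}(f(b)-f(a)) = \tfrac{1}{2}\mu([a,b])$ already by projection onto the second coordinate, and the comparison argument from the last two paragraphs of \ref{number.6} (comparing $\mu$ and $2\calH^1 \circ \tilde F$ on dyadic subintervals, then on relatively open sets via a disjoint dyadic decomposition, then on Borel sets by outer regularity, and finally on arbitrary $S$ using Borel hulls together with the injectivity of $\tilde F$) carries over verbatim. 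With hypotheses~(1), (2)(a), (2)(b) of \ref{abstract.theorem} in hand and assumption~(4) supplying $\sfnon(\calN_{\calL^1}) < \sfcov(\calN_{\calL^1})$, that theorem concludes that $(X,\calA,\calN)$ is not localizable. The main obstacle is the adaptation of Lemma~\ref{number.6} to $\tilde F$: it is essentially cosmetic (only the constants change), but one must be careful with the outer-regularity step because $Z$ is not assumed Borel, and with the measurability of $\tilde F(B)$ for Borel $B$, which follows from the injectivity and continuity of $\tilde F$ as in the proof of \ref{number.6}.
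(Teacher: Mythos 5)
Your proposal is correct and follows essentially the same route as the paper: both apply \ref{abstract.theorem} with the vertical segments $V_s$ and the translated graphs of (a multiple of) the distribution function $f$ as the $H_t$, using hypothesis (3) to convert $\calN$-negligibility on these rectifiable sets into $\calH^1$-negligibility, and reducing condition (2)(b) to a lower bound of the form $\bar{\mu}(E) \leq C_0\,\calH^1(\tilde F(E))$ as in \ref{number.6}. The only real divergence is that you obtain the interval case of that bound directly from the $1$-Lipschitz projection $\pi_2$ (since $\pi_2(\tilde F([a,b]))$ is an interval of length $\tfrac12\mu([a,b])$), which legitimately bypasses the approximation by $F_j$, the area formula and the integral-geometric inequality used in the paper's proof of \ref{number.6}, while the extension from intervals to Borel sets and then to arbitrary sets via Borel hulls and the injectivity of $\tilde F$ carries over exactly as you describe.
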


\begin{proof}
In this proof $e_1,e_2$ denotes the canonical basis of $\R^2$ and $\pi_1, \pi_2$ the canonical projections of $\R^2$ on its first and second axis respectively.
The result will be obtained as a consequence of \ref{abstract.theorem} applied to $(X,\calA,\calN)$ as in the statement, $(S,\calB(S),\sigma) = (C,\calB(C),\mu)$, $(T,\calB(T),\tau) = ([0,1],\calB([0,1]), \calL^1)$,
\begin{equation*}
V_s = \{s\} \times [0,2] \in \calB(X) \subset \calA \,,
\end{equation*}
$s \in C$, and
\begin{equation*}
H_t = (\Gamma + t.e_2) \cap X \in \calB(X) \subset \calA \,,
\end{equation*}
$t \in [0,1]$, where $\Gamma = F([0,1])$.
\par 
We now check that condition (1) of \ref{abstract.theorem} is satisfied.
Let $s \in C$ and $t \in [0,1]$.
Since $H_t$ is contained in the graph of a function and $V_s$ is contained in a vertical line, 
$V_s \cap H_t$ is either empty or a singleton, therefore a member of $\calN$ according to our current hypothesis (3)(a). 
It is easy to see that $p_{s,t} = (s,f(s)+t) \in V_s \cap H_t$, so that $V_s \cap H_t \neq \emptyset$.
\par 
We next verify that condition (2)(a) of \ref{abstract.theorem} is satisfied.
Fix $s \in C$ and $Z \in \calA$ such that $V_s \cap Z \in \calN$.
Observe that
\begin{multline*}
[0,2] \cap \{ t : H_t \cap V_s \cap Z \neq \emptyset \} = [0,1] \cap \{ t : p_{s,t} \in V_s \cap Z \} \\ = [0,2] \cap \{ t : t \in \pi_2( V_s \cap Z ) - f(s) \} \,,
\end{multline*}
and therefore
\begin{equation*}
\calL^1( [0,2] \cap \{ t : H_t \cap V_s \cap Z \neq \emptyset \}) \leq \calH^1( V_s \cap Z)=0
\end{equation*}
where the last equality follows from our assumption (3)(b) because $V_s$ is 1 rectifiable.
\par 
Finally we ought to show that condition (2)(b) of \ref{abstract.theorem} is satisfied.
Let $t \in [0,1]$ and $Z \in \calA$ be such that $H_t \cap Z \in \calN$.
Observe that
\begin{equation*}
C \cap \{ s : V_s \cap H_t \cap Z \neq \emptyset \} = C \cap \{ s : p_{s,t} \in H_t \cap Z \} = \pi_1(H_t \cap Z) \,.
\end{equation*}
Since $H_t \cap Z = (\Gamma + t.e_2) \cap Z$ and $\Gamma + t.e_2$ is 1 rectifiable, our hypothesis (3)(b) implies that $\calH^1(H_t \cap Z)=0$.
Abbreviating $E = \pi_1(H_t \cap Z) \subset C$ it ensues from \ref{number.6} that
\begin{equation*}
0 = \calH^1(H_t \cap Z) = \calH^1(F(E)+t.e_2) = \calH^1(F(E)) \geq \frac{\bar{\mu}(E)}{\sqrt{2}}  \,,
\end{equation*}
and the proof is complete.
\end{proof}

\begin{Corollary}
\label{main.result}
Let $C \subset [0,1]$ be a Cantor set as in \ref{number.1} and $X = C \times [0,2]$. 
It follows that $(X,\calA_{\calH^1},\calH^1)$ is undecidably semilocalizable.
\end{Corollary}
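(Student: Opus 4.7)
The strategy is to split the claim into two consistency results: under CH, I would show the space is semilocalizable, and under the consistent strict inequality $\sfnon(\calN_{\calL^1}) < \sfcov(\calN_{\calL^1})$ from \ref{number.7}, I would show it is not. Since $X = C \times [0,2]$ is a compact subset of $\R^2$, hence Polish, and since $\calH^1$ restricted to $X$ is a Borel regular outer measure on $X$, all preceding structural results apply with $d=1$ and this particular $X$.

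For the positive direction, Proposition \ref{CH.implies.ad} applies directly: under CH the measure space $(X,\calA_{\calH^1},\calH^1)$ admits an almost decomposition, and Proposition \ref{ad.implies.semiloc} then yields semilocalizability.

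For the negative direction, I plan to invoke Theorem \ref{number.9} on the measurable space with negligibles $(X,\calA_{\calH^1},\calN_{\calH^1}[\calA^f_{\calH^1}])$. Its non-localizability is, by Definition \ref{def.semilocalizable}, exactly the non-semilocalizability of $(X,\calA_{\calH^1},\calH^1)$. Condition (2) of \ref{number.9} holds since $\calB(X) \subset \calA_{\calH^1}$. Condition (3)(a) holds since singletons in $\R^2$ are Borel and $\calH^1$-null, hence lie in $\calN_{\calH^1} \subset \calN_{\calH^1}[\calA^f_{\calH^1}]$. Condition (4) is the consistent assumption itself. The real check will be condition (3)(b): given $A \in \calA_{\calH^1}$ and a $1$-rectifiable set $M \subset \R^2$ with $A \cap M \in \calN_{\calH^1}[\calA^f_{\calH^1}]$, I must conclude $\calH^1(A \cap M) = 0$. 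I would use that $1$-rectifiability delivers a countable decomposition $M = N \cup \bigcup_n \Gamma_n$ with $\calH^1(N) = 0$ and each $\Gamma_n$ a Borel set of finite $\calH^1$ measure. Each $\Gamma_n \cap X$ then lies in $\calA^f_{\calH^1}$, so the definition of $\calN_{\calH^1}[\calA^f_{\calH^1}]$ forces $\calH^1(A \cap M \cap \Gamma_n) = 0$, and summing gives $\calH^1(A \cap M) = 0$.

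The main obstacle will be the clean verification of (3)(b) together with a brief sanity check that the geometric machinery in Section \ref{example} (in particular the injective parametrization $F$ and the graph shifts $\Gamma + t \cdot e_2$ used to define the sets $H_t$) indeed sits inside $X = C \times [0,2]$; all the substantial work, including the key integral geometric estimate, is already done in Lemma \ref{number.6} and Theorem \ref{number.9}, so the present corollary should be essentially immediate.
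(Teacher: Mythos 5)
Your proof is correct and follows essentially the same route as the paper: consistency of semilocalizability from \ref{CH.implies.ad} (plus \ref{ad.implies.semiloc}), and consistency of non-semilocalizability from \ref{number.9} applied with $\calA = \calA_{\calH^1}$ and $\calN = \calN_{\calH^1}\left[\calA_{\calH^1}^f\right]$. Your explicit verification of hypothesis (3)(b) -- decomposing a $1$-rectifiable $M$ into an $\calH^1$-null set and countably many Borel pieces of finite measure, each of which intersected with $X$ lies in $\calA_{\calH^1}^f$ -- is exactly the check the paper leaves implicit, and it is sound.
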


\begin{proof}
It is consistently semilocalizable according to \ref{CH.implies.ad} and it is consistently not semilocalizable according to \ref{number.9} applied with $\calA = \calA_{\calH^1}$ and $\calN=\calN_{\calH^1}\left[ \calA_{\calH^1}^f\right]$.
\end{proof}

\section{Concluding Remarks and Open Questions}

\begin{Empty}
\label{CR.1}
One may apply \ref{number.9} to other $\sigma$-ideals than $\calN_{\calH^1}$. 
For example let
\begin{equation*}
\calN_{pu} = \calP(\R^2) \cap \{ S : S \text{ is purely $(\calH^1,1)$ unrectifiable} \} \,.
\end{equation*}
Recall that a set $S \subset \R^2$ is called {\em purely $(\calH^1,1)$ unrectifiable} whenever $\calH^1(S \cap M) = 0$ for every 1 rectifiable $M \subset \R^2$.
It then follows from \ref{number.9} that for any $\sigma$-algebra $\calB(X) \subset \calA \subset \calP(X)$ the measurable space with negligibles $(X,\calA,\calA \cap \calN_{pu})$ is consistently not localizable.
\end{Empty}

\begin{Empty}
\label{CR.2}
We turn back to \ref{number.9} applied with $\calN_{\calH^1}$. 
It follows that $(X,\calB(X),\calB(X) \cap \calN_{\calH^1})$ is consistently not semilocalizable.
It further follows in $\mathsf{ZFC}$ from \ref{56} that $(X,\calB(X),\calB(X) \cap \calN_{\calH^1})$ is not almost decomposable.
\begin{enumerate}
\item[(Q1)] {\it I do not know whether, in $\mathsf{ZFC}$, $(X,\calB(X),\calB(X) \cap \calN_{\calH^1})$ is not semilocalizable.}
\end{enumerate}
Notice that, under $\mathsf{CH}$, semilocalizability does not follow from \ref{CH.implies.ad}.
A more general version of this question is the following.
\begin{enumerate}
\item[(Q2)] {\it I do not know whether in \ref{55} the word <<almost decomposable>> might be replaced by the word <<semilocalizable>> without affecting the validity of the statement.}
\end{enumerate}
Notice that \ref{mcshane} does not seem to apply in this situation, for the following reason.
If $(X,\calB(X),\mu)$ is such that $X$ is Polish and $\mu$ is semifinite, then I do not see a reason that the completion of $(X,\calB(X),\mu)$ be locally determined. 
\par 
Another consequence of \ref{number.9} is that there does not exist, in $\mathsf{ZFC}$, a <<localizable version>> of $(X,\calB(X),\calB(X) \cap \calN_{\calH^1})$ obtained by simply <<enlarging>> the given $\sigma$-algebra $\calB(X)$ to another one $\calB(X) \subset \calA \subset \calP(X)$.
Instead it seems necessary to enlarge $X$ first. 
%
\end{Empty}

\begin{Empty}
\label{CR.3}
Here we ask whether the behavior exhibited by the specific set $X$ of section \ref{example} is shared by other compact subsets of $\R^2$ of Hausdorff dimension 1 but non $\sigma$-finite $\calH^1$ measure.
\begin{enumerate}
\item[(Q3)] {\it Let $X \subset \R^2$ be a compact set of Hausdorff dimension 1 and such that $\calH^1(X \cap U) = \infty$ for every open set $U \subset \R^2$ with $X \cap U \neq \emptyset$. Is $(X,\calA_{\calH^1},\calH^1)$ undecidably semilocalizable?}
\end{enumerate}
\end{Empty}

\begin{Empty}
\label{CR.4}
In regard to (Q3), of particular interest would be an example of such $X$ which is purely $(\calH^1,1)$ unrectifiable.
Let us for instance consider the following set $X$, using the notations of \ref{pue.1}.
Choosing $\lambda_k = k.4^{-k}$ one checks that $X$ has Hausdorff dimension 1 and $\calH^1(X \cap U) = \infty$ whenever $U \subset \R^2$ is open and $X \cap U \neq \emptyset$.
Indeed $\calH^d(X) = 0$ when $1 < d$, is a consequence of the definition of Hausdorff measure, and if $S \in \calX_k$ for some $k \in \N^*$ then $\calH^1(X \cap S) = \infty$ according to \cite[2.10.27]{GMT} because $X \cap S = K \times K$ for some $K \subset [0,1]$ with $\calH^\frac{1}{2}(K)=\infty$ according to \cite[2.10.28]{GMT}.
Also observe, as in \ref{pue.1}(1) that $X$ is purely $(\calH^1,1)$ unrectifiable.        
\begin{enumerate}
\item[(Q4)] {\it With the set $X$ described here, is $(X,\calA_{\calH^1},\calH^1)$ undecidably semilocalizable?}
\end{enumerate}
Viewing $X$ as a product as in section \ref{section.pue} does not seem to be immediately helpful since it gives information about a Hausdorff measure essentially of dimension $1/2$.
Trying to use the graphs of distribution functions as in section \ref{example} is no more successful since these graphs are rectifiable and $X$ is purely unrectifiable ; their intersection will always be $\calH^1$ null.
One may also attempt to produce families $V_s$ and $H_t$ needed in \ref{abstract.theorem} as random Cantor subsets of $X$: the $V_s$ using more often a specific set of three subsquares at each generation, and the $H_t$ using more often a distinct specific set of three subsquares at each generation.
However random sets constructed this way tend to intersect too often, making it hard to guarantee condition (2) of \ref{abstract.theorem}.
\end{Empty}

\begin{Empty}
\label{CR.5}
In the notation of section \ref{section.pue} and with the same restriction on $d$ as in the proof of \ref{pue.8},
\begin{enumerate}
\item[(Q5)] {\it I do not know whether the measurable spaces with negligibles $(X_d,\calA_{\calH^d},\calN_{\calH^d})$ and $\left( [0,1],\calA_{\calH^{1/2}}, \calN_{\calH^{1/2}} \right)$ are isomorphic in the category $\sfMSN$.
}
\end{enumerate}
This boils down to deciding whether the $\sigma$-algebra $\calA$ defined in the proof of \ref{pue.8} coincides with the $\sigma$-algebra $\calA_{\calH^d}$.
The fact the answer to this question is not known turned out to be no obstacle thanks to the freedom allowed in \ref{abstract.theorem} regarding the $\sigma$-algebra $\calA$.
\end{Empty}

\begin{Empty}
\label{CR.6}
Our last question here concerns \ref{pue.8}. 
The proof, based on \ref{abstract.theorem} seems to require a product structure that forces the dimension to be $1/2$.
\begin{enumerate}
\item[(Q6)] {\it 
Let $0 < d < 1$ and $d \neq \frac{1}{2}$.
Is the measure space $\left( [0,1], \calA_{\calH^d} , \calH^d \right)$ consistently not semilocalizable?
}
\end{enumerate}
\end{Empty}


\bibliographystyle{amsplain}
\bibliography{/home/thierry/Documents/LaTeX/Bibliography/thdp}


\end{document}